\def\FF{{\mathbb F}}
\def\GG{{\mathbb G}}
\def\NN{{\mathbb N}}
\def\ZZ{{\mathbb Z}}
\def\G{{\mathcal G}}
\def\H{{\mathcal H}}
\def\I{{\mathcal I}}
\def\T{{\mathcal T}}
\def\U{{\mathcal U}}
\def\X{{\mathcal X}}
\def\fA{{\mathfrak A}}
\def\fH{{\mathfrak H}}
\def\fM{{\mathfrak M}}
\def\tF{\tilde{F}}
\def\tR{\tilde{R}}
\def\tS{\tilde{S}}
\def\tT{\tilde{T}}
\def\d{{\rm d}}
\def\m{{\mathfrak m}}
\def\n{{\mathfrak n}}
\def\eps{\varepsilon}
\def\id{{\rm id}}
\def\pr{{\rm pr}}
\def\iterate{{\rm I}}
\def\HD{{\rm HD}}
\def\ID{{\rm ID}}
\def\Der{{\rm Der}}
\def\HCon{{\bf HCon}}
\def\ICon{{\bf ICon}}
\def\IConint{{\bf ICon}_{\bf int}}
\def\Vect{{\bf Vect}}
\def\Mod{{\bf Mod}}
\def\Projmod{\text{\bf Proj-Mod}}
\def\cat#1{{\sf #1}}
\def\Dif{{\hat{\Omega}}}  
\def\difnabla{{}_\Dif\!\nabla}
\def\sdif{{\tilde{\Omega}}} 
\def\forget{\boldsymbol{\omega}}
\def\uob{\boldsymbol{1}}  
\def\dual{*}
\def\sep{{\rm sep}}
\def\gr{{\rm gr}}
\def\LT{{\rm LT}}
\def\Alg#1{$#1$-cga}
\def\Algen#1{$#1$-cgas}
\def\cga{cga}
\def\cgas{cgas}
\def\cgm{cgm}
\def\cgms{cgms}
\def\fe{formally \'etale}
\def\vect#1{\text{\boldmath $#1$\unboldmath}} 
\def\ie{i.\,e.}
\def\gener#1{\langle #1 \rangle}
\def\tgener#1{\ll\! #1 \! \gg}
\def\isom{\cong}
\def\congr{\equiv}
\def\restr#1{|_{#1}}
\def\ideal{\unlhd}
\def\betr#1{\lvert #1\rvert}
\DeclareMathOperator{\Ima}{Im}
\DeclareMathOperator{\Ker}{Ker}
\DeclareMathOperator{\Mor}{Mor}
\DeclareMathOperator{\Hom}{Hom}
\DeclareMathOperator{\Aut}{Aut}
\DeclareMathOperator{\End}{End}
\DeclareMathOperator{\ch}{char}
\DeclareMathOperator{\GL}{GL}
\DeclareMathOperator{\Mat}{Mat}
\DeclareMathOperator{\Gal}{Gal}
\DeclareMathOperator{\Quot}{Quot}
\DeclareMathOperator{\Spec}{Spec}
\def\uGal{\underline{\Gal}}
\def\iHom{\underline{\Hom}}
\def\markdef{\bf }
\def\mathdef{\boldmath }
\theoremstyle{plain}
\newtheorem{thm}{Theorem}[section]
\newtheorem{cor}[thm]{Corollary}
\newtheorem{lem}[thm]{Lemma}
\newtheorem{prop}[thm]{Proposition}
\newtheorem*{tannaka_thm}{Theorem \ref{tannakian_categories}}
\newtheorem*{galcorres_thm}{Theorem \ref{galois_correspondence}}
\theoremstyle{definition}
\newtheorem{defn}[thm]{Definition}
\newtheorem{exmp}[thm]{Example}
\newtheorem{rem}[thm]{Remark}
\newenvironment{notation}{{\bf Notation}\it}
\begin{document}

\title[Iterative connections and nonreduced Galois groups]{Galois
  theory for iterative connections and nonreduced Galois groups} 

\author{Andreas Maurischat}
\address{\rm {\bf Andreas Maurischat (n\'e R\"oscheisen)},
  Interdisciplinary Center for Scientific Computing, Heidelberg 
University, Im Neuenheimer Feld 368, 69120 Heidelberg, Germany }
\email{\sf andreas.maurischat@iwr.uni-heidelberg.de}


\keywords{Galois theory, Differential Galois theory, inseparable
  extensions, connections}

\begin{abstract}
This article presents a theory of {\it modules with iterative
  connection}. This theory is a generalisation of the theory of
  modules with connection in characteristic zero
to modules over rings of arbitrary characteristic. 
We show that these modules with iterative
  connection (and also the modules with {\it integrable} iterative
  connection) form a Tannakian category, assuming some nice properties
  for the underlying ring, and we show how this generalises to
  modules over schemes. 
We also relate these notions to 
stratifications on modules, as introduced by A.~Grothendieck
(cf. [BO78]) in order to extend integrable (ordinary) connections to
finite characteristic.
Over smooth rings, we obtain an  equivalence of stratifications and
integrable iterative connections. 
Furthermore, over a regular ring in positive
characteristic, we show that the
category of modules with integrable iterative connection is also
equivalent to the category of flat bundles as defined by D.~Gieseker
in [Gie75].

In the second part of this article, we set up a Picard-Vessiot theory
  for fields of solutions. For such a Picard-Vessiot extension, we
  obtain a Galois correspondence, which takes into account even
  nonreduced closed subgroup
 schemes  of the Galois group scheme on one hand
  and inseparable intermediate extensions of the Picard-Vessiot
  extension on the other hand. Finally, we compare our Galois
  theory with the Galois theory for purely inseparable field 
extensions given by S.~Chase in [Cha76].
\end{abstract}

\maketitle

\section{Introduction}

\noindent For characteristic zero, N. Katz described in \cite{katz} a
general setting of 
modules with connection to describe partial linear differential  
equations, and established a Galois theory from an
abstract point of view: He showed that -- under some assumptions on
the ring -- the category of modules with
 connection (and also that of modules with integrable connection)
 forms a neutral Tannakian category over the 
field of constants and neutral Tannakian categories are known to be
equivalent to categories of finite dimensional representations of
proalgebraic groups (see \cite{del_milne}). However, 
this theory works only in characteristic zero. 
This is mainly caused by the fact that in positive characteristic $p$, every
$p$-th power of an element in a ring is differentially constant.
A.~Grothendieck gave a notion of stratifications (cf. \cite{bert_ogus})
which generalises the notion of integrable connections to arbitrary
characteristic, and which turns out to be a ``good'' category. In
positive characteristic, 
a theorem of Katz (see \cite{gieseker}) shows
that over smooth schemes, modules with
stratifications are equivalent to flat bundles (or F-divided sheaves
as they are called in \cite{santos}), which enables Gieseker and
Dos Santos to obtain further properties of the fundamental group
scheme resp. the Tannakian group scheme.

In the first part of this article, we set up  a theory over rings of
arbitrary characteristic, which generalises 
the characteristic zero setting
not only in the integrable case, but also in the non-integrable case,
using so called iterative connections.
The integrable version, however, (so called modules with integrable
iterative connection) is again equivalent to flat bundles over a
regular ring in positive characteristic (cf. Section
\ref{proj_system}).

For obtaining this theory, differentials will be replaced by a
family of {\it higher differentials},
similar to the step from derivations to higher/iterative
derivations in positive characteristic (see for example \cite{mat_hart} and
  \cite{mat_put}).
In getting the right setting, the main idea is the following: For an
algebra $R$ over a perfect field $K$, regard an iterative derivation
on $R$ over $K$ (or more generally, a higher
derivation) not as a sequence of $K$-linear maps 
$\left(\partial^{(k)}:R\to R\right)_{k\in\NN}$ (as it is done in
\cite{hasse_schmidt}, \cite{mat_hart} etc.) but as a homomorphism
of $K$-algebras $\psi:R\to R[[T]]$ by summing up, in detail $\psi(r):=
\sum_{k=0}^\infty \partial^{(k)}(r)T^k$ ($\psi$ is often called the
Taylor series), and moreover regard the ring of power series $R[[T]]$ as a
completion of the graded $R$-algebra $R[T]$. This leads to the notion of
``\cgas'' 
(completions of graded algebras; cf. Section \ref{general_notation}),
which allows to
generalise the definition of a higher derivation and to obtain a
universal object $\Dif_{R/K}$ with a universal higher derivation
$\d_R:R\to \Dif_{R/K}$, replacing the module of differentials
$\Omega_{R/K}$ used in the classical theory (cf. Theorem \ref{diff_exists}).


In Section \ref{highder_modules}, we introduce the definition of a higher
connection on an 
$R$-module. Furthermore, we show that a finitely generated $R$-module
that admits such a higher connection is locally free, if $R$ is
regular and a finitely generated $K$-algebra (Corollary
\ref{automatically_projective}). At least in positive characteristic,
this is an improvement to the literature, since no integrability
condition is needed.
Although modules with higher connection might be interesting on
their own, our main concern are modules with so called iterative
connection and modules with integrable iterative connection
(cf. Section \ref{iterative_theory}), which are obtained by requiring
additional properties on the higher connection. One of the main results of
the first part is given in Section \ref{categorial}, namely

\begin{tannaka_thm}
Let $R$ be a regular ring over a perfect field $K$ and the localisation of a
finitely generated $K$-algebra, such that $\Spec(R)$ has a $K$-rational point.
Then the categories $\HCon(R/K)$, $\ICon(R/K)$ and $\IConint(R/K)$ of
$R$-modules with higher connection 
resp. iterative connection resp. integrable iterative connection are
neutral Tannakian categories over $K$.
\end{tannaka_thm}

The reason for considering iterative and integrable iterative
connections becomes clear in the next two sections. In Section
\ref{char_zero}, we have a look at characteristic zero. Here we show
that iterative connections on modules are in one-to-one correspondence
to ordinary connections, if $R$ is regular, and that the integrability
 conditions coincide via this correspondence. Hence the theory of
 modules with 
 (integrable) iterative connection really is a generalisation of
 modules with (integrable) connection in characteristic zero.

Section \ref{proj_system} is dedicated to the case of positive
characteristic. The main result here is the equivalence between
 the category $\IConint(R/K)$
and the
category of Frobenius compatible projective systems (Fc-projective
systems) over the ring $R$. (Again under the assumption that $R$ is
regular.) Essentially,
 Fc-projective systems over $R$ can be identified with flat
bundles over $\Spec(R)$
resp. F-divided sheaves on $\Spec(R)$.
Using the equivalence above, we can deduce from Corollary
\ref{automatically_projective} that for an Fc-projective system
$\{M_i\}_{i\in\NN}$, the $R$-module $M_0$ is locally free. This is a
slight improvement of \cite{santos}, Lemma 6, where the underlying
field $K$ is supposed to be algebraically closed. 

As mentioned in the beginning, stratifications on modules as
introduced by A.~Grothendieck
(cf. \cite{bert_ogus}) also generalise the notion of integrable (ordinary)
connections. At least if $R$ is
smooth over $K$ and $K$ is algebraically closed, in characteristic
zero as well as in positive characteristic, we can deduce from our results
 that the category of stratified modules and the category
$\IConint(R/K)$
are equivalent, using the equivalence between stratifications and
integrable connections on modules in characteristic zero
(cf. \cite{bert_ogus}, Thm. 2.15) respectively a theorem of Katz on the
equivalence of stratified modules and flat bundles in positive
characteristic (cf. \cite{gieseker}, Thm. 1.3). However, there is no
obvious direct correspondence between stratifications and integrable
iterative connections, and it is an open question whether there is any
correspondence at all, if $R$ is not smooth.

We conclude the first part of this paper by outlining a generalisation
of modules with higher 
connection to sheaves of modules with higher connection
(resp. (integrable) iterative 
connection) over schemes in Section \ref{diff_schemes}.

\medskip

In the second part (Sections \ref{pv-theory}, \ref{galois_corres} and
\ref{finite_inseparable}), we consider solution rings and solution
fields for modules with iterative connection, which we call pseudo
Picard-Vessiot rings (PPV-rings) resp. pseudo
Picard-Vessiot fields (PPV-fields), following the notion of classical
differential Galois theory. Indeed, the Picard-Vessiot theory given
here is set up in a more general context (namely for so called {\em
  $\theta$-fields}), so that it can be applied not only to modules
with iterative connection, but also to the iterative differential
modules as in \cite{mat_hart} and \cite{mat_put}. Given such a
PPV-ring $R$ over a $\theta$-field $F$, we obtain a Galois group
scheme $\G:=\uGal(R/F)$ defined over the constants $C_F$ of $F$
(cf. Prop. \ref{aut_is_affine}), and we show that $\Spec(R)$ is a $(\G
\times_{C_F} F)$-torsor (cf. Cor. \ref{spec_is_torsor}).
The main theorem of this part is the Galois correspondence, namely

\begin{galcorres_thm}{\bf (Galois correspondence)}\\
Let $R$ be a PPV-ring over some $\theta$-field $F$, $E:=\Quot(R)$ the
quotient field of $R$ and $\G:=\uGal(R/F)$ the Galois group
scheme of $R/F$.
\begin{enumerate}
\item There is an antiisomorphism of the lattices
$$\fH:=\{ \H \mid \H\leq\G \text{ closed subgroup schemes of }\G
\}$$
and
$$\fM:=\{ M \mid F\leq M\leq E \text{ intermediate $\theta$-fields} \}$$
given by 
$\Psi:\fH \to \fM,\H\mapsto E^{\H}$ and 
$\Phi:\fM \to \fH, M\mapsto \underline{\Gal}(RM /M)$.
\item If $\H\leq \G$ is normal, then
  $E^{\H}=\Quot(R^{\H})$ and 
  $R^{\H}$ is a PPV-ring over $F$ with Galois group scheme 
$\underline{\Gal}(R^{\H}/F)\isom \G/\H$.
\item If $M\in\fM$ is stable under the action of $\G$, then $\H:=\Phi(M)$
 is a normal subgroup scheme of $\G$, $M$ is a PPV-extension of $F$ and
$\underline{\Gal}(R\cap M /F)\isom \G/\H$.
\item For $\H\in \fH$, the extension $E/E^{\H}$ is separable if and
  only if $\H$ is reduced.
\end{enumerate}
\end{galcorres_thm}
\noindent Here, $R^{\H}$ resp. $E^{\H}$ denote functorial invariants of $R$
resp. $E$ under the action of the group functor $\H$ (cf. Section
\ref{galois_corres}).

Contrary to the Galois correspondence given by Matzat and van der Put
in \cite{mat_put} in the iterative differential case, our
correspondence takes into account not only reduced subgroup schemes
and intermediate iterative differential fields over which $E$ is
separable\footnote{This separability condition is missing in \cite{mat_put},
but has been added for example in \cite{amano} and \cite{heiderich}.},
but even the nonreduced subgroup schemes and those intermediate fields
over which $E$ is inseparable. By part iv) of the theorem, in this
general setting also the separability condition and the
reducedness condition correspond to each other.
Our Galois correspondence is quite similar to a Galois correspondence
given by M. Takeuchi for so called C-ferential fields between
intermediate C-ferential fields and closed subgroup schemes, although he
uses a different definition of PV-extension. The relation to this
correspondence is discussed in Remark \ref{rel_to_takeuchi}.
We conclude Section \ref{galois_corres} by some examples to enlight
our Galois correspondence (cf. Example \ref{exmp_group_schemes}).

In the last section, the Galois theory given here is compared with the
Galois theory for purely inseparable field
extensions given by S. Chase in \cite{chase}, who extended the theory of
N. Jacobson (cf. \cite{jacobson}) to Galois group schemes of arbitrary
exponents.

\section{Notation}\label{general_notation}

Throughout this article, $K$ denotes a perfect field, $R$ and
$\tilde{R}$ denote integral domains, which are finitely generated
$K$-algebras (or localisations of finitely generated $K$-algebras) and
$f:R\to \tilde{R}$ denotes a homomorphism of $K$-algebras. $M$ will
be a finitely generated $R$-module.

As mentioned in the introduction, we need the notion of ``completions
of graded algebras''. So let $\bigoplus_{i=0}^\infty B_i$ be a
graded $R$-algebra. Then the ideals
$I_k:=\bigoplus_{i=k}^\infty B_i$ form a filtration of the
algebra and one obtains a completion of $\bigoplus_{i=0}^\infty
B_i$ with respect to this filtration (cf. \cite{eisenbud}, Ch. 7.1). As an
$R$-module, this completion is isomorphic to $\prod\limits_{i=0}^\infty
B_i$.

\begin{defn}{\bf (\cgas)}
A commutative $R$-algebra $B$ is called a {\markdef completion of a
  graded algebra}, or a {\markdef
  \cga } for short, if $B$ is the completion of a graded
  $R$-algebra $\bigoplus_{i=0}^\infty B_i$ in the above sense.
We call $B_i$ the {\markdef $i$-th homogeneous component} of $B$. 
$B$ is called a {\markdef \Alg{\tR}}, if $B$ is a \cga{} with $B_0=\tR$.
The augmentation map will be denoted by $\eps:B\to B_0=\tR$. More generally,
  the projection map to the $i$-th homogeneous component will be
  denoted by $\pr_i:B\to B_i$.
\end{defn}

\begin{exmp} 
\begin{enumerate}
\item The ring of formal power series $R[[T]]$ is an \Alg{R}, with
$i$-th homogeneous component $R\cdot T^i$.
\item The ring $\tR$ is a \cga{} with $(\tR)_0=\tR$ and $(\tR)_i=0$
  for $i>0$. In particular,
the ring $R$ itself is the trivial \Alg{R} with $(R)_i=0$ for $i>0$.
\end{enumerate}
\end{exmp}

\begin{rem}
\begin{enumerate}
\item Similar to the notation of a power series as an infinite sum,
  elements of a \cga{} $B$ are denoted by $\sum_{i=0}^\infty
b_i$, where $b_i\in B_i$. This notation is also justified by the fact
that indeed, $\sum_{i=0}^\infty b_i$ is the limit of the sequence of
 partial sums $(\sum_{i=0}^n b_i)_{n\in\NN}$ in the given topology, or
in other words, $\sum_{i=0}^\infty b_i$ is a convergent series.
\item Since $\bigoplus_{k=0}^\infty B_k$ is dense in $B$, the continuous
extension of a given homomorphism of graded $R$-algebras is unique. By
a {\markdef homomorphism of \cgas}, we will always mean a homomorphism
that is induced by a graded homomorphism of the underlying graded algebras.
\item For two \cgas{} $B$ and $\tilde{B}$, we define the {\markdef
  tensor product} $B\otimes \tilde{B}$, namely the \cga{} with
  homogeneous components $(B\otimes \tilde{B})_k:=\sum_{i+j=k}
  B_i\otimes_R  \tilde{B}_j$.
\end{enumerate}
\end{rem}

We sometimes have to consider homomorphisms
between \cgas{} that aren't induced by homomorphisms of graded
algebras. So let  
$B$ and $\tilde{B}$ be \cgas{} and let $g:B\to \tilde{B}$ be
a continuous homomorphism of $K$-modules (or even $K$-algebras).
Then we define ``homogeneous components'' $g^{(i)}:B\to \tilde{B}$
($i\in\ZZ$) of $g$ to be the continuous homomorphisms of $K$-modules
given by
$$g^{(i)}\restr{B_j}:=\pr_{i+j}\circ g\restr{B_j}: B_j\to
\tilde{B}_{i+j}$$
for all $j\in\NN$ (set $\tilde{B}_{i+j}:=0$ for
$i+j<0$).
The $g^{(i)}$ uniquely determine $g$, because for all $b_j\in B_j$,
$\sum\limits_{i=-j}^\infty g^{(i)}(b_j)$ converges to $g(b_j)$.

Such a continuous homomorphism of $K$-modules $g:B\to \tilde{B}$ is
called {\markdef positive}, if $g^{(i)}=0$ for $i<0$, and we denote by
$\Hom_K^+(B,\tilde{B})$ the set of positive continuous homomorphisms
of $K$-modules from $B$ to $\tilde{B}$.
A short calculation shows that for \cgas{} $B$ and $\tilde{B}$,
a continuous homomorphism $g:B\to \tilde{B}$ is a
homomorphism of $K$-algebras if and only if the maps $g^{(k)}$ satisfy the
property
$$\forall\, k\in\NN, \forall\, r,s\in B:\quad g^{(k)}(rs) = \sum_{i+j=k}
g^{(i)}(r) g^{(j)}(s).$$
Furthermore, the monoid $(K,\cdot)$ acts on the set
$\Hom_K^+(B,\tilde{B})$ of 
positive continuous homomorphisms of $K$-modules by
$$(a.g)^{(i)}:=a^i\cdot g^{(i)}\quad (i\geq 0)$$
for all $a\in K$, $g\in \Hom_K^+(B,\tilde{B})$.
If $g$ is a homomorphism of algebras, then $a.g$ also is a
homomorphism of algebras.
Moreover, for $g\in \Hom_K^+(B,\tilde{B})$, $h\in
\Hom_K^+(\tilde{B},\tilde{\tilde{B}})$ and $a\in K$, we have
$$a.(h\circ g)=a.h \circ a.g,$$
\ie{} the action of $K$ commutes with composition.

\begin{defn}
For a \cga{} $B$, a {\markdef \cgm} over $B$ is the completion of a
graded module over the graded algebra $\bigoplus_{k=0}^\infty B_k$,
the completion taken by the topology induced from the grading.
In the same manner as for \cgas, we define homogeneous components of
\cgms, continuous homomorphisms between \cgms{} and homogeneous
components of those. There also
is an action of the monoid
$(K,\cdot)$ on the set of positive continuous homomorphisms between
two given \cgms. 
\end{defn}

\begin{rem}
Some special maps, that we will use here are
the higher derivations on rings and modules (cf. Sections \ref{highder} and
\ref{highder_modules}) -- maps in $\Hom_K^+(R,B)$
resp. $\Hom_K^+(M,B\otimes_R M)$ --, the extension $\d_\Dif$ of
the universal derivation to the algebra of higher differentials -- a
map in $\Hom_K^+(\Dif,\Dif)$ (cf. Section \ref{highder}) --, the
extensions of iterable higher derivations (cf. Section \ref{pv-theory})
as well as the extensions of higher connections on $M$ to maps in 
$\Hom_K^+(\Dif\otimes_R M,\Dif\otimes_R M)$ (cf. Section
\ref{highder_modules}) and the extensions of iterable higher
derivations on $M$ (cf. Section \ref{pv-theory}).
\end{rem}

\section{Higher Derivations and Higher Differentials}\label{highder}
In this section we explain the notion of higher derivations on rings and
modules. The definition used here is different from that introduced by
Hasse and Schmidt in \cite{hasse_schmidt}. In fact, it is a
generalisation, as we will show later on. This more general definition
is necessary to define the algebra of higher differentials as a
universal object. 

\begin{defn}
Let $B$ be a \Alg{\tR}. (As mentioned earlier $R$, $\tR$ denote integral
domains over $K$ together with a homomorphism of $K$-algebras $f:R\to
\tR$.)
A {\markdef higher derivation} of
$R$ to $B$ over $K$ is a homomorphism of $K$-algebras $\psi:R\to B$
satisfying $\eps \circ \psi=f:R\to B_0=\tR$.
The set of all higher derivations of $R$ to $B$ over $K$ will be
denoted by $\HD_K(R,B)$. In the special case of $B=R[[T]]$
(and $\tilde{R}=R$, $f=\id_R$) we set 
  $\HD_K(R):=\HD_K(R,R[[T]])$. 
\end{defn}

\begin{rem}\label{highder_formel}
\begin{enumerate}
\item Since a higher derivation $\psi\in\HD_K(R,B)$ can be regarded as a
positive continuous homomorphism from the \cga{} $R$ to  the \cga{}
$B$, the ``homogeneous
components'' of $\psi$ are denoted by $\psi^{(k)}:R\to B_k$ and for
every $r\in R$, we then have 
$\psi(r)=\sum_{k=0}^{\infty} \psi^{(k)}(r)$. (The right hand side is a
series that converges in the topology of $B$.)
\item Let $\psi\in \HD_K(R)$. Then since $\eps
  \circ \psi=\id_R$, the maps 
  $\psi^{(k)}:R\to R\cdot T^k\isom R$ are homomorphisms of 
$K$-modules and satisfy the following properties:
\begin{eqnarray}
\psi^{(0)} &=& \id_R \\
\forall\, k\in\NN, \forall\, r,s\in R:\quad \psi^{(k)}(rs) &=& \sum_{i+j=k}
\psi^{(i)}(r) \psi^{(j)}(s)
\end{eqnarray}
Furthermore, any sequence $\left(\partial^{(k)}\right)_{k\in\NN}$ of
$K$-module-homomorphisms
$\partial^{(k)}:R\to R$ satisfying
these two properties defines a  
higher derivation $\psi:R\to R[[T]]$ via $\psi(r):=\sum_{k=0}^\infty
\partial^{(k)}(r)T^r$.
\item As mentioned in the beginning, Hasse and
Schmidt defined a higher derivation to be a sequence
$\left(\partial^{(k)}\right)_{k\in\NN}$ as above. Hence our definition of a
higher derivation $\psi\in \HD_K(R)$ is equivalent to that of Hasse
and Schmidt. 
\end{enumerate}
\end{rem}

\begin{exmp}\label{phi_t_j}\rm
\begin{enumerate}
\item If the characteristic of $K$ is zero, then any $K$-derivation
  $\partial:R\to R$ (in the classical sense) gives rise to a higher
  derivation $\phi_\partial \in \HD_K(R)$ by 
$$\phi_\partial(r):=\sum_{k=0}^\infty \frac{1}{k!}\partial^k(r) T^k$$
(see also Section \ref{char_zero}).
\item For a polynomial algebra $R=K[t_1,\dots, t_m]$, every higher
derivation of $R$ into some \Alg{R} $B$ is given by an $m$-tuple
$(b_1,\dots,b_m)$ of elements of $B$ satisfying $\eps(b_j)=t_j$ for
all $j=1,\dots,m$.\\
The higher derivations
$\phi_{t_j}\in\HD_K(K[t_1,\dots,t_m])$ given by $\phi_{t_j}(t_i)=t_i$ for
$i\ne j$ and $\phi_{t_j}(t_j)=t_j+T$ play an important role. In the
classical context, $\phi_{t_j}^{(1)}$ is just the partial derivation with
respect to $t_j$. We therefore call $\phi_{t_j}$ the {\markdef higher
derivation with respect to $t_j$}. If $\tR$ is formally \'etale over
$K[t_1,\dots, t_m]$ (see Def. \ref{formally_etale} and Example
\ref{formally_etale_ex}), then the
$\phi_{t_j}\in \HD_K(K[t_1,\dots, t_m])$ uniquely extend to higher
derivations on $\tR$ by Proposition \ref{unique_extension}. These
derivations will also be referred to as 
{\markdef higher derivation with respect to $t_j$} and will also be denoted
by $\phi_{t_j}$.
\end{enumerate}
\end{exmp}

\begin{defn}\label{formally_etale} (cf. \cite{ega}, Def. 19.10.2)\\
Let $S, \tS$ be rings and $g:S\to \tS$ a homomorphism of rings. $\tS$
is called {\markdef formally \'etale} over $S$ if, for each surjective
homomorphism of rings $h:T\to \tT$ with nilpotent kernel, and all
homomorphisms $v:S\to T$ and $\tilde{v}:\tS \to \tT$ satisfying
$\tilde{v}\circ g=h\circ v$, there exists a unique
homomorphism $u:\tS \to T$ such that $u\circ g=v$ and $h\circ
u=\tilde{v}$, \ie{}, one obtains the following commutative diagram:

\centerline{\xymatrix{ 
\tS \ar[r]^{\tilde{v}} \ar@{-->}[dr]^{u}& \tT \\
S \ar[u]^{g} \ar[r]^v & T \ar[u]_h 
}}
\end{defn}

\begin{exmp}\label{formally_etale_ex}
\begin{enumerate}
\item As it is shown in \cite{ega}, Example 19.10.3(ii), localisations
  of a ring $S$
are formally \'etale over $S$.
\item  Every
  finite separable extension of a field $F$ is formally \'etale over $F$
  (cf. \cite{ega}, Ex. 19.10.3(iii)).
\end{enumerate}
\end{exmp}

A more general example is the following:
\begin{prop}
Let $S$ be a ring, let $\tS=S(y)$ be an
  extension of $S$ with minimal polynomial $m(X)\in S[X]$ of 
  $y$ and such that $m'(y)$ is invertible in $\tilde{S}$, where
$m'(X):=\frac{d}{dX}m(X)$. Then $\tS$ is formally \'etale over $S$.
\end{prop}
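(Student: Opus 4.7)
The plan is to reduce the formally \'etale property to a Hensel-type lifting problem for the polynomial $m(X)$. Since $\tS = S(y)$ with minimal polynomial $m(X)$, I would identify $\tS$ with $S[X]/(m(X))$; then specifying a ring homomorphism $u: \tS \to T$ satisfying $u \circ g = v$ amounts to choosing $t := u(y) \in T$ subject to the relation $m_v(t) = 0$, where $m_v \in T[X]$ denotes the polynomial obtained by applying $v$ coefficientwise to $m$. The remaining compatibility $h \circ u = \tilde{v}$ translates to $h(t) = \tilde{v}(y)$. Hence the claim reduces to showing that there is a unique $t \in T$ with $h(t) = \tilde{v}(y)$ and $m_v(t) = 0$.

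For existence, I would pick an arbitrary lift $t_0 \in T$ of $\tilde{v}(y)$ (possible by surjectivity of $h$) and set $N := \Ker(h)$, which is nilpotent by hypothesis. Two observations are key: first, $h(m_v(t_0)) = \tilde{v}(m(y)) = 0$, so $m_v(t_0) \in N$; second, $h(m_v'(t_0)) = \tilde{v}(m'(y))$ is a unit in $\tT$ by assumption, and since an element of $T$ whose reduction modulo the nilpotent ideal $N$ is a unit must itself be a unit (a unit plus a nilpotent is a unit), $m_v'(t_0)$ is invertible in $T$. Using the polynomial Taylor expansion $m(X + Y) = m(X) + m'(X) Y + Y^2 P(X, Y)$ for some $P \in S[X, Y]$, I look for $n \in N$ with $m_v(t_0 + n) = 0$, i.e.\ I try to solve the fixed-point equation
\[ n \;=\; -\,m_v'(t_0)^{-1}\bigl(m_v(t_0) + n^2 P_v(t_0, n)\bigr). \]
Starting from $n_0 = 0$ and iterating, a straightforward bookkeeping shows that the successive differences $n_{i+1} - n_i$ land in progressively higher powers $N^{i+1}$; since $N$ is nilpotent, the iteration terminates and produces the required $t := t_0 + n$.

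For uniqueness, if $t, t' \in T$ both fulfil the two conditions, then $\delta := t - t' \in N$ and the same Taylor expansion yields
\[ 0 \;=\; m_v(t' + \delta) - m_v(t') \;=\; \delta\cdot \bigl(m_v'(t') + \delta\cdot P_v(t', \delta)\bigr). \]
The reduction argument applied to $t'$ in place of $t_0$ shows that $m_v'(t')$ is a unit of $T$, while $\delta\cdot P_v(t', \delta)$ lies in $N$ and is therefore nilpotent; so the bracketed factor is a unit, forcing $\delta = 0$. The main delicate point in the proof will be the bookkeeping in the Newton-style iteration verifying that the increments indeed descend into higher and higher powers of $N$; the rest of the argument is formal manipulation of the Taylor identity plus the standard ``unit modulo nilpotent is a unit'' principle.
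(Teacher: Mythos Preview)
Your proposal is correct and follows essentially the same Hensel/Newton lifting argument as the paper: both reduce formal \'etaleness to the existence and uniqueness of a root of $m_v$ lifting $\tilde{v}(y)$, using that $m'(y)$ being a unit forces $m_v'(t_0)$ to be a unit modulo the nilpotent kernel, and then iterate a Taylor-based correction until the sequence stabilises. The only cosmetic differences are that the paper uses the full Newton step $z_{k+1}=z_k-m'(z_k)^{-1}m(z_k)$ and proves uniqueness inductively modulo $I^k$, whereas you use the simplified fixed-point iteration with the derivative frozen at $t_0$ and prove uniqueness directly via the factorisation $0=\delta\cdot(\text{unit})$; both variants are standard and equally valid in the nilpotent setting.
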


\begin{proof}
Let $h:T\to \tT$ be a surjective homomorphism with nilpotent kernel $I:=\Ker(h)$
and let $v:S\to T$ and $\tilde{v}:\tS \to \tT$ be as in Definition
\ref{formally_etale}. Since every lift $u$ of $v$ is given by the
image of $y$ in $T$, we have to show that there exists a unique
element $z\in T$ with $h(z)=v(y)=:\tilde{z}$ and $m(z)=0$. (By abuse
of notation, we also write $m(X)$ for the polynomial $v(m)(X)\in
T[X]$ and also for the polynomial $h(v(m))(X)\in \tT[X]$.)
We will show by induction that for each $k\geq 1$, there exists a
$z_k\in h^{-1}(\tilde{z})$ with $m(z_k)\in I^k$, and that $z_k$ is
unique modulo $I^k$ with this property. Since $I$ is nilpotent, this
proves the claim by choosing $k$ sufficiently large.

For $k=1$, any preimage $z_1$ of $\tilde{z}$ works, since
$I=\Ker(h)$. Now assume for given $k\geq 1$, there is a $z_k\in T$
satisfying $h(z_k)=\tilde{z}$ and $m(z_k)\in I^k$, which is unique
modulo $I^k$. Since $m'(y)$ is invertible in $\tS$, we have
$m'(\tilde{z})\in \tT^\times$ and so $m'(z_k)\in T^\times$, by
\cite{matsumura}, Ex. 1.1.

Using Taylor expansion, for  $\zeta\in I^k$, we have
$m(z_k+\zeta)\congr m(z_k)+m'(z_k)\zeta \mod{I^{k+1}}$.
Therefore $m(z_k+\zeta)\in I^{k+1}$ if and only if 
$\zeta\congr -m'(z_k)^{-1} m(z_k) \mod{I^{k+1}}$.
Since by hypothesis, $m(z_k)\in I^k$ and hence $-m'(z_k)^{-1}
m(z_k)\in I^k$, the element $z_{k+1}:=z_k-m'(z_k)^{-1} m(z_k)\in
T$ satisfies $h(z_{k+1})=\tilde{z}$ and $m(z_{k+1})\in I^{k+1}$, and $z_{k+1}$
is unique modulo $I^{k+1}$ with these properties, since $z_k$ was
unique modulo $I^k$.
\end{proof}

We return to higher derivations (again using the notation given at the
beginning of Section \ref{general_notation}).

\begin{prop}\label{unique_extension}
If $\tR$ is formally \'etale over $R$, then every higher derivation
$\psi\in\HD_K(R,B)$ to a \Alg{\tilde{R}} $B$ can be uniquely extended
to a higher derivation $\psi_e\in \HD_K(\tilde{R},B)$.
\end{prop}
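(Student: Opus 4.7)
The plan is to build $\psi_e$ by successive approximation modulo the standard filtration $I_k := \prod_{i\geq k} B_i$ of $B$, exploiting the formally \'etale hypothesis at each finite stage. The key observation is that, for every $N\geq 2$, the projection $\pi_N\colon B/I_N \to B/I_{N-1}$ is a surjection of $K$-algebras whose kernel $I_{N-1}/I_N$ is square-zero in $B/I_N$, since $I_{N-1}\cdot I_{N-1}\subseteq I_{2(N-1)}\subseteq I_N$. Hence each $\pi_N$ is a surjection with nilpotent kernel, precisely the setting in which Definition \ref{formally_etale} applies.

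I then construct a compatible family of $K$-algebra homomorphisms $\psi_{e,N}\colon\tR\to B/I_N$ with $\psi_{e,N}\circ f \equiv \psi \pmod{I_N}$ by induction on $N$. For $N=1$, $B/I_1 = B_0 = \tR$, and I take $\psi_{e,1}:=\id_{\tR}$; the required identity reduces to $\eps\circ\psi = f$, which holds by assumption on $\psi$. Given $\psi_{e,N-1}$, apply the formally \'etale property with $T = B/I_N$, $\tT = B/I_{N-1}$, $h = \pi_N$, $v := \psi \bmod I_N$, and $\tilde v := \psi_{e,N-1}$. The required compatibility $h\circ v = \tilde v\circ f$ holds by the inductive hypothesis, so there exists a unique ring homomorphism $u=:\psi_{e,N}\colon\tR\to B/I_N$ satisfying $\psi_{e,N}\circ f = v$ and $\pi_N\circ \psi_{e,N} = \psi_{e,N-1}$; this $u$ is automatically $K$-linear because $v$ is.

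Passing to the inverse limit of the $\psi_{e,N}$ yields a $K$-algebra homomorphism $\psi_e\colon\tR\to B = \varprojlim_N B/I_N$ satisfying $\eps\circ\psi_e = \psi_{e,1} = \id_{\tR}$ and $\psi_e\circ f = \psi$, so $\psi_e\in\HD_K(\tR,B)$ is an extension of $\psi$ as required. For uniqueness, any other such extension $\psi_e'$ reduces modulo $I_N$ to a $K$-algebra map meeting the same two conditions as $\psi_{e,N}$; the uniqueness clause in the formally \'etale property forces $\psi_e'\equiv\psi_e\pmod{I_N}$ for every $N$, and hence $\psi_e' = \psi_e$. The only substantive verification in this whole argument is the square-zero nilpotency of $I_{N-1}/I_N$ in $B/I_N$, which is immediate from the grading; the rest is the formal machinery of approximation by nilpotent-kernel extensions.
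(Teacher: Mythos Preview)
Your argument is correct and is essentially the approach the paper has in mind: the paper does not spell out a proof but refers to Matsumura's Theorem~27.2 for the case $B=\tR[[T]]$, and your successive-approximation argument modulo the filtration $I_N$ is precisely the natural adaptation of that proof to a general \Alg{\tR}. The only point worth making explicit (which you do) is that $I_{N-1}^2\subseteq I_N$ so each $\pi_N$ has square-zero kernel, which is what makes the formally \'etale lifting apply at every stage.
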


The proof is almost identical to H. Matsumura's proof for the case
$B=\tR[[T]]$, so we will omit it here. (See \cite{matsumura},
Thm. 27.2; \fe{} is called $0$-etale there).

\begin{defn}\label{komp}
For $\psi\in \HD_K(R)$
we define a continuous endomorphism $\psi[[T]]$ on $R[[T]]$ by
$\psi[[T]](\sum_{i=0}^\infty a_i T^i):=\sum_{i=0}^\infty
\psi(a_i)T^i$.
(In fact, $\psi[[T]]$ is an automorphism.)
Using this we get a {\markdef multiplication} on $\HD_K(R)$ by
\begin{equation}\label{mult_formel}\psi_1\cdot
  \psi_2:=\psi_1[[T]]\circ \psi_2\in \HD_K(R)\end{equation} 
for $\psi_1,\psi_2\in \HD_K(R)$. This defines a group structure on
$\HD_K(R)$ (see \cite{matsumura},\S 27).
\end{defn}

The link to (ordinary) derivations is given by
\begin{prop}
For $\ch(K)=0$, the set $\Der(R):=\{\psi^{(1)} \mid \psi\in
\HD_K(R)\}$ is the $R$-module of derivations on $R$
(cf. Prop. \ref{der_itder}).
\end{prop}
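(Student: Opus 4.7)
The plan is to prove set equality between $\{\psi^{(1)} \mid \psi \in \HD_K(R)\}$ and the $R$-module $\Der(R)$ of $K$-derivations of $R$ in the classical sense; once the two sets coincide, the $R$-module structure is automatic.

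For the inclusion $\{\psi^{(1)} \mid \psi \in \HD_K(R)\} \subseteq \Der(R)$, I would apply the multiplicativity identity from Remark \ref{highder_formel}(ii) at level $k = 1$: combining
$$\psi^{(1)}(rs) = \psi^{(0)}(r)\psi^{(1)}(s) + \psi^{(1)}(r)\psi^{(0)}(s)$$
with $\psi^{(0)} = \id_R$ yields exactly the Leibniz rule. $K$-linearity of $\psi^{(1)}$ comes from writing $\psi^{(1)} = \pr_1 \circ \psi$ as the composition of two $K$-linear maps (using the identification $R \cdot T \isom R$). No characteristic hypothesis is needed here.

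For the reverse inclusion, given a $K$-derivation $\partial : R \to R$, I would use the explicit construction of Example \ref{phi_t_j}(i) and set $\phi_\partial(r) := \sum_{k=0}^\infty \frac{1}{k!}\partial^k(r) T^k$, which is meaningful precisely because $\ch(K) = 0$. To conclude via Remark \ref{highder_formel}(ii) that $\phi_\partial \in \HD_K(R)$, I only need to check (a) $\phi_\partial^{(0)} = \id_R$, which is immediate, and (b) the multiplicativity of the components. For (b), the classical higher Leibniz formula $\partial^k(rs) = \sum_{i+j=k} \binom{k}{i}\partial^i(r)\partial^j(s)$ (a routine induction on $k$ starting from the ordinary product rule) transforms, upon dividing by $k!$, into
$$\frac{1}{k!}\partial^k(rs) = \sum_{i+j=k} \frac{1}{i!\,j!}\partial^i(r)\partial^j(s),$$
which is precisely the desired identity for $\phi_\partial^{(k)}$. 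Evidently $\phi_\partial^{(1)} = \partial$, establishing the reverse inclusion.

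The only nontrivial ingredient is the invertibility of $k!$ in $K$, which is exactly what fails in positive characteristic and motivates the higher-derivation framework. Apart from that, both directions reduce to the defining identity of Remark \ref{highder_formel}(ii) and the classical Leibniz formula, so I do not anticipate any genuine obstacle.
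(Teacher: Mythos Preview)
Your proof is correct and follows essentially the same route as the paper: the paper defers to Proposition~\ref{der_itder}, whose proof uses precisely the Leibniz rule at level~$1$ for the inclusion $\{\psi^{(1)}\}\subseteq \Der(R/K)$ and the exponential construction $\phi_\partial(r)=\sum_k \frac{1}{k!}\partial^k(r)T^k$ of Example~\ref{phi_t_j}(i) for the reverse inclusion. The only cosmetic difference is that the paper checks $\phi_\partial$ is even iterative, whereas you verify the (weaker) higher-derivation axioms directly via the higher Leibniz formula; either suffices here.
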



We now turn our attention to the universal higher derivation:
\begin{thm}\label{diff_exists}
Up to isomorphism, there exists a unique \Alg{R} $\Dif_{R/K}$ (which
we call the {\markdef algebra of higher differentials})
together with a higher derivation $\d_R:R\to \Dif_{R/K}$ satisfying the
following universal property:\\
For each \Alg{\tilde{R}} $B$ and each higher derivation $\psi:R\to B$ there
exists a unique homomorphism of \Algen{\tR} $\tilde{\psi}:\tR\otimes
\Dif_{R/K}\to B$ with $\tilde{\psi} \circ (1\otimes \d_R) = \psi$.
In other words, $\Dif_{R/K}$ represents the functor
$\HD_K(R,-)$.
\end{thm}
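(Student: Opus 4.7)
I would construct $\Dif_{R/K}$ explicitly as the completion of a graded $R$-algebra built from generators and relations, then verify the universal property by directly matching the Leibniz-type identity satisfied by the components of any higher derivation.

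Let $F$ be the free commutative $R$-algebra generated by symbols $\d^{(k)}(r)$, one for each pair $(r,k) \in R \times \NN$ with $k \geq 1$, and grade $F$ by placing $\d^{(k)}(r)$ in degree $k$ (and $R$ in degree $0$). Write $\d^{(0)}(r):=r$ as a shorthand, and let $J\ideal F$ be the graded ideal generated by the homogeneous elements
\begin{align*}
\d^{(k)}(\alpha r + \beta s) - \alpha\, \d^{(k)}(r) - \beta\, \d^{(k)}(s) &\qquad (\alpha,\beta\in K,\ r,s\in R,\ k\geq 1),\\
\d^{(k)}(rs) - \sum_{i+j=k} \d^{(i)}(r)\, \d^{(j)}(s) &\qquad (r,s\in R,\ k\geq 1).
\end{align*}
Every generator is homogeneous, so $F/J$ is a graded $R$-algebra; set $\Dif_{R/K}$ equal to its completion and define $\d_R:R\to \Dif_{R/K}$ by $\d_R(r):=\sum_{k\geq 0}\d^{(k)}(r)$. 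The first family of relations makes each $\d_R^{(k)}$ a $K$-linear map, and the second (together with $\d_R^{(0)}=\id_R$) is precisely the condition recorded in Section~\ref{general_notation} for $\d_R$ to be a homomorphism of $K$-algebras. Hence $\d_R\in\HD_K(R,\Dif_{R/K})$.

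For the universal property, fix a \Alg{\tR} $B$ and a higher derivation $\psi:R\to B$ with components $\psi^{(k)}:R\to B_k$. The characterization of $K$-algebra homomorphisms between \cgas{} from Section~\ref{general_notation} says that the family $\{\psi^{(k)}\}$ satisfies exactly the relations cutting out $J$, so the assignment $\d^{(k)}(r)\mapsto\psi^{(k)}(r)$ (together with the inclusion $R\hookrightarrow B$ via $f$) defines a well-defined homomorphism of graded $R$-algebras $F/J \to \bigoplus_k B_k$. Since it respects degrees, it is continuous for the filtrations induced by the gradings and extends uniquely to a continuous homomorphism $\Dif_{R/K}\to B$ of \cgas{} induced by a graded homomorphism. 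Base-changing along $f:R\to \tR$ gives an $\tR$-\cga{} homomorphism $\tilde\psi:\tR\otimes \Dif_{R/K}\to B$ with $\tilde\psi\circ(1\otimes \d_R)=\psi$. Uniqueness of $\tilde\psi$ is forced by taking degree-$k$ parts of the equality $\tilde\psi(1\otimes \d_R(r))=\psi(r)$, which pins down $\tilde\psi(1\otimes \d^{(k)}(r))=\psi^{(k)}(r)$ on the generators. Uniqueness of $(\Dif_{R/K},\d_R)$ up to unique isomorphism is then the usual Yoneda argument applied to the representable functor $\HD_K(R,-)$.

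The point I expect to require most care is the interplay between the graded construction and the completion: one needs to verify that the defining relations of $J$ are genuinely homogeneous (so $F/J$ inherits a grading), that the induced map out of $F/J$ is a graded homomorphism of bounded degree shift (so it extends uniquely after completion), and — most importantly — that the resulting $\tilde\psi$ qualifies as a morphism of \cgas{} in the strict sense of Section~\ref{general_notation}, i.e.\ comes from a graded homomorphism of the underlying graded algebras, rather than being just a continuous $\tR$-algebra map. All three points reduce to degree-tracking, but omitting any of them would leave a gap.
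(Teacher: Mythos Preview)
Your proposal is correct and follows essentially the same approach as the paper: construct a polynomial $R$-algebra on symbols $\d^{(k)}r$, impose the homogeneous relations encoding $K$-linearity and the higher Leibniz rule, complete, and read off the universal property from the presentation. The only cosmetic difference is that the paper separates $K$-linearity into additivity plus the vanishing of $\d^{(k)}a$ for $a\in K$, whereas you bundle these into a single $K$-linear relation; the two presentations define the same ideal once the Leibniz relation is available.
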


\begin{proof}
We construct $\Dif_{R/K}$. Uniqueness is given by the universal
property.\\
Let $G=R[\d^{(k)}r \mid k\in\NN_+,r\in R]$ be the polynomial algebra over
$R$ in the variables $\d^{(k)}r$ and let the degree of  $\d^{(k)}r$ be
$k$. Define $I\ideal G$ to be the ideal generated by the union of the sets
\begin{align*}
&\{ \d^{(k)}(r+s)-\d^{(k)}r-\d^{(k)}s \mid k\in\NN_+; r,s\in R\}, \\
&\{ \d^{(k)}a\mid k\in\NN_+; a\in K\}\quad \text{ and} \\
&\{ \d^{(k)}(rs)-\sum_{i=0}^k \d^{(i)}r\cdot \d^{(k-i)}s\mid
  k\in\NN_+; r,s\in R\}, 
\end{align*}
where we set $\d^{(0)}r=r$ for all $r\in R$.
Therefore $I$ is a homogeneous ideal and we define $\Dif_{R/K}$ to be the
completion of the graded algebra $G/I$. We also define the higher
derivation $\d_R:R\to \Dif_{R/K}$ by $\d_R(r):=\sum_{k=0}^\infty
\d^{(k)}r$. (Here and in the following the residue class of
$\d^{(k)}r\in G$ in $\Dif_{R/K}$ will also be denoted by $\d^{(k)}r$.)\\
The universal property is seen as follows:
Let  $\psi:R\to B$ be a higher derivation. Then we define an
$R$-algebra-homomorphism  $g:G\to B$ by $g(\d^{(k)}r):= \psi^{(k)}(r)$
for all $k>0$ and $r\in R$. The properties of a higher derivation
imply that $I$ lies in the kernel of $g$, and therefore $g$ factors
through $\overline{g}:G/I\to B$. Hence, we get a homomorphism of algebras
$\Dif_{R/K}\to B$ by  extending $\overline{g}$ continuously and
therefore a homomorphisms of \Algen{\tR} $\tilde{\psi}:\tR\otimes
\Dif_{R/K}\to B$.
On the other hand, the condition $\tilde{\psi} \circ (1\otimes \d_R)
   = \psi$ forces this choice of $g$ and so $\tilde{\psi}$ is unique. 
\end{proof}

\begin{rem}
In \cite{vojta}, P.~Vojta defined an $R$-algebra $HS_{R/K}^\infty$ that
represents the 
functor $\tilde{R}\mapsto \HD_K(R,\tR[[T]])$ for any $R$-algebra
$\tR$. Actually, $HS_{R/K}^\infty$ is a graded algebra, and 
by construction $\Dif_{R/K}$ is just the completion of
$HS_{R/K}^\infty$ (cf. the construction of $HS_{R/K}^\infty$ in
\cite{vojta}, Def. 1.3). Hence, some properties of $\Dif_{R/K}$ can be
deduced from the properties of $HS_{R/K}^\infty$ given in \cite{vojta}.
\end{rem}

\begin{prop}\label{isom_of_diff}
\begin{enumerate}
\item[(a)] For every homomorphism of rings $f:R\to \tilde{R}$ there is
a unique homomorphism of \Algen{\tilde{R}} $Df:\tilde{R}\otimes
\Dif_{R/K}\to \Dif_{\tilde{R}/K}$ such that $\d_{\tilde{R}}\circ f= Df
\circ (1\otimes \d_R)$.
\item[(b)] If $\tilde{R}$ is \fe{} over $R$, then $Df$ is
an isomorphism.
\end{enumerate}
\end{prop}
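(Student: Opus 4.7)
For part (a), the plan is to apply the universal property of $\Dif_{R/K}$ directly. The composition $\d_{\tilde R}\circ f:R\to \Dif_{\tilde R/K}$ is a homomorphism of $K$-algebras whose augmentation is $\eps\circ \d_{\tilde R}\circ f = \id_{\tilde R}\circ f = f$, so it is a higher derivation of $R$ into the $\tilde R$-\cga{} $\Dif_{\tilde R/K}$. Theorem \ref{diff_exists} then yields a unique homomorphism of $\tilde R$-\cgas{} $Df:\tilde R\otimes \Dif_{R/K}\to \Dif_{\tilde R/K}$ with $Df\circ (1\otimes \d_R)=\d_{\tilde R}\circ f$, which is exactly what is required.

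For part (b), the strategy is to construct an inverse and check the compositions via uniqueness. Since $\tilde R$ is formally \'etale over $R$, Proposition \ref{unique_extension} applied to the higher derivation $1\otimes \d_R:R\to \tilde R\otimes \Dif_{R/K}$ (which takes values in the $\tilde R$-\cga{} $\tilde R\otimes \Dif_{R/K}$) produces a unique extension to a higher derivation $\psi_e:\tilde R\to \tilde R\otimes \Dif_{R/K}$. The universal property of $\Dif_{\tilde R/K}$, together with the canonical identification $\tilde R\otimes_{\tilde R}\Dif_{\tilde R/K}=\Dif_{\tilde R/K}$, yields a unique homomorphism of $\tilde R$-\cgas{} $g:\Dif_{\tilde R/K}\to \tilde R\otimes \Dif_{R/K}$ with $g\circ \d_{\tilde R}=\psi_e$.

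To see $Df\circ g=\id_{\Dif_{\tilde R/K}}$, observe that $Df\circ g\circ \d_{\tilde R}=Df\circ \psi_e$ is a higher derivation of $\tilde R$ into $\Dif_{\tilde R/K}$, and restricting to $R$ via $f$ gives $Df\circ (1\otimes \d_R)=\d_{\tilde R}\circ f$. Hence both $Df\circ \psi_e$ and $\d_{\tilde R}$ extend the higher derivation $\d_{\tilde R}\circ f:R\to \Dif_{\tilde R/K}$ to $\tilde R$, and so must coincide by the uniqueness in Proposition \ref{unique_extension}. Applying the uniqueness part of the universal property of $\Dif_{\tilde R/K}$ to this equality gives $Df\circ g=\id$.

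For $g\circ Df=\id_{\tilde R\otimes \Dif_{R/K}}$, the key point is that both sides are homomorphisms of $\tilde R$-\cgas{} from $\tilde R\otimes \Dif_{R/K}$ to itself, so by the universal property of $\Dif_{R/K}$ it suffices to check agreement after precomposition with $1\otimes \d_R$. We compute $g\circ Df\circ (1\otimes \d_R)=g\circ \d_{\tilde R}\circ f=\psi_e\circ f=1\otimes \d_R$, where the last equality holds because $\psi_e$ extends $1\otimes \d_R$. This completes the argument. The only mildly subtle step is making sure that $g$ is well-defined from $\Dif_{\tilde R/K}$ itself (not from a base-change), which is handled by identifying $\Dif_{\tilde R/K}$ with $\tilde R\otimes_{\tilde R}\Dif_{\tilde R/K}$; beyond that, the proof is a formal diagram-chase driven by the two uniqueness statements in Theorem \ref{diff_exists} and Proposition \ref{unique_extension}.
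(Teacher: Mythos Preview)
Your proof of part (a) is exactly the paper's: apply the universal property of $\Dif_{R/K}$ to the higher derivation $\d_{\tilde R}\circ f$.

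For part (b) the paper does not argue directly; it simply invokes \cite{vojta}, Thm.~3.6, which shows that the corresponding map of underlying graded algebras $\tilde R\otimes HS_{R/K}^\infty\to HS_{\tilde R/K}^\infty$ is an isomorphism when $\tilde R/R$ is formally \'etale, and then passes to completions. Your argument is genuinely different and entirely self-contained: you use Proposition~\ref{unique_extension} to extend $1\otimes\d_R$ to a higher derivation $\psi_e:\tilde R\to\tilde R\otimes\Dif_{R/K}$, obtain the candidate inverse $g$ from the universal property of $\Dif_{\tilde R/K}$, and then verify both compositions via the uniqueness clauses of Theorem~\ref{diff_exists} and Proposition~\ref{unique_extension}. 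This is correct, and it has the advantage of keeping the argument internal to the paper's framework rather than appealing to Vojta's result on $HS^\infty$. The paper's route is shorter on the page but outsources the substance; yours makes transparent that the formally \'etale lifting property (via Proposition~\ref{unique_extension}) is precisely what produces the inverse.
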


\begin{proof}
Since $\d_{\tR}\circ f$ is a higher derivation on $R$, part (a)
follows from the universal property of $\Dif_{R/K}$.
Part (b) follows from \cite{vojta}, Thm. 3.6, where it is shown that
the homomorphism of the underlying graded algebras is an isomorphism
in this case.
\end{proof}

We consider three important examples.

\begin{thm}\label{dif_formula}
\begin{enumerate}
\item[(a)] Let $R=K[t_1,\dots ,t_m]$ be the polynomial ring in $m$
variables. Then 
$\Dif_{R/K}$ is the completion of the polynomial algebra
 $R[\d^{(i)}t_j \mid i\in\NN_+,j=1,\dots,m ]$.
\item[(b)] Let $F/K(t_1,\dots, t_m)$ be a finite separable field 
extension. Then $\Dif_{F/K}$ is the completion of the polynomial
 algebra
 $F[\d^{(i)}t_j \mid i\in\NN_+,j=1,\dots,m ]$.
\item[(c)] Let $(R,\m)$ be a regular local ring of dimension $m$, let
 $t_1,\dots, t_m$ generate $\m$ and assume that $R$ is a localisation
  of a finitely generated $K$-algebra and that $R/\m$ is a finite
  separable extension of $K$. Then $\Dif_{R/K}$ is the
completion of the polynomial algebra $R[\d^{(i)}t_j
  \mid i\in\NN_+,j=1,\dots,m ]$.
\end{enumerate}
\end{thm}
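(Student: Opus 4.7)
The overall strategy is to show that the natural map of graded $R$-algebras
$R[\d^{(i)}t_j \mid i\geq 1,\, 1\leq j\leq m] \to \Dif_{R/K}$,
sending each symbol $\d^{(i)}t_j$ to $\d_R^{(i)}(t_j)$, is an isomorphism of graded $R$-algebras; passing to completions then yields all three claims. Parts (b) and (c) will be reduced to (a) via Proposition \ref{isom_of_diff}(b), so the real work lies in (a) and in establishing the relevant formally \'etale properties.

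For part (a), recall from the proof of Theorem \ref{diff_exists} that the underlying graded algebra of $\Dif_{R/K}$ is $G/I$, where $G = R[\d^{(k)}r \mid k\in\NN_+, r\in R]$ and $I$ encodes additivity, the Leibniz rule, and vanishing of $\d^{(k)}$ on $K$. Surjectivity of $R[\d^{(i)}t_j] \to G/I$ is immediate: iterated application of Leibniz and additivity expresses $\d^{(k)}p$ for any $p \in K[t_1,\dots,t_m]$ as a polynomial in the $\d^{(i)}t_j$ with $i\leq k$. For injectivity, let $B$ be the completion of the polynomial $R$-algebra on variables $x_{ij}$ ($i\geq 1$, $1\leq j\leq m$) with $\deg(x_{ij}) = i$, and define $\psi: R\to B$ by $\psi(t_j):= t_j + \sum_{i\geq 1} x_{ij}$. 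Freeness of the polynomial algebra $R$ makes $\psi$ a well-defined $K$-algebra homomorphism, with $\eps\circ\psi=\id_R$, so $\psi\in\HD_K(R,B)$. By the universal property of $\Dif_{R/K}$, $\psi$ induces $\tilde\psi: \Dif_{R/K} \to B$ sending $\d^{(i)}t_j\mapsto x_{ij}$, which are algebraically independent over $R$; hence so are the $\d^{(i)}t_j$ in $\Dif_{R/K}$.

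For part (b), the localisation $K[t_1,\dots,t_m] \hookrightarrow K(t_1,\dots,t_m)$ is formally \'etale by Example \ref{formally_etale_ex}(i), and the finite separable extension $K(t_1,\dots,t_m)\hookrightarrow F$ is formally \'etale by Example \ref{formally_etale_ex}(ii). A direct check from Definition \ref{formally_etale} shows that a composition of formally \'etale maps is formally \'etale, so $F$ is formally \'etale over $K[t_1,\dots,t_m]$. Proposition \ref{isom_of_diff}(b) then gives $\Dif_{F/K} \isom F\otimes_{K[t_1,\dots,t_m]} \Dif_{K[t_1,\dots,t_m]/K}$, which by part (a) is the completion of $F[\d^{(i)}t_j]$.

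For part (c), first check that $t_1,\dots,t_m$ are algebraically independent over $K$: a relation $f(t_1,\dots,t_m)=0$ with $f\in K[X_1,\dots,X_m]\setminus\{0\}$, decomposed as $f=f_d + f_{d+1} + \cdots$ with $f_d$ the lowest nonzero homogeneous part, would force $f_d(t_1,\dots,t_m)\in\m^{d+1}$; but $\gr_\m R \isom (R/\m)[T_1,\dots,T_m]$ by regularity and $f_d$ remains nonzero under $K\hookrightarrow R/\m$, a contradiction. Set $A := K[t_1,\dots,t_m]_{(t_1,\dots,t_m)}\subset R$, a regular local ring of dimension $m$. The key technical step is that $A\hookrightarrow R$ is formally \'etale: \emph{flatness} follows because $t_1,\dots,t_m$ is a regular sequence in both $A$ and $R$, so the Koszul complex over $A$ resolving $K=A/\m_A$ stays exact after $\otimes_A R$, giving $\mathrm{Tor}^A_{>0}(K,R)=0$ and flatness by the local criterion; and \emph{unramifiedness} $\Omega^1_{R/A}=0$ follows from Nakayama since $\Omega^1_{R/A}$ is finitely generated ($R$ is essentially of finite type over $A$) and $\Omega^1_{R/A}\otimes_R (R/\m) = \Omega^1_{(R/\m)/K} = 0$ by finite separability of $R/\m$ over the perfect field $K$. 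Since $A$ is itself formally \'etale over $K[t_1,\dots,t_m]$ by Example \ref{formally_etale_ex}(i), iterating Proposition \ref{isom_of_diff}(b) yields $\Dif_{R/K}\isom R\otimes_{K[t_1,\dots,t_m]} \Dif_{K[t_1,\dots,t_m]/K}$, which by (a) is the completion of $R[\d^{(i)}t_j]$. The principal obstacle is precisely this formally \'etale argument in (c), combining a Koszul/flatness computation with a Nakayama vanishing for relative differentials.
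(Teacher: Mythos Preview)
Your proof is correct, and your treatment of (b) matches the paper's. For (a) you give a direct, self-contained argument (surjectivity via Leibniz, injectivity by mapping to a free cga on independent variables $x_{ij}$), whereas the paper simply cites \cite{vojta}, Prop.~5.1; your version is more elementary and just as clean.

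The real divergence is in (c). You prove that $A=K[t_1,\dots,t_m]_{(t_1,\dots,t_m)}\hookrightarrow R$ is formally \'etale---flatness via the Koszul complex and the local criterion, and $\Omega^1_{R/A}=0$ via Nakayama (your identification $\Omega^1_{R/A}\otimes R/\m\cong\Omega^1_{(R/\m)/K}$ works because the generators $t_i$ of $\m$ lie in $A$, so the conormal map $\m/\m^2\to\Omega^1_{R/A}\otimes R/\m$ is zero)---and then apply Proposition~\ref{isom_of_diff}(b). The paper does \emph{not} establish formal \'etaleness of $R$ over any polynomial ring. Instead it verifies the description of $\Dif_{R/K}$ at the generic point (via part (b), since $\Quot(R)$ is finite separable over $K(t_1,\dots,t_m)$) and at the closed point (showing that the components $\psi^{(k)}$ of any higher derivation $R\to B$ into an $(R/\m)$-cga factor through $R/\m^{k+1}$, lifting $R/\m$ into $R/\m^{k+1}$ by Newton approximation, and thereby identifying such derivations with those on $\gr_\m R\cong (R/\m)[t_1,\dots,t_m]$), and then interpolates between the two fibres using \cite{hartshorne}, Ch.~II, Lemma~8.9, to conclude each $(\Dif_{R/K})_k$ is free. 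Your route is shorter and makes the underlying \'etale geometry explicit; the paper's is more hands-on and avoids invoking ``flat $+$ $\Omega^1=0$ $\Rightarrow$ formally \'etale'' for essentially-finite-type local maps, which you use without justification---it is standard (spread out to an \'etale neighbourhood of the closed point in a finite-type model, then localize), but strictly speaking deserves a sentence.
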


\begin{rem} 
The completion of such a polynomial algebra will be denoted by\linebreak
$R[[\d^{(i)}t_j\mid i\in\NN_+,j=1,\dots,m ]]$, although it is not
really a ring of power series, because it contains infinite sums of
different variables. 
\end{rem}

\begin{proof}
Part (a) is a direct consequence of \cite{vojta}, Prop. 5.1. Part (b)
then follows from Prop. \ref{isom_of_diff}(b), since by Example
\ref{formally_etale_ex}, $K(t_1,\dots, t_m)$ is \fe{} over
$K[t_1,\dots, t_m]$ and $F$ is \fe{} over $K(t_1,\dots, t_m)$.\\
It remains to prove (c):
We will show that $(R/\m)\otimes \Dif_{R/K}$ is isomorphic to
$(R/\m)[[\d^{(i)}t_j]]$. Then, since
$\Quot(R)\otimes \Dif_{R/K}$ is isomorphic to
$\Quot(R)[[\d^{(i)}t_j]]$ (Prop. \ref{isom_of_diff} and part
(b)), by \cite{hartshorne}, Ch.II, Lemma 8.9, it follows that
$(\Dif_{R/K})_k$ is a free $R$-module and that the residue classes of
any basis of  $(\Dif_{R/K})_k$ form a basis of $(R/\m \otimes
\Dif_{R/K})_k$. Hence we obtain $\Dif_{R/K}=R[[\d^{(i)}t_j]]$.\\ 
First, let $\psi:R\to B$ be a higher derivation of $R$ to an
\Alg{R/\m} $B$. Then for all $k\in \NN$ and  $r_1,\dots,
r_{k+1}\in\m$, we have
$$\psi^{(k)}(r_1\cdots r_{k+1})=\sum_{i_1+\dots +i_{k+1}=k}
\psi^{(i_1)}(r_1)\cdots \psi^{(i_{k+1})}(r_{k+1})=0,$$
since in each summand at least one $i_j=0$, and so
$\psi^{(i_1)}(r_1)\cdots \psi^{(i_{k+1})}(r_{k+1})\in \m
B=0$. Therefore $\psi^{(k)}$ (and $\psi^{(i)}$ for $i<k$) factors
through $R/\m^{k+1}$.\\
Next, since $R/\m$ is a finite separable extension of $K$, there is an
element $\bar{y}\in R/\m$ which generates the extension $K\subset
R/\m$. Let 
$g(X)\in K[X]$ be the minimal polynomial of $\bar{y}$, then starting
with an arbitrary representative $y\in R$ for $\bar{y}$, using the
Newton approximation $y_{n+1}=y_n- g(y_n)g'(y_n)^{-1}$, we obtain an
element $\tilde{y}_k\in R$ such that $g(\tilde{y}_k)\congr 0
\mod{\m^{k+1}}$ for given $k\in\NN$. (Note that the Newton
approximation is well defined and converges to a root of $g(X)$, due
to the fact that $\overline{g(y)}=g(\overline{y})=0\in  
R/\m$ and $\overline{g'(y)}=g'(\overline{y})\ne 0\in
R/\m$, so $g(y)\in \m$ and $g(y)\in R^\times$, as well as inductively
for all $n\in\NN$:  
$\bar{y}_{n+1}=\bar{y}_n=\bar{y}\in R/\m$, $g(y_{n+1})\in \m$ and
$g'(y_{n+1})\in R^\times$.) This proves that for all $k\in \NN$, the
ring $R/\m^{k+1}$ contains a subfield isomorphic to $R/\m$.\\
Now by \cite{matsumura}, Theorem 14.4, the associated graded ring
$\gr(R)$ of $R$ is isomorphic to the polynomial ring $(R/\m)[t_1,\dots,
t_m]$ and therefore we obtain
$\gr(R/\m^{k+1})\isom (R/\m)[t_1,\dots,
t_m]/\n^{k+1}$, where $\n$ is the ideal generated by 
$\{t_1,\dots, t_m\}$.
Furthermore, since $R/\m^{k+1}$ contains a subfield isomorphic to
$R/\m$, we see that the inclusion
$\iota_k:(R/\m)[t_1,\dots,t_m]/\n^{k+1}\to R/\m^{k+1}$ (given by the inclusion
$K[t_1,\dots,t_m]/\n^{k+1}\subset R/\m^{k+1}$ and $\bar{y}\mapsto
\tilde{y}_k$) is an isomorphism.\\
Hence, every higher derivation $\psi_\gr : \gr(R)\to B$ into an
\Alg{R/\m} $B$ induces a higher derivation $\psi_R:R\to B$ on $R$ by
$\psi_R^{(k)}:=\psi_\gr^{(k)} \circ \iota_k^{-1}$ ($k\in\NN$) and vice
versa. So 
$R/\m\otimes_R \Dif_{R/K} \isom R/\m \otimes_{\gr(R)} \Dif_{\gr(R)/K}  =
(R/\m)[[\d^{(i)}t_j]]$. 
\end{proof}

\begin{cor}\label{dif_projective}
Let $R$ be a finitely generated $K$-algebra which is a regular ring,
then the homogeneous components $(\Dif_{R/K})_k$ ($k\in\NN$) are
projective $R$-modules of finite rank.
\end{cor}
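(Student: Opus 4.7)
The plan is to show that $(\Dif_{R/K})_k$ is a finitely generated $R$-module that is locally free at every maximal ideal of $R$; since $R$ is Noetherian, this will suffice for projectivity of finite rank.

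First I would establish finite generation. Pick a surjection of $K$-algebras $\pi:K[t_1,\dots,t_n]\twoheadrightarrow R$. By Proposition \ref{isom_of_diff}(a), $\pi$ induces a homomorphism of \cgas{} $R\otimes_{K[t_1,\dots,t_n]}\Dif_{K[t_1,\dots,t_n]/K}\to \Dif_{R/K}$. This map is surjective in each degree: for every $r\in R$ with lift $p\in K[t_1,\dots,t_n]$, the $K$-algebra property of $\d_R$ gives $\d_R(r)=p(\d_R(\pi(t_1)),\dots,\d_R(\pi(t_n)))$, so every component $\d_R^{(k)}(r)$ lies in the $R$-subalgebra generated by the images of the $\d_{K[t_1,\dots,t_n]}^{(i)}(t_j)$. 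By Theorem \ref{dif_formula}(a), $(\Dif_{K[t_1,\dots,t_n]/K})_k$ is the free $K[t_1,\dots,t_n]$-module on monomials of weighted degree $k$ in the variables $\d^{(i)}t_j$, of which only finitely many exist. Hence $(\Dif_{R/K})_k$ is a finitely generated $R$-module.

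For local freeness, fix a maximal ideal $\m\subset R$. As $R$ is finitely generated over $K$, Zariski's lemma gives that $R/\m$ is a finite extension of $K$, and by perfectness of $K$ it is separable. Since $R$ is a regular integral domain finitely generated over $K$, all maximal ideals have the same height $d:=\dim R$, so $R_\m$ is a regular local ring of dimension $d$ with residue field finite separable over $K$. Theorem \ref{dif_formula}(c) then applies to $R_\m$ and shows that the $k$-th homogeneous component of $\Dif_{R_\m/K}$ is a free $R_\m$-module of finite rank (depending only on $k$ and $d$). Because $R_\m$ is a localisation of $R$ it is formally \'etale over $R$ (Example \ref{formally_etale_ex}(i)), so Proposition \ref{isom_of_diff}(b) gives an isomorphism of \cgas{} $R_\m\otimes_R\Dif_{R/K}\isom \Dif_{R_\m/K}$, which in degree $k$ becomes an isomorphism $R_\m\otimes_R(\Dif_{R/K})_k\isom (\Dif_{R_\m/K})_k$.

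Combining, $(\Dif_{R/K})_k$ is a finitely generated (hence finitely presented, since $R$ is Noetherian) $R$-module whose localisation at every maximal ideal is free of the same finite rank, so it is projective of finite rank over $R$. The main obstacle is the finite-generation step: the construction of $\Dif_{R/K}$ uses one generator $\d^{(i)}r$ for every pair $(i,r)\in\NN_+\times R$, so some reduction --- here, to a polynomial presentation of $R$ --- is required to bound the number of generators degree-by-degree.
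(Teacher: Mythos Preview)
Your argument is correct and follows the same route as the paper: localise at each maximal ideal, apply Theorem~\ref{dif_formula}(c) together with Proposition~\ref{isom_of_diff}(b) to see that $(\Dif_{R/K})_k$ is locally free of finite rank, and conclude projectivity. The paper simply cites \cite{eisenbud}, Thm.~A3.2 for the last step; you instead spell out the finite-generation of $(\Dif_{R/K})_k$ via a polynomial presentation of $R$, which is exactly the hypothesis that theorem needs and which the paper leaves implicit.
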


\begin{proof}
For every maximal ideal $\m\ideal R$, the localisation $R_\m$ fulfills
the conditions of Theorem \ref{dif_formula}(c). And so by Proposition
\ref{isom_of_diff}, $R_\m\otimes_R (\Dif_{R/K})_k\isom
(\Dif_{R_\m/K})_k$ is a free $R_\m$-module of finite rank. Hence by
\cite{eisenbud}, 
Thm. A3.2, $(\Dif_{R/K})_k$ is a projective $R$-module of finite rank.
\end{proof}

From now on we also write $\Dif$ for $\Dif_{R/K}$.

\begin{thm}\label{d_Dif}
For each $a\in K$, there is a continuous
homomorphism of $K$-algebras $a.\d_{\Dif}:\Dif\to \Dif$ defined by
$$a.\d_{\Dif}\left(\d_R^{(i)}r\right):= \sum_{j=0}^\infty
a^j\binom{i+j}{j}\d_R^{(i+j)}r$$ 
for all $i\in\NN$ and $r\in R$. The homomorphisms $a.\d_{\Dif}$
satisfy the following three conditions:
\begin{enumerate}
\item $a.\d_{\Dif}$ extends the higher derivation $a.\d_R:R\to \Dif$.
\item For all $a,b\in K$: $(a.\d_{\Dif})\circ (b.\d_{\Dif})=(a+b).\d_{\Dif}$.
\item $0.\d_{\Dif}=\id_{\Dif}$.
\end{enumerate}
For short, we will write $\d_{\Dif}$ instead of $1.\d_{\Dif}$ and
$-\d_{\Dif}$ instead of $-1.\d_{\Dif}$.
\end{thm}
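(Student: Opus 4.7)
The plan is to construct $a.\d_\Dif$ directly from the presentation of $\Dif$ given in the proof of Theorem \ref{diff_exists}, namely as the completion of $G/I$, where $G = R[\d^{(k)} r : k \in \NN_+,\, r \in R]$ is a polynomial algebra and $I$ is the homogeneous ideal encoding the higher-derivation relations. The universal property cannot be applied directly: Theorem \ref{diff_exists} only yields homomorphisms of \Algen{R}, which are necessarily graded of degree zero and fix $R$, whereas $a.\d_\Dif$ is neither graded nor identical on $R$ (property (i) already forces $a.\d_\Dif|_R = a.\d_R \ne \id_R$ for $a \ne 0$). Hence I would prescribe $a.\d_\Dif$ on $K$-algebra generators and verify compatibility with $I$.

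Concretely, define the $K$-algebra homomorphism $\phi_a : G \to \Dif$ by setting $\phi_a|_R := a.\d_R$ and $\phi_a(\d^{(k)} r) := \sum_{j=0}^\infty a^j \binom{k+j}{j} \d_R^{(k+j)} r$ for all $k \geq 1$, $r \in R$; each image is a convergent series in $\Dif$. Next, one verifies that $\phi_a$ annihilates the three families of generators of $I$: the additivity relation follows from the $K$-linearity of each $\d_R^{(n)}$; the relation $\d^{(k)} a = 0$ for $a \in K$ follows from $\d_R^{(n)}(K) = 0$ for $n \geq 1$; and the Leibniz relation, after expanding both sides and matching the coefficient of $\d_R^{(p)} r \cdot \d_R^{(q)} s$, reduces to Vandermonde's identity $\sum_i \binom{p}{i} \binom{q}{k-i} = \binom{p+q}{k}$. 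Since the image of every homogeneous degree-$k$ generator lies in $\prod_{n \geq k} \Dif_n$, the induced map $G/I \to \Dif$ is continuous for the filtration topology, and extends uniquely to a continuous $K$-algebra homomorphism $a.\d_\Dif : \Dif \to \Dif$.

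The three properties then follow in turn. Condition (i) is the $i = 0$ case of the defining formula, giving $a.\d_\Dif(r) = \sum_j a^j \d_R^{(j)} r = (a.\d_R)(r)$. Condition (iii) is immediate from $0^0 = 1$ and $0^j = 0$ for $j \geq 1$. For (ii), both sides are continuous $K$-algebra endomorphisms of $\Dif$, so it suffices to check the equality on the topological generators $\d_R^{(i)} r$; a direct expansion of $(a.\d_\Dif) \circ (b.\d_\Dif)(\d_R^{(i)} r)$ and collection of the coefficient of $\d_R^{(i+m)} r$ reduces the identity to $\sum_{l=0}^m \binom{i+m-l}{m-l} \binom{i+m}{l} a^l b^{m-l} = \binom{i+m}{m}(a+b)^m$, which in turn follows from the symmetry $\binom{i+m-l}{m-l} \binom{i+m}{l} = \binom{i+m}{m} \binom{m}{l}$ combined with the binomial theorem. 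I expect the main technical step to be the Leibniz verification in the middle paragraph and the binomial identity underlying (ii); both reduce to standard identities, so the difficulty is careful bookkeeping rather than anything conceptual.
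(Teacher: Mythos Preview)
Your proposal is correct and follows essentially the same route as the paper: the paper's proof merely states that well-definedness ``is not hard to check'', that (i) and (iii) are obvious, and that (ii) follows from ``an explicit calculation using some combinatorial identities'' (with details deferred to \cite{roesch}). You have supplied exactly those details---verifying compatibility with the ideal $I$ via Vandermonde, and reducing (ii) to the identity $\binom{i+m-l}{m-l}\binom{i+m}{l}=\binom{i+m}{m}\binom{m}{l}$---so your argument is a fleshed-out version of the paper's sketch rather than a different approach. Your remark that the universal property of Theorem~\ref{diff_exists} is not directly applicable (since $a.\d_\Dif$ is not a graded \Alg{R}-homomorphism) is a useful clarification that the paper leaves implicit.
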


\begin{proof}
It is not hard to check that $a.\d_\Dif$ is well defined. Then the
first and third statements are obvious and the second one is shown by
an explicit calculation using some combinatorial identities (see
\cite{roesch}, Theorem 2.5 for details). 
\end{proof}

\begin{rem}
\begin{enumerate}
\item By the second and the third property, we see that $a.\d_{\Dif}$
is actually an automorphism of $\Dif$ for all $a\in K$.
The endomorphisms $a.\d_{\Dif}$ play an important role in the iterative
theory, as will be seen in Section \ref{iterative_theory}.
\item From the definition, we see that $a.\d_{\Dif}$ is the image of
  $\d_{\Dif}$ under the action of $a\in K$, as given in Section \ref{general_notation}, thus the notation $a.\d_{\Dif}$.
\end{enumerate}
\end{rem}

\begin{prop}
For all $i,j\in\NN$ we have:
$$\d_{\Dif}^{(i)}\circ \d_{\Dif}^{(j)}=\binom{i+j}{i}
\d_{\Dif}^{(i+j)},$$
where $\d_{\Dif}^{(i)}$ denotes the $i$-th homogeneous component of
$\d_{\Dif}$ (cf. Section \ref{general_notation}). 
\end{prop}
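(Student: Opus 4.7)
The plan is to reduce to elements of a topological $K$-algebra generating set and then use the algebra-homomorphism property of $\d_\Dif$ together with Vandermonde's identity. Both $\d_\Dif^{(i)}\circ\d_\Dif^{(j)}$ and $\binom{i+j}{i}\d_\Dif^{(i+j)}$ are continuous $K$-linear maps raising degree by exactly $i+j$, so verifying their equality on a generating family (and checking compatibility with products) suffices, with continuity taking care of the passage from $G/I$ to its completion $\Dif$.

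\emph{On generators.} From $\d_\Dif = 1.\d_\Dif$ in Theorem \ref{d_Dif}, extracting the degree-$(k+j)$ part of the defining formula gives $\d_\Dif^{(j)}(\d_R^{(k)}r) = \binom{k+j}{j}\d_R^{(k+j)}r$ for all $k\geq 0$ (setting $\d_R^{(0)}r := r$). Iterating once, both sides of the claimed identity applied to $\d_R^{(k)}r$ are scalar multiples of $\d_R^{(k+i+j)}r$, and the identity reduces to
$$\binom{k+j}{j}\binom{k+i+j}{i} = \binom{i+j}{i}\binom{k+i+j}{i+j},$$
both products equalling the multinomial coefficient $\tfrac{(k+i+j)!}{i!\,j!\,k!}$.

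\emph{Passage to products.} Since $\d_\Dif$ is a homomorphism of $K$-algebras, its homogeneous components satisfy the rule $\d_\Dif^{(\ell)}(\alpha\beta) = \sum_{p+q=\ell}\d_\Dif^{(p)}(\alpha)\,\d_\Dif^{(q)}(\beta)$ recorded in Section \ref{general_notation}. Given the identity on $\alpha$ and $\beta$ for all pairs of indices, expanding $\d_\Dif^{(i)}\d_\Dif^{(j)}(\alpha\beta)$ via the product rule twice yields a quadruple sum over $s+t=i$, $p+q=j$; substituting the inductive hypothesis on the inner compositions and regrouping with $m=s+p$, $n=t+q$ produces an inner sum $\sum_s\binom{m}{s}\binom{n}{i-s}$, which Vandermonde's identity collapses to $\binom{m+n}{i}=\binom{i+j}{i}$. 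What remains is exactly $\binom{i+j}{i}\d_\Dif^{(i+j)}(\alpha\beta)$. Induction on the number of generator-factors then extends the identity to the dense subalgebra $G/I$, and continuity finishes the proof on all of $\Dif$.

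I expect the only real work to be the bookkeeping of indices in the product step --- four summation variables $s,t,p,q$ must be regrouped cleanly so that Vandermonde applies. A slicker attempt would be to extract the degree-$(i+j)$ component of the relation $(a.\d_\Dif)\circ(b.\d_\Dif)=(a+b).\d_\Dif$ from Theorem \ref{d_Dif}(2) and compare the coefficients of $a^ib^j$, exploiting $(c.g)^{(k)}=c^k g^{(k)}$; but equating polynomial coefficients in $a,b$ is only legitimate when $K$ is infinite, an assumption not made here, so the direct computation above is preferable in the stated generality.
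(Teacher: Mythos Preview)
Your direct computational proof is correct. You verify the identity on the generators $\d_R^{(k)}r$ via a multinomial-coefficient identity, propagate to finite products using the Leibniz rule for $\d_\Dif^{(\ell)}$ together with Vandermonde's identity, and then extend by $K$-linearity and continuity to all of $\Dif$; each step goes through cleanly.

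The paper, by contrast, takes precisely the ``slicker'' route you sketch at the end and then set aside: it reads off $\d_\Dif^{(i)}\circ\d_\Dif^{(j)}$ as the coefficient of $a^ib^j$ in $(a.\d_\Dif)\circ(b.\d_\Dif)=(a+b).\d_\Dif$ (Theorem~\ref{d_Dif}(ii)) and identifies this with $\binom{i+j}{i}\d_\Dif^{(i+j)}$ via the binomial expansion of $(a+b)^k$. Your concern about finite $K$ is legitimate, and the paper addresses it by invoking the trick of Remark~\ref{iteration_trick}: one base-changes to the infinite field $K^{\sep}$, over which coefficient comparison is valid, and then restricts back. So your argument trades this base-change manoeuvre for explicit combinatorics; it is longer but entirely self-contained and avoids any appeal to scalar extension. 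The paper's argument is shorter and makes the structural origin of the identity --- the additive one-parameter family $a\mapsto a.\d_\Dif$ --- more transparent.
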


\begin{proof}
For all $i,j\in\NN$ and $\omega\in\Dif$, the term $\left(\d_{\Dif}^{(i)}\circ
\d_{\Dif}^{(j)}\right)(\omega)$ is the coefficient of $a^ib^j$ in the
expression
$\left((a.\d_{\Dif})\circ(b.\d_{\Dif})\right)(\omega)$. Furthermore by Theorem
\ref{d_Dif}, $(a.\d_{\Dif})\circ(b.\d_{\Dif})=(a+b).\d_{\Dif}$ and so
$\left(\d_{\Dif}^{(i)}\circ \d_{\Dif}^{(j)}\right)(\omega)$ is the
coefficient of 
$a^ib^j$ in the expression $(a+b).\d_{\Dif}(\omega)=\sum_{k=0}^\infty
(a+b)^k \d_{\Dif}^{(k)}(\omega)$, \ie{}, equals
$\binom{i+j}{i}\d_{\Dif}^{(i+j)}(\omega)$. (For a finite field $K$,
one has to use the little trick explained in Remark
\ref{iteration_trick} to justify this conclusion.)
\end{proof}

\section{Higher Derivations on Modules and Higher
  Connections}\label{highder_modules} 

In the following, $M$ will denote a finitely generated $R$-module and
$B$ will be a \Alg{\tR}.

\begin{defn}
Let $\psi:R\to B$ be a higher derivation of $R$ to $B$ over $K$. A
{\markdef (higher) $\psi$-derivation} of $M$ is an additive map
$\Psi:M\to B\otimes_R M$ with  
$(\eps\otimes \id_M) \circ \Psi=f\otimes \id_M$ and
$\Psi(rm)=\psi(r) \Psi(m)$ for all $r\in R,m\in M$.
The set of $\psi$-derivations of $M$ is denoted by  $\HD(M,\psi)$.\\
A {\markdef higher connection} on $M$ is a $\d_R$-derivation $\nabla\in
\HD(M,\d_R)$. If $\nabla$ is a higher connection on $M$, for
any higher derivation $\psi\in \HD_K(R,B)$, we define the
$\psi$-derivation $\nabla_\psi$ on $M$ by 
$$\nabla_\psi:=(\tilde{\psi}\otimes \id_M)\circ \nabla:M\to
\Dif_{R/K}\otimes_R M\to B\otimes_R M\ $$
(cf. Remark \ref{komp_modul}(i)).\\
For all $a\in K$ we define an endomorphism $a.\difnabla:\Dif\otimes_R M\to
\Dif\otimes_R M$ by 
$$(a.\difnabla)(\omega \otimes x):= a.\d_{\Dif}(\omega)\cdot
(a.\nabla)(x)$$ 
for all $\omega\in\Dif$ and $x\in M$, i.e. 
$a.\difnabla = (\mu_{\Dif}\otimes \id_M)\circ (a.\d_{\Dif}\otimes
a.\nabla)$, where $\mu_{\Dif}$ denotes the multiplication map in $\Dif$.
\end{defn}

\begin{rem}\label{komp_modul}
\begin{enumerate}
\item For a given $\psi\in \HD_K(R,B)$, every homomorphism of
  \Algen{\tilde{R}} $g:B\to \tilde{B}$ induces a map
  $g_{*}:\HD(M,\psi)\to \HD(M,g\circ \psi), \Psi\mapsto (g\otimes
  \id_M) \circ \Psi$.\\
  Using the action of $K$ 
in this context, for every $\Psi\in\HD(M,\psi)$ and $a\in K$, we get an
  $(a.\psi)$-derivation $a.\Psi$.
\item Let $\psi_1,\psi_2\in \HD_K(R)$ and $\Psi_i\in \HD(M,\psi_i)$
  ($i=1,2$). Then as in Definition
  \ref{komp}, we have an automorphism $\Psi[[T]]$ of $R[[T]]\otimes
  M\isom M[[T]]$ and a product $\Psi_1 \cdot\Psi_2$, which
  is an element of $\HD(M,\psi_1 \psi_2)$.
\end{enumerate}
\end{rem}

Our next aim is to show that for a regular ring $R$ over a perfect
field $K$ every $R$-module that admits a higher connection is
projective (or - in
geometric terms - locally free). So we get the analogue of the
well-known fact in characteristic zero that a coherent sheaf equipped
with a holomorphic connection must be locally free, resp. the analogue
of the corresponding fact in the not necessarily commutative situation
given by Y.~Andr\'e in \cite{andre}, Cor. 2.5.2.2.

We first need the following lemma:

\begin{lem}\label{invertible_derivative}
Let $(R,\m)$ be a regular local ring of dimension $m$, let
 $t_1,\dots, t_m$ generate $\m$ and assume that $R$ is a localisation
  of a finitely generated $K$-algebra and that $R/\m$ is a finite
  separable extension of $K$.
Let $\phi_{t_j}\in\HD_K(R)$ ($j=1,\dots,m$) denote the higher
derivations with respect to $t_j$ (cf. Example \ref{phi_t_j} and
Thm. \ref{dif_formula}(c)).\\
Then for every $r\in R\setminus \{0\}$ there exist
$k_1,\dots,k_m\in\NN$ such that
\begin{enumerate}
\item[(1)]\ \centerline{ $\left( \phi_{t_m}^{(k_m)}\circ
        \dots\circ 
\phi_{t_1}^{(k_1)}\right)(r)\in R^{\times},\quad\quad\ $  }
\item[(2)] for all $l_1,\dots, l_m\in\NN$ with $l_j\leq k_j$ ($j=1,\dots,m$)
and $l_i<k_i$ for some $i\in\{1,\dots,m\}$,
$$\left( \phi_{t_m}^{(l_m)}\circ \dots\circ
\phi_{t_1}^{(l_1)}\right)(r)\not\in R^{\times}.$$
\end{enumerate}
\end{lem}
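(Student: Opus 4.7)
The plan is to track how the higher derivations $\phi_{t_j}^{(k)}$ interact with the $\m$-adic filtration on $R$. Since $R$ is regular local, Krull's intersection theorem gives $\bigcap_{n\geq 0}\m^n=0$, so every nonzero $r\in R$ has a well-defined $\m$-adic valuation $n:=\max\{s\mid r\in\m^s\}$, and its leading form lives in $\m^n/\m^{n+1}$. Using the isomorphism $\gr(R)\isom (R/\m)[t_1,\ldots,t_m]$ extracted from the proof of Theorem~\ref{dif_formula}(c), this leading form corresponds to a nonzero homogeneous polynomial $P$ of degree $n$. Two compatibility facts will do the bulk of the work: (A) $\phi_{t_j}^{(k)}(\m^n)\subseteq\m^{n-k}$ (with the convention $\m^s:=R$ for $s\leq 0$), and (B) the induced map on $\gr(R)$ is the divided derivative $\partial_j^{[k]}$ sending $t_1^{a_1}\cdots t_m^{a_m}$ to $\binom{a_j}{k}t_1^{a_1}\cdots t_j^{a_j-k}\cdots t_m^{a_m}$.

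Fact (A) will follow by induction on $n$ from the Leibniz rule for higher derivations: an element of $\m^n$ is a sum of products $x_1\cdots x_n$ with each $x_s\in\m$, and in the expansion $\phi_{t_j}^{(k)}(x_1\cdots x_n)=\sum_{i_1+\cdots+i_n=k}\phi_{t_j}^{(i_1)}(x_1)\cdots\phi_{t_j}^{(i_n)}(x_n)$, at most $k$ of the indices $i_s$ can be positive, so at least $n-k$ of the factors are untouched and still in $\m$. For Fact (B) I would pass to the completion $\hat R\isom F[[t_1,\ldots,t_m]]$ (with $F:=R/\m$), available by Cohen's structure theorem because $F$ is separable over $K$; the uniqueness of higher derivations on the separable extension $K\subset F$ (Proposition~\ref{unique_extension}) forces $\phi_{t_j}$ to fix $F$ pointwise, while $\phi_{t_j}(t_j)=t_j+T$ and $\phi_{t_j}(t_i)=t_i$ for $i\neq j$ by construction; expanding a lift $r=\sum c_{\vec a}t_1^{a_1}\cdots t_m^{a_m}\in\hat R$ and applying $\phi_{t_j}^{(k)}$ term by term then yields the divided-derivative formula on leading forms.

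Once (A) and (B) are in hand, part~(1) follows by choosing $\vec{k}=(k_1,\ldots,k_m)$ with $k_1+\cdots+k_m=n$ such that the coefficient $c_{\vec{k}}$ of $t_1^{k_1}\cdots t_m^{k_m}$ in $P$ is nonzero; this is possible because $P$ is a nonzero homogeneous polynomial of degree $n$. Iterating (A) and (B), $(\phi_{t_m}^{(k_m)}\circ\cdots\circ\phi_{t_1}^{(k_1)})(r)$ lies in $\m^0=R$ with image in $R/\m$ equal to $\partial_m^{[k_m]}\cdots\partial_1^{[k_1]}(P)$; the constraint $\sum a_j=\sum k_j=n$ together with $a_j\geq k_j$ (otherwise the binomial coefficient already vanishes in $\ZZ$) forces $\vec a=\vec k$, so this image simplifies to $c_{\vec{k}}\neq 0$, making the element a unit in the local ring $R$. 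For part~(2), the nonempty set $S\subseteq\NN^m$ of tuples satisfying~(1) admits a minimal element in the componentwise partial order by Dickson's lemma, and any such minimal element automatically satisfies~(2). The main technical obstacle will be the careful bookkeeping for Fact~(B), specifically the verification that $\phi_{t_j}$ really acts trivially on $F\subset\hat R$; this rigidity of higher derivations on formally étale extensions is what reduces the general computation to the concrete one on polynomials in the $t_j$.
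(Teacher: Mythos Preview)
Your approach is essentially the paper's: locate a nonzero coefficient in the degree-$n$ leading form of $r$ in $\gr(R)\isom (R/\m)[t_1,\dots,t_m]$ and take the corresponding exponent vector as $(k_1,\dots,k_m)$. The paper stays inside $R$---writing $r=\sum_{\betr{\vect{e}}=E}u_{\vect{e}}\,\vect{t}^{\vect{e}}$ with some $u_{\vect{f}}\in R^\times$ and applying the Leibniz rule to the monomials $\vect{t}^{\vect{e}}$ directly, thereby avoiding the passage to $\hat R$ and the delicate check that $\phi_{t_j}$ fixes the coefficient field---and it observes that this same $\vect{k}=\vect{f}$ already satisfies (2), since $\vect{l}\leq\vect{k}$ with $\vect{l}\ne\vect{k}$ forces $\betr{\vect{l}}<E$ and hence (by your Fact~(A)) the result lands in $\m$; so your Dickson's-lemma step, while correct, is not needed.
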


\begin{proof}
Let $r\in R\setminus \{0\}$. Choose $E\in\NN$ such that
$r\in\m^E$ and $r\not\in\m^{E+1}$. Then $r$ can (uniquely) be written
as
$$r=\sum_{\substack{\vect{e}=(e_1,\dots,e_m)\in\NN^m\\
\betr{\vect{e}}=E}} u_{\vect{e}} \vect{t}^{\vect{e}},$$
where $u_{\vect{e}}\in R$ and $u_{\vect{f}}\in R^{\times}$ for at
least one $\vect{f}=(f_1,\dots,f_m)$.\\
(We use the usual notation of multiindices:
$\betr{\vect{e}}=e_1+\dots+e_m$ and
$\vect{t}^{\vect{e}}=t_1^{e_1}\cdots t_m^{e_m}$.)
For arbitrary $\vect{l}=(l_1,\dots,l_m)\in\NN^m$ and
$\vect{e}\in\NN^m$ we have:
$$\left( \phi_{t_m}^{(l_m)}\circ \dots\circ
\phi_{t_1}^{(l_1)}\right)(\vect{t}^{\vect{e}})= 
\binom{e_1}{l_1}\cdots \binom{e_m}{l_m}\vect{t}^{\vect{e}-\vect{l}}=
\left\{ \begin{array}{cc} 
0 & \text{if } l_i>e_i \text{ for some }i, \\
1 & \text{if } l_j=e_j \text{ for all } j, \\
\in\m & \text{if } \betr{\vect{l}}<\betr{\vect{e}}.
\end{array}\right. $$
So if we choose $k_j=f_j$ ($j=1,\dots,m$), we get
\begin{eqnarray*}
&&\left( \phi_{t_m}^{(k_m)}\circ \dots \circ
\phi_{t_1}^{(k_1)}\right)(r)= \sum_{\betr{\vect{e}}=E} \left(
\phi_{t_m}^{(k_m)}\circ \dots\circ 
\phi_{t_1}^{(k_1)}\right)(u_{\vect{e}}\vect{t}^{\vect{e}})\\
&&\quad = \sum_{\betr{\vect{e}}=E} \sum_{\substack{0\leq l_j\leq k_j\\
j=1,\dots,m}} \left(\phi_{t_m}^{(k_m-l_m)}\circ \dots\circ 
\phi_{t_1}^{(k_1-l_1)}\right)(u_{\vect{e}})\left(\phi_{t_m}^{(l_m)}\circ
\dots\circ \phi_{t_1}^{(l_1)}\right)(\vect{t}^{\vect{e}}) \\
&&\quad \congr u_{\vect{f}}\cdot 1 \mod{\m}.
\end{eqnarray*}
So $\left( \phi_{t_m}^{(k_m)}\circ \dots\circ
\phi_{t_1}^{(k_1)}\right)(r)\in u_{\vect{f}}+\m \subset R^{\times}$,
and for all $\vect{l}\in\NN^m$ with $l_j\leq k_j$ ($j=1,\dots, m$) and
$l_i<k_i$ for some $i$, we have $\left( \phi_{t_m}^{(l_m)}\circ \dots\circ
\phi_{t_1}^{(l_1)}\right)(r)\in\m = R\setminus R^{\times}$, since
$\betr{\vect{l}}<E$.
\end{proof}

\begin{thm}\label{locally_free}
Let $(R,\m)$ be a regular local ring as in Lemma
\ref{invertible_derivative} and let $M$ be a finitely generated 
$R$-module with a higher connection $\nabla\in\HD(M,\d)$. Then $M$
is a free $R$-module.
\end{thm}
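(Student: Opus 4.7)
The plan is to show that any lift of an $R/\m$-basis of $M/\m M$ is an $R$-basis of $M$, deriving a contradiction from a hypothetical non-trivial relation by means of Lemma \ref{invertible_derivative}. Pick $m_1,\dots,m_n \in M$ whose residues form an $R/\m$-basis of $M/\m M$; by Nakayama's lemma they generate $M$, so it suffices to rule out a non-trivial relation $\sum_{i=1}^n r_i m_i = 0$. Set $E := \min_i v_{\m}(r_i)$ for $v_{\m}$ the $\m$-adic order, and after re-indexing assume $v_{\m}(r_1) = E$. If $E = 0$ then $r_1 \in R^\times$ and $m_1$ is an $R$-combination of $m_2,\dots,m_n$, contradicting the basis property modulo $\m$, so $E \geq 1$.

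Next, apply Lemma \ref{invertible_derivative} to $r_1$ to obtain $\vect{k} = (k_1, \dots, k_m) \in \NN^m$ (with $|\vect{k}| = E$, as one sees from the lemma's proof) satisfying $\Phi^{\vect{k}}(r_1) \in R^\times$ and $\Phi^{\vect{l}}(r_1) \in \m$ for every $\vect{l} \leq \vect{k}$ with $\vect{l} \neq \vect{k}$, where $\Phi^{\vect{a}} := \phi_{t_m}^{(a_m)} \circ \cdots \circ \phi_{t_1}^{(a_1)}$. Iterated Leibniz shows more generally that $\Phi^{\vect{a}}(\m^n) \subseteq \m^{n-|\vect{a}|}$. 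On the module side, set $D_j^{(k)} := \nabla_{\phi_{t_j}}^{(k)} : M \to M$; each satisfies the Leibniz rule $D_j^{(k)}(rx) = \sum_{a+b=k} \phi_{t_j}^{(a)}(r)\, D_j^{(b)}(x)$, and writing $D^{\vect{b}} := D_m^{(b_m)} \circ \cdots \circ D_1^{(b_1)}$, a direct induction on $m$ yields
$$D^{\vect{k}}(rx) = \sum_{\vect{a}+\vect{b}=\vect{k}} \Phi^{\vect{a}}(r)\, D^{\vect{b}}(x) \qquad (r \in R,\ x \in M).$$

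Apply $D^{\vect{k}}$ to the relation and expand each $D^{\vect{b}}(m_i) \in M$ as $\sum_j c_{ij}^{\vect{b}} m_j$, with $c_{ij}^{\vect{0}} = \delta_{ij}$ since $D^{\vect{0}} = \id_M$. Collecting coefficients of $m_j$ produces a new relation $\sum_j R_j m_j = 0$ with $R_j = \sum_{i,\,\vect{a}+\vect{b}=\vect{k}} \Phi^{\vect{a}}(r_i)\, c_{ij}^{\vect{b}}$. The claim is that $R_1 \in R^\times$: the $(\vect{b},i) = (\vect{0},1)$ contribution is $\Phi^{\vect{k}}(r_1) \in R^\times$; the other $\vect{b} = \vect{0}$ contributions vanish; and every $\vect{b} \neq \vect{0}$ term has $|\vect{a}| < E \leq v_{\m}(r_i)$, hence $\Phi^{\vect{a}}(r_i) \in \m^{v_{\m}(r_i) - |\vect{a}|} \subseteq \m$. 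Reducing the new relation modulo $\m M$ then displays $\bar m_1$ as a non-trivial $R/\m$-combination of $\bar m_2, \dots, \bar m_n$, contradicting independence of the residues.

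The main obstacle I anticipate is that, absent any integrability hypothesis on $\nabla$, the operators $D_j^{(k_j)}$ do not commute and one has no control over their commutators. Fortunately only the action of $D^{\vect{k}}$ on a single scalar-times-vector product is needed, and this is governed purely by the one-variable Leibniz rule in each factor independently, so the non-commutativity never intervenes.
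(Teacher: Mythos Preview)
Your proof is correct and follows essentially the same approach as the paper: both pick a minimal generating set, assume a nontrivial relation, choose $E$ as the common $\m$-adic order of the coefficients, apply Lemma~\ref{invertible_derivative} to one coefficient of exact order $E$, hit the relation with $D^{\vect{k}}=\nabla_{\phi_{t_m}}^{(k_m)}\circ\cdots\circ\nabla_{\phi_{t_1}}^{(k_1)}$, use the iterated Leibniz rule together with $\Phi^{\vect{a}}(\m^E)\subseteq\m^{E-|\vect{a}|}$, and reduce modulo $\m M$ to contradict Nakayama. The only cosmetic difference is that the paper works directly modulo $\m M$ rather than expanding $D^{\vect{b}}(m_i)$ in the generators, and it does not isolate the case $E=0$ or comment on non-commutativity of the $D_j^{(k_j)}$.
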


\begin{proof}
Let $\{x_1,\dots,x_n\}$ be a minimal set of generators of $M$. Assume
that $x_1,\dots,x_n$ are linearly dependent. Then there exists a
nontrivial relation $\sum_{i=1}^n r_i x_i=0$, with $r_i\in R$. Choose
$E\in\NN$ such that $r_j\in\m^E$ for all $j=1,\dots n$ and
$r_i\not\in\m^{E+1}$ for at least one $i$. Without loss of generality, let
$r_1\not\in\m^{E+1}$. Then choose $k_1,\dots,k_m\in\NN$ for $r_1$ as given
by the previous lemma. Then
\begin{eqnarray*}
0&=&\left( \nabla_{\phi_{t_m}}^{(k_m)}\circ \dots \circ
\nabla_{\phi_{t_1}}^{(k_1)}\right)\left(\sum_{i=1}^n r_i x_i\right)\\
&=& \sum_{i=1}^n \sum_{\substack{0\leq l_j\leq k_j\\
j=1,\dots,m}} \left(\phi_{t_m}^{(l_m)}\circ \dots\circ 
\phi_{t_1}^{(l_1)}\right)(r_i)
\left(\nabla_{\phi_{t_m}}^{(k_m-l_m)}\circ \dots \circ 
\nabla_{\phi_{t_1}}^{(k_1-l_1)}\right)(x_i) \\
&\congr&  \sum_{i=1}^n \left( \phi_{t_m}^{(k_m)}\circ \dots \circ
\phi_{t_1}^{(k_1)}\right)(r_i) \cdot x_i \mod{\m M}
\end{eqnarray*}
Since $ \left( \phi_{t_m}^{(k_m)}\circ \dots \circ
\phi_{t_1}^{(k_1)}\right)(r_1)\in R^{\times}$, we get $x_1\in
\gener{x_2,\dots,x_n} + \m M$, so $M=\gener{x_2,\dots,x_n} + \m M$ and
therefore by Nakayama's lemma $M=\gener{x_2,\dots,x_n}$, in
contradiction to the minimality of $\{x_1,\dots,x_n\}$.
So $x_1,\dots,x_n$ is a basis for $M$ and in particular $M$ is a free
$R$-module. 
\end{proof}

\begin{cor}\label{automatically_projective}
Let $K$ be a perfect field and let $R$ be a finitely generated
$K$-algebra which is a regular ring. Then every finitely generated
$R$-module $M$ with higher connection $\nabla$ is a projective
$R$-module.
\end{cor}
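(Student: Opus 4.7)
The plan is to reduce to the local case and invoke Theorem \ref{locally_free}. Since $R$ is a finitely generated algebra over the field $K$, it is Noetherian, and $M$ is finitely presented; hence $M$ is projective if and only if the localization $M_\m$ is free over $R_\m$ for every maximal ideal $\m\ideal R$. So it suffices to fix a maximal ideal $\m$ and show $M_\m$ is $R_\m$-free.

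For this I would first check that $R_\m$ satisfies the hypotheses of Theorem \ref{locally_free}: it is a regular local ring of some dimension $m$ (because $R$ is regular), it is by construction a localization of a finitely generated $K$-algebra, and its residue field $R_\m/\m R_\m = R/\m$ is a finitely generated $K$-algebra that is a field, so finite over $K$ by the Nullstellensatz, hence separable over $K$ since $K$ is perfect. Choose generators $t_1,\dots,t_m$ of $\m R_\m$. Then all assumptions of Theorem \ref{locally_free} are in place.

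Next I would transport the higher connection $\nabla\in\HD(M,\d_R)$ to a higher connection on $M_\m$ over $R_\m$. Since $R_\m$ is formally \'etale over $R$ (Example \ref{formally_etale_ex}(i)), Proposition \ref{isom_of_diff}(b) gives an isomorphism $Df\colon R_\m\otimes_R \Dif_{R/K} \xrightarrow{\isom} \Dif_{R_\m/K}$, and by Proposition \ref{unique_extension} the higher derivation $\d_R$ extends uniquely to $\d_{R_\m}\in\HD_K(R_\m,\Dif_{R_\m/K})$. Composing the base change $\id\otimes\nabla\colon M_\m = R_\m\otimes_R M \to R_\m\otimes_R \Dif_{R/K}\otimes_R M$ with $Df\otimes\id$ yields an additive map
$$\nabla_\m\colon M_\m \longrightarrow \Dif_{R_\m/K}\otimes_{R_\m} M_\m,$$
which is a $\d_{R_\m}$-derivation: the derivation rule $\nabla_\m(rx)=\d_{R_\m}(r)\nabla_\m(x)$ for $r\in R_\m$, $x\in M_\m$ follows because it holds for $r\in R$ by assumption on $\nabla$, both sides are continuous in $r$ (in the appropriate sense) and $R$ is dense in $R_\m$ with respect to the derivation structure (alternatively, one uses the universal extension property from Proposition \ref{unique_extension} applied to $\psi(r)\nabla(x)$-type expressions). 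Thus $\nabla_\m\in\HD(M_\m,\d_{R_\m})$.

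Now Theorem \ref{locally_free} applied to $(R_\m,\m R_\m)$ and the finitely generated $R_\m$-module $M_\m$ equipped with $\nabla_\m$ yields that $M_\m$ is free over $R_\m$. As this holds for every maximal ideal $\m$, we conclude that $M$ is projective. I expect the only mildly delicate step to be verifying the derivation property of the induced map $\nabla_\m$; everything else is a direct assembly of earlier results.
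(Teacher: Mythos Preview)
Your approach is exactly the paper's: localize at each maximal ideal, check the hypotheses of Theorem~\ref{locally_free}, and conclude projectivity from local freeness. The paper's own proof is two sentences and leaves all the checks you spell out implicit.

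One technical point: the map you write as ``$\id\otimes\nabla\colon R_\m\otimes_R M \to R_\m\otimes_R \Dif_{R/K}\otimes_R M$'' does not make sense as stated, because $\nabla$ is not $R$-linear (only $K$-linear), so you cannot tensor it over $R$. The clean way to extend $\nabla$ to $M_\m$ is the same as for localizations of $\theta$-rings later in the paper: set $\nabla_\m(m/s):=\d_{R_\m}(s)^{-1}\cdot\bigl((Df\otimes\id)\nabla(m)\bigr)$, which is well defined since $\d_{R_\m}(s)$ has invertible degree-zero part and since $ts'm=tsm'$ in $M$ implies $\d_{R_\m}(ts')\nabla(m)=\d_{R_\m}(ts)\nabla(m')$ after pushing forward. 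Your continuity/density remark is not the right justification here; there is no relevant topology on $R_\m$ in which $R$ is dense. With this correction the argument goes through.
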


\begin{proof}
By the previous theorem, the localisations of $M$ at every maximal
ideal of $R$ are free. Hence $M$ is projective.
\end{proof}

\section{Iterative Derivations and Iterative
  Connections}\label{iterative_theory} 

\begin{defn}
A higher derivation  $\phi\in \HD_K(R)$ is called an {\markdef iterative
  derivation}, if 
$$\forall\, i,j\in\NN:\quad \phi^{(i)}\circ \phi^{(j)}=
\binom{i+j}{i}\phi^{(i+j)}\ .$$
The set of iterative derivations on $R$ is denoted by $\ID_K(R)$.\\
Let $M$ be an $R$-module and $\phi\in\ID_K(R)$. A higher
$\phi$-derivation  $\Phi\in\HD(M,\phi)$ is called an {\markdef
  iterative {\mathdef $\phi$}-derivation}, if 
$$\forall\, i,j\in\NN:\quad \Phi^{(i)}\circ \Phi^{(j)}=
\binom{i+j}{i}\Phi^{(i+j)}\ .$$
The set of iterative $\phi$-derivations is denoted by
$\ID(M,\phi)$.
\end{defn}

\begin{exmp}\label{phi_t_iterative}
If $R$ is the polynomial ring $K[t_1,\dots, t_m]$ or a \fe{} extension of
that ring, the higher derivations
$\phi_{t_j}$ with respect to $t_j$ (cf. Example \ref{phi_t_j}) are
iterative derivations. (For $K[t_1,\dots, t_m]$ this is obvious and
for extensions, it follows from Lemma \ref{id_structures}.)
\end{exmp}

\begin{rem}
Note that there is no sense in defining an iterative derivation
$\Phi\in \HD(M,\psi)$ for a non-iterative higher derivation
$\psi\in\HD_K(R)$. This is seen by using the characterisation of the
iterative derivations in Lemma \ref{it_eig} and Lemma
\ref{it_eig_modul}: For all $a,b\in K^{\sep}$, $(a.\Phi)(b.\Phi)$ is
an $(a.\psi)(b.\psi)$-derivation, whereas $(a+b).\Phi$ is an
$(a+b).\psi$-derivation.
\end{rem}

\begin{lem}\label{it_eig} {\bf (Characterisation of iterative derivations)}\\
Let $\psi\in \HD_K(R)$ be a higher derivation. Then the following
conditions are equivalent:
\begin{enumerate}
\item[{\rm (i)}] $\psi$ is iterative,
\item[{\rm (ii)}] $\tilde{\psi}\circ \d_{\Dif} = \psi[[T]]\circ
\tilde{\psi}$, $\quad$ (see Thm. \ref{diff_exists} for the definition
of $\tilde{\psi}$.)
\item[{\rm (iii)}] For all $a\in K$: $\tilde{\psi}\circ (a.\d_{\Dif}) =
  (a.\psi[[T]])\circ \tilde{\psi}$.
\end{enumerate}
If $K$ is an infinite field, then this is also equivalent to
\begin{enumerate} 
\item[{\rm (iv)}] For all  $a,b\in K$:  $(a.\psi)(b.\psi)= (a+b).\psi$,
\end{enumerate}
whereas for arbitrary $K$ the conditions {\rm (i)}-{\rm (iii)} only
imply condition {\rm (iv)}. 
\end{lem}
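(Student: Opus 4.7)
The plan is to reduce every one of the equivalences to a direct calculation on the topological generators $\d^{(k)}r$ of $\Dif$, using three ingredients that are already in hand: from the construction of $\tilde\psi$ in Theorem~\ref{diff_exists}, $\tilde\psi(\d^{(k)}r)=\psi^{(k)}(r)$; from Theorem~\ref{d_Dif}, $(a.\d_{\Dif})(\d^{(i)}r)=\sum_{j\geq 0}a^{j}\binom{i+j}{j}\d^{(i+j)}r$; and from Section~\ref{general_notation}, the $K$-action satisfies $(a.\psi)^{(k)}=a^{k}\psi^{(k)}$, whence also $a.\psi[[T]]=(a.\psi)[[T]]$ as self-maps of the \cga{} $R[[T]]$. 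Since the graded subalgebra $\bigoplus_k(\Dif)_k$ is dense in $\Dif$ and all the maps in (ii) and (iii) are continuous $R$-algebra homomorphisms, it suffices to verify each identity on a generator $\d^{(k)}r$.

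For (i)$\Leftrightarrow$(ii) I would evaluate both sides on $\d^{(k)}r$. Writing $\psi^{(k)}(r)=\partial^{(k)}(r)T^{k}$ and unfolding, the left-hand side $\tilde\psi(\d_{\Dif}\d^{(k)}r)$ becomes $\sum_{j\geq 0}\binom{k+j}{j}\partial^{(k+j)}(r)T^{k+j}$, while the right-hand side $\psi[[T]](\partial^{(k)}(r)T^{k})$ becomes $\sum_{j\geq 0}\partial^{(j)}(\partial^{(k)}(r))T^{k+j}$. Matching coefficients of $T^{k+j}$ for every $k,j,r$ is exactly the iterativity identity $\psi^{(j)}\circ\psi^{(k)}=\binom{k+j}{j}\psi^{(k+j)}$. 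The equivalence (ii)$\Leftrightarrow$(iii) is then immediate in the direction $\Leftarrow$ (take $a=1$), and in the direction $\Rightarrow$ the same computation repeats with an extra factor $a^{j}$ on both sides: the identity to be matched is $a^{j}\binom{k+j}{j}\partial^{(k+j)}(r)=a^{j}\partial^{(j)}\partial^{(k)}(r)$, which follows termwise from (i).

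For (i)$\Rightarrow$(iv) I would compute directly, using Definition~\ref{komp}:
\[
(a.\psi)(b.\psi)(r)=(a.\psi)[[T]]\bigl(\textstyle\sum_{j}b^{j}\partial^{(j)}(r)T^{j}\bigr)=\sum_{j,k}a^{k}b^{j}\,\partial^{(k)}\partial^{(j)}(r)\,T^{k+j}.
\]
Applying (i) to the inner composition and then collecting by $n=k+j$ via the binomial theorem yields $\sum_{n}(a+b)^{n}\partial^{(n)}(r)T^{n}=((a+b).\psi)(r)$. Conversely, if $K$ is infinite, the identity in (iv) forces, at each $T^{n}$, the equality of two polynomials in $a,b\in K$; infinitude of $K$ lets us match coefficients of $a^{k}b^{j}$ to recover $\partial^{(k)}\partial^{(j)}(r)=\binom{k+j}{k}\partial^{(k+j)}(r)$, i.e.\ (i). The only real subtlety is this last step—the polynomial-identity argument genuinely needs $|K|=\infty$, which is exactly why the equivalence with (iv) is claimed only in that case; everything else is bookkeeping of $T$-degrees.
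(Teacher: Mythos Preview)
Your proof is correct and follows essentially the same route as the paper's: evaluate both sides of (ii)/(iii) on the generators $\d^{(k)}r$, match $T$-coefficients to obtain the iterativity relation, and handle (iv) by the direct product computation together with a polynomial-identity argument when $K$ is infinite. One small slip: the maps $\tilde\psi\circ\d_{\Dif}$ and $\psi[[T]]\circ\tilde\psi$ are continuous $K$-algebra homomorphisms, not $R$-algebra homomorphisms (neither $\d_{\Dif}$ nor $\psi[[T]]$ is $R$-linear), but this does no harm since the $\d^{(k)}r$ with $k\geq 0$ already generate $\Dif$ topologically as a $K$-algebra and your coefficient comparison covers $k=0$ as well.
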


\begin{proof}
For $a\in K$, $r\in R$ and $i\in\NN$ we have:
$$\tilde{\psi}\circ (a.\d_{\Dif})(\d^{(i)}r)=
\tilde{\psi}\left(\sum_{j=0}^\infty a^j \binom{i+j}{j}
\d^{(i+j)}r\right) 
= \sum_{j=0}^\infty a^j \binom{i+j}{j} \psi^{(i+j)}(r)T^{i+j}$$
and
$$ (a.\psi[[T]])\circ \tilde{\psi}(\d^{(i)}r)=
 a.\psi[[T]]\left(\psi^{(i)}(r)T^i\right) 
= \sum_{j=0}^\infty a^j \psi^{(j)}(\psi^{(i)}(r))T^{i+j}.$$
So by comparing the coefficients of $T^{i+j}$ one sees that condition (iii) is fulfilled if and only if $\tilde{\psi}\circ
(a.\d_{\Dif}) = (a.\psi[[T]])\circ \tilde{\psi}$ holds for arbitrary
$a\in K\setminus \{ 0\}$ (e.g. $a=1$, i.\,e. condition (ii)). Moreover,
 this is fulfilled if and only if
for all $i,j\in \NN$ we
have $\psi^{(j)}\circ \psi^{(i)}=\binom{i+j}{j} \psi^{(i+j)}$,
i.\,e. $\psi$ is iterative.\\
Furthermore, we get for all  $a,b\in K$:
$$\left((a.\psi)(b.\psi)\right)^{(k)}=\sum_{i+j=k}(a.\psi)^{(i)}\circ
(b.\psi)^{(j)} 
= \sum_{i+j=k} a^i b^j \psi^{(i)}\circ \psi^{(j)},$$
since $b\in K$, and 
$$((a+b).\psi)^{(k)} =(a+b)^k \psi^{(k)}
 = \sum_{i+j=k} a^i b^j \binom{i+j}{i} \psi^{(i+j)}.$$
So if $\psi$ is iterative, condition (iv) is fulfilled. If $\#K=\infty$,
we obtain from condition (iv) that $\psi$ is iterative
by comparing the coefficients of $a^i$. 
\end{proof}

If $\#K<\infty$, the following example shows
that condition (iv) doesn't imply the others.

\begin{exmp}
Condition (iv) is in fact weaker if $K$ is finite. If for example
$K=\FF_q$ and $R=\FF_q[t]$, then $\psi\in \HD_K(R)$ defined by
$\psi(t)= t+1\cdot T^{2q-1}$ is not iterative, since
$$(2q-1) \psi^{(2q-1)}(t)=2q-1\ne 0 =
\psi^{(2q-2)}\left(\psi^{(1)}(t)\right).$$ 
On the other hand,
for all $a\in \FF_q$ we have $a^{2q-1}=a$ and so 
\begin{eqnarray*}
\bigl((a.\psi)(b.\psi)\bigr)^{(k)}(t)&=& \sum_{i+j=k}
a^ib^j\psi^{(i)}(\psi^{(j)}(t)) = a^k
\psi^{(k)}(t)+a^{k-2q+1}b^{2q-1}\psi^{(k-2q+1)}(1)\\ 
&=&
\left\{ \begin{array}{cc}
t & k=0 \\  a^{2q-1}+b^{2q-1}=(a+b)^{2q-1} & k=2q-1\\ 0 & \text{otherwise}
\end{array} \right\} = \bigl((a+b).\psi\bigr)^{(k)}(t)
 \end{eqnarray*}
for all $a,b\in K=\FF_q$.
\end{exmp}

\begin{rem}\label{iteration_trick}
Condition {\rm (iv)} is very useful for calculations -- even if $K$ is
finite. If one has to
show that some higher derivation $\psi\in\HD_K(R)$ is iterative, one
can often use the following trick:\\
Let $\tR:=K^{\sep}\otimes_{\bar{K}\cap R} R$ be the maximal separable
extension of $R$ by constants. Then by Proposition
\ref{unique_extension} the higher derivation $\psi$ uniquely extends
to a higher derivation $\psi_e \in \HD_K(\tR)=\HD_{K^\sep}
(\tR)$. Since $\# 
K^{\sep}=\infty$, we can use condition (iv) to show that $\psi_e$ is
iterative and therefore $\psi$ is iterative.\\
Whenever it will be shown that for all $a,b\in K^{\sep}$,
$(a.\psi)(b.\psi)=(a+b).\psi$, this trick will be used, although we
won't mention it explicitly.
\end{rem}

\begin{lem}\label{it_eig_modul}
Let $\phi\in \ID_K(R)$ be an iterative derivation and
 $\Psi\in \HD(M,\phi)$ be a $\phi$-derivation.
Then $\Psi$ is iterative if and only if for all $a,b\in K^{\sep}$ the
 identity  $(a.\Psi)(b.\Psi)= (a+b).\Psi$ holds. 
\end{lem}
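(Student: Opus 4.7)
The plan is to transfer the equivalence (i) $\iff$ (iv) of Lemma \ref{it_eig} to the module-valued setting. The first step is to expand both sides of the proposed identity componentwise. Using the definitions of the $K$-action from Section \ref{general_notation} and of the product from Remark \ref{komp_modul}(ii), for $a,b\in K$ and $k\in\NN$ one obtains
\[
\bigl((a.\Psi)(b.\Psi)\bigr)^{(k)} = \sum_{i+j=k} a^i b^j\,\Psi^{(i)}\circ\Psi^{(j)}, \qquad \bigl((a+b).\Psi\bigr)^{(k)} = \sum_{i+j=k}\binom{i+j}{i}a^i b^j\,\Psi^{(i+j)},
\]
the second expansion being the binomial theorem applied to $(a+b)^k\Psi^{(k)}$. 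Since $\phi$ is iterative, Lemma \ref{it_eig}(iv) gives $(a.\phi)(b.\phi)=(a+b).\phi$, so by Remark \ref{komp_modul}(ii) both displayed sides lie in $\HD(M,(a+b).\phi)$ and the comparison is meaningful.

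For the ``only if'' direction, iterativity of $\Psi$ gives $\Psi^{(i)}\circ\Psi^{(j)} = \binom{i+j}{i}\Psi^{(i+j)}$, so the two sums coincide for all $a,b\in K$. To upgrade to $a,b\in K^{\sep}$, I would invoke the trick from Remark \ref{iteration_trick}: let $\tilde R := K^{\sep}\otimes_{\bar K\cap R}R$ (formally étale over $R$ by Example \ref{formally_etale_ex}), extend $\phi$ uniquely to an iterative derivation $\tilde\phi\in\ID_{K^{\sep}}(\tilde R)$ via Proposition \ref{unique_extension}, and extend $\Psi$ componentwise to an iterative $\tilde\phi$-derivation $\tilde\Psi$ on $\tilde M := \tilde R\otimes_R M$. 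The identity then follows by the same componentwise calculation, now carried out over $K^{\sep}$.

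For the converse, suppose the identity holds for all $a,b\in K^{\sep}$. After the same scalar extension, it becomes a polynomial equation in the indeterminates $a,b$ with coefficients among the additive maps $\tilde M \to \Dif_{\tilde R/K^{\sep}}\otimes_{\tilde R}\tilde M$. Since $K^{\sep}$ is infinite, I may compare coefficients of $a^i b^j$ to obtain $\tilde\Psi^{(i)}\circ\tilde\Psi^{(j)} = \binom{i+j}{i}\tilde\Psi^{(i+j)}$, and restricting back to the submodule $M\subset\tilde M$ yields iterativity of $\Psi$.

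The main technical point I expect to grapple with is the extension step itself: one needs a module-level analogue of Proposition \ref{unique_extension}, stating that an element $\Psi\in\HD(M,\phi)$ lifts uniquely to $\tilde\Psi\in\HD(\tilde M,\tilde\phi)$ along the formally étale extension $R\to\tilde R$, and that iterativity is both preserved and reflected along the inclusion $M\hookrightarrow\tilde M$. This should follow from the universal property of $\Dif_{R/K}$ together with the fact that iterativity is a componentwise condition on the $\phi$-semilinear maps $\Psi^{(k)}$, but it needs to be spelled out.
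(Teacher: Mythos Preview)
Your approach is correct and is exactly what the paper does: its proof reads ``Analogous to the proof of Lemma~\ref{it_eig}'', meaning the componentwise expansion you wrote out followed by coefficient comparison over the infinite field $K^{\sep}$ via the trick of Remark~\ref{iteration_trick}. Two small corrections: since $\phi\in\HD_K(R)=\HD_K(R,R[[T]])$, the $\phi$-derivation $\Psi$ takes values in $R[[T]]\otimes_R M\cong M[[T]]$, not in $\Dif\otimes_R M$ (you conflated $\phi$-derivations with higher connections); and the extension step you worry about is much simpler than a module analogue of Proposition~\ref{unique_extension}---since $\tilde R=K^{\sep}\otimes_{\bar K\cap R}R$ is a \emph{constant} extension, one sets $\tilde M:=\tilde R\otimes_R M$ and $\tilde\Psi:=\id_{K^{\sep}}\otimes\Psi$ directly, whence $\tilde\Psi^{(k)}$ is the $K^{\sep}$-linear extension of $\Psi^{(k)}$ and iterativity is trivially preserved and reflected.
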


\begin{proof}
Analogous to the proof of Lemma \ref{it_eig}.
\end{proof}

The next lemma states some structural properties of $\ID_K(R)$.

\begin{lem}\label{id_structures}
\begin{enumerate}
\item If two iterative derivations $\phi_1,\phi_2\in\ID_K(R)$ commute,
  i.\,e. they satisfy
$\phi_1^{(i)}\circ \phi_2^{(j)}=\phi_2^{(j)}\circ \phi_1^{(i)}$ for all
  $i,j\in\NN$, then $\phi_1\phi_2$ is again an
  iterative derivation.
\item $\ID_K(R)$ is stable under the action of $K$.
\item If $\tR\supset R$ is a ring extension such that every higher
  derivation on $R$ uniquely extends to a higher derivation on $\tR$
  (e.\,g. if $\tR$ is \fe{} over $R$), then  the
  extension $\phi_e\in\HD_K(\tR)$ of an iterative derivation
  $\phi\in\ID_K(R)$ is again iterative.
\end{enumerate}
\end{lem}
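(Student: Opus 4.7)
The unifying strategy is to verify condition (iv) of Lemma \ref{it_eig} --- that $(a.\psi)(b.\psi) = (a+b).\psi$ --- for each constructed higher derivation and then invoke the separable-closure trick of Remark \ref{iteration_trick} to promote it to iterativity. Part (ii) needs no such finesse: condition (i) can be checked directly,
$$(a.\phi)^{(i)} \circ (a.\phi)^{(j)} = a^{i+j}\bigl(\phi^{(i)} \circ \phi^{(j)}\bigr) = \binom{i+j}{i} a^{i+j}\phi^{(i+j)} = \binom{i+j}{i}(a.\phi)^{(i+j)},$$
using only iterativity of $\phi$ and the defining formula $(a.\phi)^{(k)} = a^k \phi^{(k)}$.

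For part (i), I would first record the compatibility $a.(\phi_1 \phi_2) = (a.\phi_1)(a.\phi_2)$, obtained by expanding $(\phi_1\phi_2)^{(k)} = \sum_{i+j=k} \phi_1^{(i)} \circ \phi_2^{(j)}$ and distributing $a^{i+j} = a^i a^j$ over each summand; this is the multiplicative shadow of the identity $a.(h \circ g) = (a.h) \circ (a.g)$ already noted in Section \ref{general_notation}. The componentwise commutation of $\phi_1$ and $\phi_2$ is preserved by rescaling, so
$$(a.\phi_1)(a.\phi_2)(b.\phi_1)(b.\phi_2) = (a.\phi_1)(b.\phi_1)(a.\phi_2)(b.\phi_2),$$
and applying condition (iv) for each of $\phi_1, \phi_2$ individually collapses the right hand side to $((a+b).\phi_1)((a+b).\phi_2) = (a+b).(\phi_1\phi_2)$. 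Combined with the compatibility at the very start, this is condition (iv) for $\phi_1 \phi_2$, and the trick finishes the job.

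For part (iii), the two higher derivations $(a.\phi_e)(b.\phi_e)$ and $(a+b).\phi_e$ both lie in $\HD_K(\tR, \tR[[T]])$, and both restrict to higher derivations on $R$. The $k$-th component of each, evaluated on $r \in R$, reduces to $(a+b)^k \phi^{(k)}(r)$: the first via $\phi_e^{(i)}|_R = \phi^{(i)}$ together with iterativity of $\phi$ (using that $\phi^{(j)}(r) \in R$, so no genuine extension is invoked) and the binomial theorem; the second by direct inspection. Hence the two higher derivations agree on $R$, and the uniqueness-of-extension hypothesis forces equality on $\tR$. The main obstacle is the finite-field case, where this yields condition (iv) only for $a, b \in K$; I would dispatch it by the trick of Remark \ref{iteration_trick}, passing to $\hat{\tR} := K^{\sep} \otimes_{\bar{K} \cap \tR} \tR$ (a formally \'etale constant extension, to which $\phi_e$ extends uniquely), rerunning the restriction-plus-uniqueness argument along $R \hookrightarrow \hat{\tR}$, and concluding iterativity of $\phi_e$ from Lemma \ref{it_eig} over the infinite field $K^{\sep}$.
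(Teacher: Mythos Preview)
Your proposal is correct and follows essentially the same approach as the paper, which dispatches all three parts with the single line ``A short calculation shows that all occurring higher derivations satisfy condition (iv) of Lemma~\ref{it_eig}, hence are iterative.'' You supply the details of that short calculation (including the use of Remark~\ref{iteration_trick}), and for part~(ii) you take the slightly more direct route of checking condition~(i) rather than~(iv), which is perfectly fine.
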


\begin{proof}
A short calculation shows that all occurring higher
derivations satisfy condition (iv) of Lemma \ref{it_eig}, hence are iterative.
\end{proof}

We have already seen that $\d_{\Dif}$ satisfies the condition
$\d_{\Dif}^{(i)}\circ \d_{\Dif}^{(j)}=\binom{i+j}{i}\d_{\Dif}^{(i+j)}$
and that for iterative derivations $\phi\in\ID_K(R)$ we have the ``same''
condition $\phi^{(i)}\circ \phi^{(j)}=\binom{i+j}{i}
\phi^{(i+j)}$. This motivates the following definition of an iterative
connection.

\begin{defn}
A higher connection $\nabla$ on $M$ is called an {\markdef iterative
  connection} if the identity 
$${}_\Dif\!\nabla^{(i)}\circ \difnabla^{(j)}=\binom{i+j}{i}
\difnabla^{(i+j)}$$ 
holds for all $i,j\in\NN$. (As defined in Section \ref{general_notation} for the general case,
$\difnabla^{(i)}$ denotes the part of $\difnabla$ that ``increases
degrees by $i$''.)\\
An iterative connection $\nabla$ on $M$ is called an {\markdef
integrable iterative connection} 
if for all higher derivations
$\psi_1,\psi_2\in\HD_K(R)$ we have
$\nabla_{\psi_1\psi_2}=\nabla_{\psi_1} \nabla_{\psi_2}$.
\end{defn}

The notion of an integrable iterative connection is motivated by the
correspondence to the integrable (ordinary) connections 
in characteristic zero (cf. Section \ref{char_zero}).

\begin{thm}
Let $\nabla$ be a higher connection on $M$. Then:
\begin{enumerate}
\item $\nabla$ is iterative if and only if for all $a,b\in K^{\sep}$:
$a.\difnabla\circ b.\difnabla=(a+b).\difnabla$ and if
and only if for all $a,b\in K^{\sep}$: $a.\difnabla\circ b.\nabla=(a+b).\nabla$.
\item If $\nabla$ is iterative, then for all iterative derivations
$\phi\in\ID_K(R)$ the $\phi$-derivation $\nabla_\phi$ is again
iterative. If $R$ has enough iterative derivations, \ie{} if for every nonzero $\omega\in \Dif_{R/K}$ there exists
an iterative derivation $\phi\in\ID_K(R)$ such that
$\tilde{\phi}(\omega)\ne 0$, then the converse
is also true (where $\tilde{\phi}:\Dif_{R/K}\to R[[T]]$ is the unique
homomorphism of cgas satisfying $\phi=\tilde{\phi}\circ \d$, cf. Thm. \ref{diff_exists}).
\end{enumerate}
\end{thm}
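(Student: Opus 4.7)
The plan is to mirror the coefficient-comparison strategy of Lemmas \ref{it_eig} and \ref{it_eig_modul}, translating iterativity into identities involving the $K$-action $a\mapsto a.(-)$ and exploiting the multiplicative semilinearity built into $a.\d_{\Dif}$ and $a.\difnabla$.

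For part (i), label the three conditions (A), (B), (C). For (A)$\Leftrightarrow$(B), the $k$-th homogeneous component of $a.\difnabla\circ b.\difnabla$ is $\sum_{i+j=k}a^ib^j\,\difnabla^{(i)}\circ\difnabla^{(j)}$, while $((a+b).\difnabla)^{(k)}=(a+b)^k\difnabla^{(k)}=\sum_{i+j=k}\binom{i+j}{i}a^ib^j\,\difnabla^{(i+j)}$; since $K^{\sep}$ is infinite, varying $a,b$ and comparing the coefficients of $a^ib^j$ yields the iterativity identity (the finite-$K$ subtlety is absorbed by the trick of Remark \ref{iteration_trick}). For (B)$\Rightarrow$(C), restrict (B) to $1\otimes m\in\Dif\otimes_R M$; because $b.\d_{\Dif}$ extends $b.\d_R$ and $b.\d_R(1)=1$, one has $b.\difnabla(1\otimes m)=1\cdot b.\nabla(m)=b.\nabla(m)$, so (B) specializes to (C). For (C)$\Rightarrow$(B), I first establish the semilinearity $a.\difnabla(\omega\cdot\xi)=a.\d_{\Dif}(\omega)\cdot a.\difnabla(\xi)$ for $\omega\in\Dif$, $\xi\in\Dif\otimes_R M$, which is immediate from the definition of $a.\difnabla$ and the fact that $a.\d_{\Dif}$ is a $K$-algebra homomorphism (Theorem \ref{d_Dif}). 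Writing $\omega\otimes m=\omega\cdot(1\otimes m)$ and applying this twice, $a.\difnabla(b.\difnabla(\omega\otimes m))$ becomes $(a.\d_{\Dif}\circ b.\d_{\Dif})(\omega)\cdot a.\difnabla(b.\nabla(m))$, which by Theorem \ref{d_Dif} and (C) equals $(a+b).\d_{\Dif}(\omega)\cdot(a+b).\nabla(m)=(a+b).\difnabla(\omega\otimes m)$.

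For part (ii), forward direction: Lemma \ref{it_eig_modul} reduces iterativity of $\nabla_\phi$ to $(a.\nabla_\phi)(b.\nabla_\phi)=(a+b).\nabla_\phi$ for all $a,b\in K^{\sep}$. Apply $\tilde\phi\otimes\id_M$ to the equality $a.\difnabla\circ b.\nabla=(a+b).\nabla$ guaranteed by part (i)(C). The right-hand side becomes $(a+b).\nabla_\phi(m)$ because $\tilde\phi\otimes\id$ intertwines the $K$-action. The left-hand side is expanded using the semilinearity of $a.\difnabla$ proved above, together with the characterization $\tilde\phi\circ(a.\d_{\Dif})=(a.\phi)[[T]]\circ\tilde\phi$ from Lemma \ref{it_eig}(iii) (which holds since $\phi$ is iterative); a short calculation, together with the description in Remark \ref{komp_modul}(ii) of the product $(a.\nabla_\phi)\cdot(b.\nabla_\phi)=(a.\nabla_\phi)[[T]]\circ(b.\nabla_\phi)$, identifies the image with $\bigl((a.\nabla_\phi)\cdot(b.\nabla_\phi)\bigr)(m)$.

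For the converse, set $\Delta_{a,b}(m):=a.\difnabla(b.\nabla(m))-(a+b).\nabla(m)\in\Dif\otimes_R M$; by part (i)(C) it suffices to show $\Delta_{a,b}=0$ for all $a,b\in K^{\sep}$ and $m\in M$. Reversing the computation of the forward direction shows $(\tilde\phi\otimes\id_M)(\Delta_{a,b}(m))=0$ for every $\phi\in\ID_K(R)$. To promote this to $\Delta_{a,b}(m)=0$, note that by Corollary \ref{automatically_projective}, $M$ is projective, hence a direct summand of a finitely generated free $R$-module $F$; the induced split injection $\Dif\otimes_R M\hookrightarrow\Dif\otimes_R F$ is respected by each $\tilde\phi\otimes\id$, so after choosing a basis of $F$ the vanishing reduces coordinate-wise to the assumption that $\{\tilde\phi:\phi\in\ID_K(R)\}$ separates points of $\Dif$. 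The main obstacle is precisely this last step: the hypothesis of enough iterative derivations concerns only $\Dif$, while the required separation statement lives on $\Dif\otimes_R M$, and projectivity of $M$ (available thanks to Corollary \ref{automatically_projective}) is what makes the reduction to the coordinate-wise statement legitimate.
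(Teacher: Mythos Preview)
Your proposal is correct and follows essentially the same strategy as the paper: part (i) is done by the coefficient comparison the paper alludes to (``similar to Lemma \ref{it_eig}''), and part (ii) in both directions rests on the key identity $(\tilde{\phi}\otimes \id_M)\circ (a.\difnabla)\circ (b.\nabla) = (a.\nabla_\phi)(b.\nabla_\phi)$, which the paper establishes via a commutative diagram and you obtain by the equivalent direct semilinearity computation. One point worth noting: for the converse in part (ii) the paper simply asserts that ``enough iterative derivations'' lets one conclude $(a.\difnabla)\circ (b.\nabla)= (a+b).\nabla$, whereas you correctly observe that the hypothesis is stated only for $\Dif$, not for $\Dif\otimes_R M$, and you bridge this gap by invoking projectivity of $M$ (Corollary \ref{automatically_projective}) to reduce to the free case and argue coordinate-wise---this is a genuine detail the paper leaves implicit.
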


\begin{proof}
The first statement is seen by a calculation similar to the one in Lemma
\ref{it_eig}. 
For proving the second part, let $\phi\in\ID_K(R)$ and consider the
following diagram:

\centerline{\xymatrix@+20pt{
M \ar[r]^-{b.\nabla} \ar[d]_{b.\nabla_{\phi}} & {\Dif} \otimes M
\ar[r]^{a.\d_\Dif \otimes a.\nabla} \ar[d]_{\left((a.\phi)[[T]]\circ
\tilde{\phi}\right) \otimes a.\nabla_{\phi}}
\ar[dr]|{\left(\tilde{\phi}\circ a.\d_{\Dif}\right)\otimes
a.\nabla_{\phi}}
\ar@/^2pc/[rr]^{a.\difnabla} & {\Dif}\otimes_{a.\d(R)} ( {\Dif}
\otimes M) \ar[r]^-{\mu \otimes \id_M} \ar[d]^{\tilde{\phi}\otimes
\tilde{\phi}\otimes \id_M} &  {\Dif} \otimes M
\ar[d]^{\tilde{\phi}\otimes \id_M} \\
R[[T]]\otimes M \ar[r]^<(0.22){(a.\phi)[[T]]\otimes a.\nabla_{\!\phi}} 
\ar@/_2pc/[rrr]_{(a.\nabla_{\phi})[[T]]} & 
R[[T]]\otimes_{a.\phi(R)} M[[T]] \ar@{=}[r] &
R[[T]]\otimes_{a.\phi(R)} M[[T]] \ar[r]^-{\mu\otimes \id_M}
 &  R[[T]]\otimes M
}}

\noindent The square on the left commutes, since
$$b.\nabla_{\phi}=b.\left((\tilde{\phi}\otimes \id_M)\circ
\nabla\right) = (\tilde{\phi}\otimes \id_M)\circ (b.\nabla) .$$
The lower triangle commutes by Lemma \ref{it_eig}, since $\phi$
is iterative. The upper triangle commutes, since
$a.\nabla_{\phi}=(\tilde{\phi}\otimes \id_M)\circ (a.\nabla)$, and the
square on the right commutes, since $\tilde{\phi}$ is a
homomorphism of algebras. Furthermore the top of the diagram commutes
by definition of $a.\difnabla$ and the bottom commutes, since
$a.\nabla_\phi$ is a $(a.\phi)$-derivation.\\
So the whole diagram commutes and we obtain
$$(\tilde{\phi}\otimes \id_M)\circ (a.\difnabla)\circ (b.\nabla)
= (a.\nabla_\phi)[[T]]\circ (b.\nabla_\phi)=(a.\nabla_\phi)(b.\nabla_\phi)$$
for all iterative derivations $\phi\in\ID_K(R)$.\\
If $\nabla$ is iterative, we get 
$$(a+b).\nabla_\phi=(\tilde{\phi}\otimes \id_M)\circ (a+b).\nabla
=(\tilde{\phi}\otimes \id_M)\circ (a.\difnabla)\circ (b.\nabla)
=(a.\nabla_\phi)(b.\nabla_\phi)$$
by the first part of this theorem and so by Lemma \ref{it_eig_modul},
$\nabla_\phi$ is iterative.\\
In turn, from the commuting diagram we see that if $\nabla_\phi$ is
iterative for an iterative derivation $\phi\in\ID_K(R)$, we get
$$(\tilde{\phi}\otimes \id_M)\circ (a.\difnabla)\circ (b.\nabla)
=(\tilde{\phi}\otimes \id_M)\circ (a+b).\nabla$$
for this $\phi$. So if $R$ has enough iterative derivations and
$\nabla_\phi$ is iterative for all $\phi\in\ID_K(R)$ we obtain
$(a.\difnabla)\circ (b.\nabla)= (a+b).\nabla$, \ie{} $\nabla$ is
iterative.
\end{proof}

\section{The Tannakian Category of Modules with Iterative
  Connection}\label{categorial}

In this section we show -- assuming a slight restriction on the
ring $R$ -- that the finitely generated projective modules
(i.e. locally free of finite rank) with higher
connection form an abelian category and that the modules with
 iterative resp. integrable iterative
connection form full subcategories. Furthermore all these categories
form tensor categories over $K$ (in the sense of \cite{deligne2}); in
fact they even form Tannakian categories.

\begin{notation}
From now on let $R$ be a
regular ring over $K$ which is the localisation of a
finitely generated $K$-algebra, such that $K$ is algebraically closed
in $R$. Also keep in mind that we assumed $R$ to be an integral domain
and $K$ to be a perfect field.\\
Furthermore, in the following a pair $(M,\nabla)$ always denotes a
finitely generated $R$-module $M$ together with a higher connection
$\nabla:M\to \Dif \otimes_R M$, even if ``finitely generated'' is not
mentioned. We recall the fact given in Corollary
\ref{automatically_projective}, namely that such a module is always
projective. 
\end{notation}

\begin{defn}
Let $(M_1,\nabla_1)$ and $(M_2,\nabla_2)$ be $R$-modules with higher
connection. Then we call $f\in \Hom_R(M_1,M_2)$ a {\markdef morphism
  of modules with higher connection}, or a morphism for short, if
the diagram 

\centerline{
 \xymatrix{ M_1\ar[r]^f \ar[d]^{\nabla_1} & M_2 \ar[d]^{\nabla_2}\\
\Dif \otimes_R M_1 \ar[r]^{\id_{\Dif} \otimes f} & \Dif \otimes_R M_2}}
\noindent commutes.
The set of all morphisms $f\in \Hom_R(M_1,M_2)$ will be denoted by
$\Mor\bigl((M_1,\nabla_1), (M_2,\nabla_2)\bigr)$. If the higher
connections are clear from the context we will sometimes omit them.
\end{defn}

\begin{rem}
It is clear that the set of modules with higher connection together
with the sets of 
morphisms defined above forms a category. This category will be denoted
by  $\HCon(R/K)$. Furthermore the full subcategories of
modules with iterative connection
resp. integrable iterative connection will be denoted by
$\ICon(R/K)$ resp. $\IConint(R/K)$ and by definition we 
have a chain of inclusions
$$\HCon(R/K)\supset \ICon(R/K)\supset \IConint(R/K).$$
As the objects of $\HCon(R/K)$ are modules with 
an extra structure and the morphisms are special homomorphisms, we
have a forgetful functor $\forget:\HCon(R/K)\to \Mod(R)$, which is faithful.
\end{rem}

\begin{defn}\label{constructions}
Let  $(M_1,\nabla_1)$ and $(M_2,\nabla_2)$ be $R$-modules with higher
connection. Then we define a higher connection $\nabla_{\!\oplus}$ on
$(M_1\oplus M_2)$ by
$$\nabla_{\!\oplus}: M_1\oplus M_2 \xrightarrow{\nabla_1\oplus \nabla_2}
\Dif \otimes M_1 \oplus \Dif \otimes M_2 \xrightarrow{\isom} \Dif
\otimes (M_1\oplus M_2)$$
and a higher connection $\nabla_{\otimes}$ on $M_1\otimes_R M_2$ by
\begin{eqnarray*}\nabla_{\!\otimes}: M_1\otimes_R M_2
&\!\xrightarrow{\nabla_1\otimes \nabla_2}\!&
(\Dif \otimes_R M_1) \otimes_{d(R)} (\Dif \otimes_R M_2)
 \xrightarrow{\isom}\\
& \xrightarrow{\isom}& (\Dif \otimes_{d(R)} \Dif) \otimes_R
(M_1\otimes_R M_2) \xrightarrow{\mu\otimes \id
} \Dif \otimes_R (M_1\otimes_R M_2).
\end{eqnarray*}

Furthermore we define a higher connection $\nabla_H$ on $\Hom_R(M_1,M_2)$ by
the following:\\
For $f\in \Hom_R(M_1,M_2)$ the composition 
$$1\otimes M_1\hookrightarrow \Dif \otimes_R M_1
\xrightarrow{(\difnabla_1)^{-1}} \Dif \otimes_R M_1 
\xrightarrow{\id_{\Dif}\otimes f}  \Dif \otimes_R
M_2\xrightarrow{\difnabla_2} \Dif \otimes_R M_2$$
is an element of $\Hom_R(M_1,\Dif\otimes_R M_2)$, and we have a
natural isomorphism 
$\Hom_R(M_1,\Dif\otimes_R M_2) \isom \Dif\otimes_R \Hom_R(M_1, M_2)$.
In this sense we define 
$$\nabla_H(f):= \difnabla_2\circ (\id_{\Dif}\otimes f)\circ
\left(\difnabla_1\right)^{-1}|_{1\otimes M}.$$ 
\end{defn}

\begin{rem}
\begin{enumerate}
\item[(i)] If $\nabla_1$ is an iterative connection, the definition of $\nabla_H$
coincides with the definition given in \cite{roesch}, because then we
have $(\difnabla_1)^{-1}|_{1\otimes M_1}=-\difnabla_1|_{1\otimes
  M_1}=-\nabla_1$.
\item[(ii)] As the referee pointed out, in the definition of the
  tensor product it might be possible to replace the twisting
  isomorphism $(\Dif \otimes_R M_1) \otimes_{d(R)} (\Dif \otimes_R M_2)
 \isom (\Dif \otimes_{d(R)} \Dif) \otimes_R (M_1\otimes_R M_2)$ by a
 more general isomorphism in order to obtain a non-commutative
 theory. This might lead to an iterative variant of Y. Andr\'e's
 framework for the theory of differential and difference equations
 (cf. \cite{andre}) and also to a more abstract framework for the
 iterative $q$-difference theory of C. Hardouin (cf. \cite{hardouin}).
\end{enumerate}
\end{rem}

\begin{thm}
The category $\HCon(R/K)$ is an abelian category and the categories
$\ICon(R/K)$ and $\IConint(R/K)$ are abelian subcategories. 
\end{thm}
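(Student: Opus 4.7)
The plan is to exhibit the abelian structure on $\HCon(R/K)$ by working through the standard axioms and transporting as much as possible along the faithful forgetful functor $\forget\colon \HCon(R/K)\to \Mod(R)$. The zero module with its trivial higher connection is a zero object, and finite biproducts are supplied by the construction $\nabla_{\!\oplus}$ of Definition \ref{constructions}. What remains is to build kernels and cokernels in $\HCon(R/K)$ and to verify that every monic (resp.\ epic) is the kernel of its cokernel (resp.\ the cokernel of its kernel).

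First I would construct kernels and cokernels by transport. Given a morphism $f\colon (M_1,\nabla_1)\to (M_2,\nabla_2)$, set $K:=\Ker(f)$ and $C:=\Coker(f)$ in $\Mod(R)$. For the cokernel, the composite $M_2 \xrightarrow{\nabla_2} \Dif\otimes_R M_2 \twoheadrightarrow \Dif\otimes_R C$ vanishes on $\Ima(f)$ (since for $y=f(x)$ one has $\nabla_2(y)=(\id_{\Dif}\otimes f)(\nabla_1(x))\in \Dif\otimes_R \Ima(f)$), which yields a well-defined higher connection $\nabla_C$ on $C$. For the kernel, Corollary \ref{dif_projective} ensures that each $(\Dif)_k$ is projective and hence flat over $R$, so $\Dif\otimes_R K\hookrightarrow \Dif\otimes_R M_1$ is injective; compatibility then forces $\nabla_1(K)\subseteq \Dif\otimes_R K$ and defines $\nabla_K$. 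The universal properties in $\HCon(R/K)$ then follow from those in $\Mod(R)$, since the unique module-theoretic factorisations automatically commute with the connections.

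Next I would verify that every monic in $\HCon(R/K)$ equals the kernel of its cokernel (and dually). The crucial input is Corollary \ref{automatically_projective}: every object of $\HCon(R/K)$ is a finitely generated projective $R$-module, so short exact sequences of underlying modules split, and a morphism in $\HCon(R/K)$ is monic (resp.\ epic) precisely when it is injective (resp.\ surjective) as a map of $R$-modules. For a monic $f\colon A\hookrightarrow B$ the kernel of the canonical projection $B\twoheadrightarrow \Coker(f)$ in $\HCon(R/K)$ is $\Ima(f)\cong A$ with the restricted connection, and this identification is automatically a morphism in $\HCon(R/K)$; the epic case is dual.

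Finally, for the subcategories $\ICon(R/K)$ and $\IConint(R/K)$, I would observe that both the iterativity identity $\difnabla^{(i)}\circ \difnabla^{(j)} = \binom{i+j}{i}\difnabla^{(i+j)}$ and the integrability identity $\nabla_{\psi_1\psi_2} = \nabla_{\psi_1}\nabla_{\psi_2}$ are pointwise identities of $K$-linear maps, hence are inherited by subobjects cut out by $\nabla$-compatible inclusions, by quotients by $\nabla$-compatible surjections, and by biproducts. Consequently both subcategories are closed in $\HCon(R/K)$ under kernels, cokernels and finite direct sums, so they form full abelian subcategories. The main technical subtlety is the step where one restricts $\nabla_1$ to $K$; it is precisely the projectivity statement of Corollary \ref{dif_projective} that guarantees the needed injectivity $\Dif\otimes_R K\hookrightarrow \Dif\otimes_R M_1$, without which the kernel construction would fail.
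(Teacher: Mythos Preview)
Your proof is correct and follows essentially the same strategy as the paper's: transport kernels, cokernels and direct sums from $\Mod(R)$ and observe that the only delicate step is restricting $\nabla_1$ to $\Ker(f)$, which works because each $(\Dif_{R/K})_k$ is projective (hence $\Dif\otimes_R -$ is exact). One small imprecision: you phrase the key consequence of flatness as ``$\Dif\otimes_R K\hookrightarrow \Dif\otimes_R M_1$ is injective'', but what you actually need (and what flatness also gives) is the identification $\Ker(\id_{\Dif}\otimes f)=\Dif\otimes_R K$; this is precisely the formulation the paper uses, and it is what makes ``compatibility then forces $\nabla_1(K)\subseteq \Dif\otimes_R K$'' go through.
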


\begin{proof}
For all $(M_1,\nabla_1), (M_2,\nabla_2)\in \HCon(R/K)$ the set of morphisms
$\Mor(M_1,M_2)$ is a subgroup of $\Hom_R(M_1,M_2)$ and so it is an
abelian group. Since $\Mod(R)$ is an abelian category, it is
sufficient to show that kernels, direct sums and so on in the
category $\Mod(R)$ can be equipped with a higher connection
resp. (integrable) iterative connection and that all necessary
homomorphisms (like the inclusion map of the kernel into the module)
are morphisms.\\
The only point at which one has to be careful is the kernel: For a
morphism $f\in\Mor(M_1, M_2)$, the higher
connection $\nabla_1$ induces a map $\nabla_1|_{\Ker(f)}:\Ker(f)\to
\Ker(\id_{\Dif} \otimes f)$ and one has to verify that
$\Ker(\id_{\Dif} \otimes f)=\Dif\otimes \Ker(f)$. But this follows
from the fact that $\Dif_k$ is a projective $R$-module for all
$k\in\NN$ and hence $\Dif\otimes -$ is an exact functor.
(See \cite{roesch}, Ch.4.1, for details.) 
\end{proof}

Now we will show that these categories are tensor categories over
$K$ (we will use the definition of a tensor category over $K$ given in 
\cite{deligne2}). 
By the previous theorem,
they are all abelian, and by Corollary \ref{automatically_projective},
all modules that arise are projective and the category $\Projmod(R)$
of finitely generated projective $R$-modules is known to satisfy all
properties of a tensor category apart from being an abelian category.

So we define 
\begin{itemize}
\item the tensor product of $(M_1,\nabla_1)$ and $(M_2,\nabla_2)$ by
$$(M_1,\nabla_1)\otimes (M_2,\nabla_2):=(M_1\otimes_R M_2,\nabla_{\!\otimes})$$
(this tensor product is obviously associative and commutative),
\item the unital object $\uob:=(R,\d_R)$ \ ($R\otimes_R M\to M,r\otimes
m\mapsto rm$ is easily seen to be a morphism for all $(M,\nabla)\in
\HCon(R/K)$),
\item the dual object to $(M,\nabla)$ by
$$(M,\nabla)^{\dual}:=(M^\dual, \nabla^\dual),$$
where $\nabla^\dual(f):=\d_\Dif \circ (\id_\Dif \otimes f)\circ
(\difnabla^{-1})|_{1\otimes M}\in \Hom_R(M,\Dif)$\,
for $f\in M^\dual=\Hom_R(M,R)$\\ 
(here we used that $\Dif \otimes_R
R\isom \Dif$ and $\Hom_R(M,\Dif)\isom \Hom_R(M,R)\otimes \Dif$.
Cf. the definition of $\nabla_H$ in Definition \ref{constructions}), 
\item the internal hom object of $(M_1,\nabla_1)$ and $(M_2,\nabla_2)$
by
$$\iHom\left((M_1,\nabla_1),(M_2,\nabla_2)\right)
:=\left(\Hom_R(M_1,M_2),\nabla_H\right).$$
\end{itemize}
Furthermore we recognize that every endomorphism in $\End(\uob)$ is
given by the image of $1\in R$, which has to be constant. Since all
constants are algebraic over $K$ and $K$ is 
algebraically closed in $R$, $\End(\uob)$ is isomorphic to $K$.

\begin{lem}
For all $(M_1,\nabla_1),(M_2,\nabla_2)\in \HCon(R/K)$ the isomorphism
of $R$-modules 
$$\iota_{M_1,M_2}:M_1^\dual \otimes_R M_2\to \Hom_R(M_1,M_2),\quad
f\otimes m\mapsto \{ v\mapsto f(v)\cdot m\}$$
is a morphism (and therefore an isomorphism) in $\HCon(R/K)$.
\end{lem}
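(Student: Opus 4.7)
First, note that $\iota_{M_1,M_2}$ is already an isomorphism of $R$-modules: by Corollary \ref{automatically_projective}, $M_1$ is finitely generated projective, and the canonical map $M_1^\dual \otimes_R M_2 \to \Hom_R(M_1,M_2)$ is then a standard isomorphism. The content of the lemma is therefore to show that $\iota_{M_1,M_2}$ is a morphism, i.e.\ that $(\id_\Dif \otimes \iota_{M_1,M_2}) \circ \nabla_{\otimes} = \nabla_H \circ \iota_{M_1,M_2}$, and by $R$-linearity it suffices to check this on elementary tensors $f \otimes m$ with $f \in M_1^\dual$, $m \in M_2$.

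The plan is to compute both sides and compare them as elements of $\Hom_R(M_1, \Dif \otimes_R M_2)$ via the natural identification $\Dif \otimes_R \Hom_R(M_1,M_2) \cong \Hom_R(M_1, \Dif \otimes_R M_2)$, which is valid because $M_1$ is finitely generated projective. For the left-hand side, let $\nabla_1^\dual(f) = \sum_i \omega_i \otimes f_i$ and $\nabla_2(m) = \sum_j \eta_j \otimes m_j$. Unwinding the three steps in the construction of $\nabla_{\otimes}$ (the map $\nabla_1^\dual \otimes \nabla_2$, the twist, and $\mu \otimes \id$) and then applying $\id_\Dif \otimes \iota_{M_1,M_2}$, one finds that the value at $v \in M_1$ equals $\sum_{i,j} \omega_i f_i(v) \eta_j \otimes m_j = \nabla_1^\dual(f)(v) \cdot \nabla_2(m)$, where the dot denotes the left $\Dif$-action on $\Dif \otimes_R M_2$.

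For the right-hand side, write $\difnabla_1^{-1}(1 \otimes v) = \sum_k \xi_k \otimes w_k$. Since $\iota_{M_1,M_2}(f \otimes m)(w) = f(w)\,m$, the composition $\difnabla_2 \circ (\id_\Dif \otimes \iota_{M_1,M_2}(f\otimes m))$ applied to $\sum_k \xi_k \otimes w_k$ is $\difnabla_2\bigl(\sum_k f(w_k)\,\xi_k \otimes m\bigr)$. The defining identity $\difnabla_2(\omega \otimes x) = \d_\Dif(\omega) \cdot \nabla_2(x)$, combined with the facts that $\d_\Dif$ is an algebra homomorphism extending $\d_R$ (Theorem \ref{d_Dif}) and that $\Dif$ is commutative, rewrites this as $\d_\Dif\bigl(\sum_k \xi_k f(w_k)\bigr) \cdot \nabla_2(m)$. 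The factor on the left is precisely $\nabla_1^\dual(f)(v)$, by the very definition $\nabla_1^\dual(f) = \d_\Dif \circ (\id_\Dif \otimes f) \circ \difnabla_1^{-1}|_{1 \otimes M_1}$, so both sides agree.

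The main obstacle is organizational rather than conceptual: one must carefully track the twist isomorphism built into $\nabla_{\otimes}$ and the identification $\Dif \otimes_R \Hom_R(M_1,M_2) \cong \Hom_R(M_1, \Dif \otimes_R M_2)$, both of which rely on finite generation and projectivity of $M_1$. Once these identifications are in place, the verification reduces to three structural inputs already present in the paper, namely that $\d_\Dif$ is a $K$-algebra homomorphism of $\Dif$ extending $\d_R$, that $\difnabla$ satisfies $\difnabla(\omega \otimes x) = \d_\Dif(\omega)\cdot \nabla(x)$, and that $\Dif$ is commutative.
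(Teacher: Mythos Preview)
Your proof is correct and follows essentially the same approach as the paper: both compute $\nabla_H(\iota_{M_1,M_2}(f\otimes m))(v)$ and $(\id_\Dif \otimes \iota_{M_1,M_2})(\nabla_{\!\otimes}(f\otimes m))(v)$ directly from the definitions and verify that each equals $\nabla_1^\dual(f)(v)\cdot \nabla_2(m)$. The only difference is cosmetic---you expand into explicit sums $\sum_i \omega_i\otimes f_i$, $\sum_j \eta_j\otimes m_j$, $\sum_k \xi_k\otimes w_k$, whereas the paper keeps the expressions in composed form---but the underlying computation is identical.
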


\begin{proof}
For all $f\otimes m\in M_1^\dual \otimes_R M_2$ and for all $v\in M_1$,
we have
\begin{eqnarray*}
\nabla_H(\iota_{M_1,M_2}(f\otimes m))(v)&=&
\Bigl(\difnabla_2\circ \bigl( \id_\Dif \otimes \iota_{M_1,M_2}(f\otimes
m)\bigr) \circ (\difnabla_1^{-1})\Bigr) (1\otimes v)\\
&=& \difnabla_2\bigl( (\id_\Dif \otimes f)(\difnabla_1^{-1}(1\otimes v))\cdot (1 \otimes
m)\bigr)\\
&=&  \d_\Dif\bigl( (\id_\Dif \otimes
f)(\difnabla_1^{-1}(1\otimes v)\bigr) \cdot \nabla_2(m)
\end{eqnarray*}
and
\begin{eqnarray*}
(\id_\Dif \otimes \iota_{M_1,M_2})(\nabla_{\!\otimes}(f\otimes m))(v) 
&=&
(\id_\Dif \otimes \iota_{M_1,M_2})\Bigl(\bigl(\d_\Dif \circ
(\id_\Dif\otimes f)\circ (\difnabla_1^{-1}|_{1\otimes M})\bigr)\otimes \nabla_2(m)\Bigr)(v)\\
&=& \bigl(\d_\Dif \circ (\id_\Dif\otimes f)\circ
(\difnabla_1^{-1})\bigr)(1\otimes v)\cdot \nabla_2(m).
\end{eqnarray*}
So $\nabla_H\circ \iota_{M_1,M_2}=(\id_\Dif \otimes
\iota_{M_1,M_2})\circ \nabla_{\!\otimes}$, \ie{} $\iota_{M_1,M_2}$ is
a morphism. 
\end{proof}

\begin{lem}
Let $(M,\nabla)\in \HCon(R/K)$, and let $\eps_M:M\otimes M^\dual\to R$
and $\delta_M:R\to M^\dual\otimes M$ be the evaluation and
coevaluation homomorphisms given in the 
definition of a tensor category, \ie{}, $\eps_M(m\otimes f)=f(m)$ and
$\delta_M(1)=\iota_{M,M}^{-1}(\id_M)$. Then $\eps_M$ and $\delta_M$
are morphisms in $\HCon(R/K)$.
\end{lem}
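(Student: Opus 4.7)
The plan is to verify the two compatibilities separately. For $\delta_M$ I would exploit the previous lemma together with the fact that $\id_M\in\End(M)$ is horizontal, while for $\eps_M$ a short direct calculation is needed, whose key ingredient is the semilinearity of $\difnabla$ with respect to $\d_\Dif$.

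For $\delta_M$, the key observation is that $\id_M$ is horizontal with respect to $\nabla_H$. Indeed, from Definition \ref{constructions} one computes $\nabla_H(\id_M)=\difnabla\circ\difnabla^{-1}|_{1\otimes M}=\id_{\Dif\otimes M}|_{1\otimes M}$, which is simply the inclusion $m\mapsto 1\otimes m$; under the identification $\Hom_R(M,\Dif\otimes M)\isom\Dif\otimes_R\End_R(M)$ (valid since $M$ is projective of finite rank by Corollary \ref{automatically_projective}) this corresponds to $1\otimes\id_M$. Since the previous lemma shows $\iota_{M,M}$ is an isomorphism in $\HCon(R/K)$, the element $\delta_M(1)=\iota_{M,M}^{-1}(\id_M)$ is horizontal in $(M^\dual\otimes M,\nabla_\otimes)$, i.e., $\nabla_\otimes(\delta_M(1))=1\otimes\delta_M(1)$. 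Combining this with the $R$-linearity of $\delta_M$ and the Leibniz rule $\nabla_\otimes(r\cdot\xi)=\d_R(r)\cdot\nabla_\otimes(\xi)$ yields
\[
\nabla_\otimes(\delta_M(r))=\d_R(r)\otimes\delta_M(1)=(\id_\Dif\otimes\delta_M)(\d_R(r))
\]
for every $r\in R$, so $\delta_M$ is a morphism.

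For $\eps_M$ I proceed directly. Fix $m\in M,\ f\in M^\dual$, write $\nabla(m)=\sum_i\omega_i\otimes m_i$ (so that $\difnabla(1\otimes m)=\nabla(m)$), and set $H:=\id_\Dif\otimes f:\Dif\otimes M\to\Dif$, which is $\Dif$-linear on the left. By the definition of $\nabla^\dual$, $\nabla^\dual(f)(v)=\d_\Dif\bigl(H(\difnabla^{-1}(1\otimes v))\bigr)$ for $v\in M$. Unwinding $\nabla_\otimes$ as in Definition \ref{constructions} and applying $\id_\Dif\otimes\eps_M$ gives
\[
(\id_\Dif\otimes\eps_M)(\nabla_\otimes(m\otimes f))=\sum_i\omega_i\cdot\nabla^\dual(f)(m_i)=\sum_i\omega_i\cdot\d_\Dif H\difnabla^{-1}(1\otimes m_i).
\]
The crucial observation is that $\difnabla$ is $\d_\Dif$-semilinear on the left, directly from the definition $\difnabla(\omega y)=\d_\Dif(\omega)\difnabla(y)$, whence $\difnabla^{-1}$ is $\d_\Dif^{-1}$-semilinear. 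Combining this with the $\Dif$-linearity of $H$ and the ring-homomorphism property of $\d_\Dif$, the above sum collapses to
\[
\d_\Dif H\difnabla^{-1}\Bigl(\sum_i\omega_i\otimes m_i\Bigr)=\d_\Dif H\bigl(\difnabla^{-1}\difnabla(1\otimes m)\bigr)=\d_\Dif(f(m))=\d_R(\eps_M(m\otimes f)),
\]
which proves that $\eps_M$ is a morphism.

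The main obstacle is the $\eps_M$ case: because $\d_\Dif$ is not $\Dif$-linear, there is no direct way to factor the coefficients $\omega_i$ out of $\sum_i\omega_i\cdot\d_\Dif H\difnabla^{-1}(1\otimes m_i)$. The semilinearity of $\difnabla^{-1}$ is precisely what lets one pull each $\omega_i$ inside $\difnabla^{-1}$ as $\d_\Dif^{-1}(\omega_i)$, apply $H$ and then $\d_\Dif$, and recover the original coefficient via $\d_\Dif\circ\d_\Dif^{-1}=\id$. Once this is noticed, the telescoping with $\difnabla\circ\difnabla^{-1}=\id$ is immediate.
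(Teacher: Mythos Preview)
Your proposal is correct and follows essentially the same route as the paper. For $\delta_M$ both arguments reduce to checking $\nabla_H(\id_M)=1\otimes\id_M$ and invoke the previous lemma on $\iota_{M,M}$; for $\eps_M$ the paper carries out the same computation in compositional notation (inserting $\d_\Dif^{-1}\otimes\difnabla^{-1}$ and then $\d_\Dif\otimes\d_\Dif$), which is exactly your semilinearity trick $\difnabla^{-1}(\omega\cdot z)=\d_\Dif^{-1}(\omega)\cdot\difnabla^{-1}(z)$ written without naming it.
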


\begin{proof}
For $m\otimes f\in M\otimes M^\dual$, we have
\begin{eqnarray*}
&& (\id_\Dif\otimes \eps_M)(\nabla_{\!\otimes}(m\otimes f))\\
&&\quad =
(\id_\Dif\otimes \eps_M)\Bigl((\mu\otimes \id)\circ \bigl(\nabla(m)\otimes
(\d_\Dif\circ (\id_\Dif \otimes f)\circ \difnabla^{-1}|_{1\otimes
  M}\bigr)\Bigr)\\ 
&&\quad = (\mu\otimes \id)\Bigl( \bigl(\id_\Dif \otimes
\bigl(\d_\Dif \circ (\id_\Dif \otimes f)\circ \difnabla^{-1}|_{1\otimes
  M}\bigr)\bigr)
(\nabla(m))\Bigr)\\
&&\quad = \Bigl((\mu\otimes \id)\circ (\d_\Dif \otimes \d_\Dif)\circ
(\id_{\Dif\otimes \Dif} \otimes f)\circ (\d_\Dif^{-1} \otimes \difnabla^{-1}|_{1\otimes
  M}) \circ
\nabla\Bigr) (m)\\
&&\quad =\Bigl( \d_\Dif \circ (\id_\Dif \otimes f)\circ
(\difnabla^{-1}) \circ  \nabla\Bigr) (m)\\
&&\quad = \Bigl( \d_\Dif \circ (\id_\Dif \otimes f)\Bigr) (1\otimes
m)\\
&&\quad = \d_R(f(m)) =\d_R(\eps_M(m\otimes f))
\end{eqnarray*}
\noindent So $\eps_M$ is a morphism.

Since $\iota_{M,M}$ is an isomorphism in $\HCon(R/K)$, $\delta_M$ is a
morphism if and only if $\iota_{M,M}\circ\delta_M$ is a morphism.
Now
$$\nabla_H((\iota_{M,M}\circ\delta_M)(1))=\nabla_H(\id_M)=
\difnabla \circ (\id_\Dif \otimes \id_M)\circ
\difnabla^{-1}|_{1\otimes M}=
\difnabla \circ \difnabla^{-1}|_{1\otimes M}= 1\otimes \id_M$$
and
$$(\id_\Dif \otimes (\iota_{M,M}\circ\delta_M))(\d_R(1))= (\id_\Dif
\otimes (\iota_{M,M}\circ\delta_M))(1\otimes 1)= 1\otimes \id_M,$$
so $\delta_M$ is a morphism.
\end{proof}

\begin{thm}\label{all_tensor_cat}
$\HCon(R/K)$, $\ICon(R/K)$ and $\IConint(R/K)$ are tensor
categories over $K$.
\end{thm}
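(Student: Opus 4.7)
The plan is to assemble the structures and verification lemmas already built up into Deligne's tensor category axioms, and then separately check that $\ICon(R/K)$ and $\IConint(R/K)$ are stable under the four tensor operations.

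For $\HCon(R/K)$ I would first invoke the preceding theorem for the abelian structure, and note that every object is already projective over $R$ by Corollary~\ref{automatically_projective}, so the underlying modules live in $\Projmod(R)$, which satisfies every tensor category axiom apart from being abelian. Definition~\ref{constructions} supplies $\otimes$, $\iHom$ and (via $\nabla^\dual$) the dual, while the unital object $\uob=(R,\d_R)$ was singled out above. The two lemmas immediately preceding the statement show that $\iota_{M_1,M_2}$, $\eps_M$, and $\delta_M$ are morphisms in $\HCon(R/K)$, which secures rigidity. The identification $\End(\uob)\isom K$ was already recorded, using that $K$ is algebraically closed in $R$. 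The associativity, commutativity and unit constraints descend from those of $\otimes_R$ on $\Mod(R)$; in each case one has to check that the canonical $R$-linear isomorphism respects the higher connections, which reduces to the associativity and commutativity of the multiplication $\mu_\Dif$ underlying the definition of $\nabla_{\!\otimes}$.

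Next I would verify that $\ICon(R/K)$ and $\IConint(R/K)$ are closed under $\oplus$, $\otimes$, $\iHom$ and $(-)^\dual$. For direct sums everything decomposes on the summands and there is nothing to do. For iterativity of $\nabla_{\!\otimes}$ I would invoke the characterisation $a.\difnabla \circ b.\nabla = (a+b).\nabla$ for $a,b\in K^\sep$ proved at the end of Section~\ref{iterative_theory}; unwinding the definition of $\nabla_{\!\otimes}$ and using that $a.\d_\Dif$ is a $K$-algebra homomorphism (hence commutes with $\mu_\Dif$) together with property~(ii) of Theorem~\ref{d_Dif}, the identity for $\nabla_{\!\otimes}$ reduces to the corresponding identities already known for $\nabla_1$ and $\nabla_2$ on each tensor factor. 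The dual and internal-Hom cases then follow through $\iota_{M_1,M_2}$, using also that on an iterative connection one has $\difnabla^{-1} = (-1).\difnabla$, as noted in the remark after Definition~\ref{constructions}.

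For integrability of the same constructions I would check $(\nabla_\bullet)_{\psi_1\psi_2} = (\nabla_\bullet)_{\psi_1}\cdot(\nabla_\bullet)_{\psi_2}$ for every $\psi_1,\psi_2\in\HD_K(R)$ and $\bullet\in\{\oplus,\otimes,H,\dual\}$. Direct sums are immediate, the tensor case reduces factor by factor using $(\psi_1\psi_2)[[T]] = \psi_1[[T]]\circ \psi_2[[T]]$ together with the multiplicativity of $\mu_\Dif$, and the internal-Hom and dual cases then follow via $\iota$. I expect the main obstacle to be the diagram chase establishing iterativity and integrability of $\nabla_{\!\otimes}$, since it requires juggling the twist, $\mu_\Dif$, and the $K$-action on $\d_\Dif$ simultaneously; once that is carried out cleanly, every remaining verification is either analogous or purely formal, and the theorem follows by combining these closure properties with the tensor category structure already established on $\HCon(R/K)$.
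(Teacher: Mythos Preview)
Your proposal is correct and follows essentially the same route as the paper: assemble the abelian and rigidity data already established for $\HCon(R/K)$, then verify that $\ICon(R/K)$ and $\IConint(R/K)$ are stable under $\otimes$ and duals, the iterativity check being done via the characterisation $a.\difnabla\circ b.\nabla=(a+b).\nabla$ and the integrability check being reduced to the factors. The only organisational difference is that the paper isolates the key integrability identities as a separate lemma (Lemma~\ref{tensor_product_and_dual}), namely $(\nabla_{\!\otimes})_\psi=(\mu\otimes\id)\circ\bigl((\nabla_1)_\psi\otimes(\nabla_2)_\psi\bigr)$ and an analogous closed formula for $(\nabla_H)_\psi$, from which $\nabla_{\psi_1\psi_2}=\nabla_{\psi_1}\cdot\nabla_{\psi_2}$ is immediate; you instead describe this reduction inline. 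One small caution on your phrasing: deducing iterativity of $\nabla^\dual$ ``through $\iota_{M_1,M_2}$'' from the tensor case alone would be circular, since $\iota$ presupposes the dual; you should (as the paper does) check $\nabla^\dual$ directly from its formula using $\difnabla^{-1}=(-1).\difnabla$, and only then obtain $\nabla_H$ via $\iota$.
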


\begin{proof}
Since we have already shown that these categories are abelian, that
$\HCon(R/K)$ is 
equipped with an associative and commutative tensor product, and that
$\eps_M$ and $\delta_M$ are morphisms, we already know that
$\HCon(R/K)$ is a tensor category. Hence, it only remains to show that
the two subcategories are closed under tensor products and duals.

It is checked immediately that for $(M_1,\nabla_1),(M_2,\nabla_2)\in
\ICon(R/K)$ the higher connection $\nabla_{\!\otimes}$ on $M_1\otimes
M_2$ satisfies
$(a.\difnabla_{\!\otimes} )\circ
(b.\nabla_{\!\otimes} )=(a+b).\nabla_{\!\otimes}$ for all $a,b\in
K^{\sep}$. One also checks easily that the higher connection
$\nabla_{\! 1}^{\dual}$ on $M^\dual$ satisfies
$\left(a.\difnabla_{\! 1}^\dual \right) \circ
\left(b.\nabla_{\! 1}^\dual\right)= (a+b).\difnabla_{\! 1}^\dual$ for all
$a,b\in K^{\sep}$, if $\nabla_{\! 1}$ is iterative. Hence $\ICon(R/K)$ is a
tensor category.

Assuming that $(M_1,\nabla_{\! 1}),(M_2,\nabla_2)\in \IConint(R/K)$,
the integrability conditions for $\nabla_{\!\otimes}$ and
$\nabla_{\! 1}^{\dual}$ are obtained by a short calculation using the
following lemma.
\end{proof}

\begin{lem}\label{tensor_product_and_dual}
Let $(M_1,\nabla_1),(M_2,\nabla_2)\in \HCon(R/K)$ and let $\psi\in
\HD_K(R)$. Then we have:
\begin{enumerate}
\item $(\nabla_{\!\otimes})_\psi= (\mu\otimes \id)\circ \bigl((\nabla_{\! 1})_\psi \otimes
(\nabla_2)_\psi\bigr)$,
\item For all $f\in \Hom_R(M_1,M_2)$:
$$\bigl((\nabla_H)_\psi\bigr)(f)\ =\ \left(\nabla_2\right)_\psi [[T]]\circ
(\id_{R[[T]]}\otimes 
f)\circ \left(\left(\nabla_{\! 1}\right)_\psi [[T]]\right)^{-1}|_{1\otimes
  M}.$$
\end{enumerate}
\end{lem}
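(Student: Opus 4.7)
Both parts reduce to the fact that $\tilde{\psi}\colon\Dif\to R[[T]]$ is a homomorphism of $R$-algebras, and hence commutes with multiplication: $\tilde{\psi}\circ\mu_{\Dif}=\mu_{R[[T]]}\circ(\tilde{\psi}\otimes\tilde{\psi})$.

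For part (i), I will expand $\nabla_{\!\otimes}$ directly on a pure tensor. Writing $\nabla_1(m_1)=\sum_i\omega_i\otimes x_i$ and $\nabla_2(m_2)=\sum_j\eta_j\otimes y_j$, the definition of $\nabla_{\!\otimes}$ gives $\nabla_{\!\otimes}(m_1\otimes m_2)=\sum_{i,j}\omega_i\eta_j\otimes(x_i\otimes y_j)$ in $\Dif\otimes(M_1\otimes M_2)$. Applying $\tilde{\psi}\otimes\id_{M_1\otimes M_2}$ and invoking $\tilde{\psi}(\omega_i\eta_j)=\tilde{\psi}(\omega_i)\tilde{\psi}(\eta_j)$ yields exactly $(\mu_{R[[T]]}\otimes\id)\bigl((\nabla_1)_\psi(m_1)\otimes(\nabla_2)_\psi(m_2)\bigr)$ after the canonical rearrangement of factors.

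For part (ii), I will reduce to part (i) via the isomorphism $\iota_{M_1,M_2}\colon M_1^{\dual}\otimes_R M_2\to\Hom_R(M_1,M_2)$, which by the preceding lemma is a morphism of modules with higher connection. Applying $\tilde{\psi}\otimes\id$ to the intertwining relation for $\iota_{M_1,M_2}$ shows that $\iota_{M_1,M_2}$ is also a morphism for the induced $\psi$-derivations. Consequently $(\nabla_H)_\psi$ on $\Hom_R(M_1,M_2)$ corresponds via $\iota_{M_1,M_2}$ to $(\nabla_{\!\otimes})_\psi$ on $M_1^{\dual}\otimes M_2$, and part (i) identifies the latter with $(\mu\otimes\id)\circ\bigl((\nabla_1^{\dual})_\psi\otimes(\nabla_2)_\psi\bigr)$. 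Evaluating on $f=\iota_{M_1,M_2}(\alpha\otimes m_2)$ at $v\in M_1$ and unpacking both sides of the claimed identity, the tensor factor $(\nabla_2)_\psi(m_2)$ cancels and the lemma reduces to the dual case
\[
(\nabla_1^{\dual})_\psi(\alpha)(v)\;=\;\psi[[T]]\Bigl((\id\otimes\alpha)\bigl(((\nabla_1)_\psi[[T]])^{-1}(1\otimes v)\bigr)\Bigr).
\]

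The hard part will be establishing this dual case identity. A naive pointwise comparison fails because $\tilde{\psi}\circ\d_\Dif$ and $\psi[[T]]\circ\tilde{\psi}$ do not agree as maps $\Dif\to R[[T]]$ for non-iterative $\psi$; indeed their equality is exactly the iterativity condition of Lemma~\ref{it_eig}. I will therefore argue by uniqueness: both sides define a $\psi$-derivation on $M_1^{\dual}$, and both satisfy the defining duality property, namely that the evaluation pairing $M_1^{\dual}\otimes M_1\to R$ is a morphism of $\psi$-derivations from the tensor-product $\psi$-derivation provided by part (i) to $\psi=(\d_R)_\psi$. Since the dual $\psi$-derivation of $(\nabla_1)_\psi$ is uniquely characterised by this property (using that $M_1$ is finitely generated projective by Corollary~\ref{automatically_projective}), the two expressions must agree.
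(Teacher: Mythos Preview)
Your part (i) is correct and is exactly the paper's argument.

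For part (ii) the paper takes a much more direct route. It observes that the map
\[
\difnabla_2\circ(\id_{\Dif}\otimes f)\circ\difnabla_1^{-1}\colon \Dif\otimes_R M_1\longrightarrow \Dif\otimes_R M_2
\]
is $\Dif$-linear (the $\d_\Dif$-semilinearities of $\difnabla_1^{-1}$ and $\difnabla_2$ cancel), and likewise the bottom-row composite $(\nabla_2)_\psi[[T]]\circ(\id\otimes f)\circ((\nabla_1)_\psi[[T]])^{-1}$ is $R[[T]]$-linear. Hence the two routes from $\Dif\otimes M_1$ to $R[[T]]\otimes M_2$ (top row then $\tilde\psi\otimes\id$, versus $\tilde\psi\otimes\id$ then bottom row) are both $\tilde\psi$-semilinear over $\Dif$. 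They agree tautologically on the $\Dif$-generating set $\{\nabla_1(m):m\in M_1\}$ of $\Dif\otimes M_1$ (each sends $\nabla_1(m)$ to $(\nabla_2)_\psi(f(m))$), so they agree everywhere; restricting to $1\otimes M_1$ gives the claim.

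Your reduction via $\iota_{M_1,M_2}$ to the dual case is correct, as is your uniqueness claim for a $\psi$-derivation on $M_1^{\dual}$ compatible with evaluation. The gap is the assertion that your right-hand side $\Psi(\alpha)(v)=\psi[[T]]\bigl((\id\otimes\alpha)\circ((\nabla_1)_\psi[[T]])^{-1}(1\otimes v)\bigr)$ itself satisfies this evaluation property. This is not automatic: the pairing $\langle\Psi(\alpha),(\nabla_1)_\psi(v)\rangle$ is computed by the $R[[T]]$-linear extension of $\Psi(\alpha)\colon M_1\to R[[T]]$, whereas your formula is the restriction to $M_1$ of the composite $G_\alpha:=\psi[[T]]\circ(\id\otimes\alpha)\circ((\nabla_1)_\psi[[T]])^{-1}$ on $M_1[[T]]$, whose individual factors are only $\psi[[T]]$-semilinear. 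To close the argument you must show that $G_\alpha$ is in fact $R[[T]]$-linear (so that it coincides with that extension and $G_\alpha((\nabla_1)_\psi(v))=\psi(\alpha(v))$ drops out). But this holds precisely because the $\psi[[T]]$-semilinearities of $((\nabla_1)_\psi[[T]])^{-1}$ and of $\psi[[T]]$ cancel --- which is exactly the semilinearity-cancellation insight the paper exploits directly. So your detour through duals and uniqueness is salvageable, but it relocates rather than removes the one nontrivial step; the paper's generator argument reaches the same point with less scaffolding.
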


\begin{proof}
We have,
\begin{eqnarray*}
(\nabla_{\!\otimes})_\psi &=& (\tilde{\psi}\otimes \id)\circ (\mu \otimes
\id)\circ (\nabla_1\otimes \nabla_2) \\
&=& (\mu\otimes \id)\circ \bigl(  (\tilde{\psi}\otimes \id_{M_1})\otimes
(\tilde{\psi}\otimes \id_{M_2})\bigr) \circ  (\nabla_1\otimes \nabla_2) \\
&=& (\mu\otimes \id)\circ \bigl((\nabla_1)_\psi \otimes
(\nabla_2)_\psi\bigr),
\end{eqnarray*}
which shows the first part. For the proof of the second part, consider the
following diagram: 
\begin{center}$\xymatrix@C+10pt@R-2pt{
\Dif\otimes_R M_1 \ar[dd]_{\tilde{\psi}\otimes \id_{M_1}} & 
\Dif\otimes_R M_1 \ar[l]_{\difnabla_1}^{\isom} \ar[r]^{\id\otimes f} &
\Dif\otimes_R M_2 \ar[r]^{\difnabla_2}_{\isom}
 &\Dif\otimes_R M_2 \ar[dd]_{\tilde{\psi}\otimes \id_{M_2}}\\ 
& 1\otimes M_1 \ar[u] \ar[d] & & \\
 R[[T]]\otimes_R M_1 & R[[T]]\otimes_R M_1
 \ar[l]_{\left(\nabla_1\right)_\psi [[T]]}^{\isom} \ar[r]^{\id\otimes
   f} & R[[T]]\otimes_R M_2 \ar[r]^{\left(\nabla_2\right)_\psi
   [[T]]}_{\isom} & R[[T]]\otimes_R M_2 
}$\end{center}
It is sufficient to show that both maps from the upper left corner of
the diagram to the lower right corner are equal.
Both parts of the diagram (starting at $1\otimes M$ in the middle)
commute by definition of 
$\left(\nabla_1\right)_\psi$ resp. $\left(\nabla_2\right)_\psi$.
Furthermore, $\Dif\otimes M_1$ is generated as an $\Dif$-module by
elements in $1\otimes M_1$ and since $\difnabla_1$ is an isomorphism,
$\Dif\otimes M_1$ is also generated as an $\Dif$-module by elements of
the form $\difnabla_1(1\otimes m)$ for $m\in M_1$.
The equality of the maps then follows from the $\Dif$-linearity of the
upper row and the $R[[T]]$-linearity of the lower row.
\end{proof}

\begin{thm}\label{tannakian_categories}
The categories $\HCon(R/K)$, $\ICon(R/K)$ and $\IConint(R/K)$
are Tannakian categories with the forgetful functor $\forget:\HCon(R/K)\to
\Mod(R)$ (restricted to the respective category) as fibre functor.
If moreover $R$ has a $K$-rational point, \ie{}, there exists a maximal
ideal $\m\ideal R$ with $K\isom R/\m$, then these categories are
neutral Tannakian categories with fibre functor
$\forget_K:\HCon(R/K)\xrightarrow{\forget} \Mod(R)
\xrightarrow{\otimes_R R/\m} \Vect(K)$.
\end{thm}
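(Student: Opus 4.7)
The plan is to invoke the tensor-category structure already established in Theorem \ref{all_tensor_cat}, together with the observation preceding it that $\End(\uob) \cong K$, so that it only remains to verify that the proposed functors are fibre functors, i.e., exact faithful $K$-linear tensor functors. The essential input throughout is Corollary \ref{automatically_projective}, which guarantees that every underlying module is projective.

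For the (non-neutral) Tannakian statement, I would argue that the forgetful functor $\forget:\HCon(R/K) \to \Mod(R)$ (which by Cor. \ref{automatically_projective} factors through finitely generated projective $R$-modules, i.e., vector bundles on $\Spec R$) is a fibre functor over $\Spec R$. It is manifestly $K$-linear, and it is faithful because a morphism in $\HCon(R/K)$ is zero precisely when its underlying $R$-linear map vanishes. Exactness is automatic: the abelian structure established earlier places kernels, cokernels, and direct sums on the level of underlying $R$-modules with the induced higher connections. Tensor-preservation and unit-preservation are immediate from Definition \ref{constructions}, since $\forget(\uob)=R$ and $\forget(M_1\otimes M_2) = M_1\otimes_R M_2$. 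The same argument, restricted to $\ICon(R/K)$ or $\IConint(R/K)$, works verbatim.

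For the neutral case under the $K$-rational point hypothesis, the composite $\forget_K = (-\otimes_R R/\m)\circ \forget$ is visibly $K$-linear, preserves the unit ($\forget_K(\uob) = R\otimes_R R/\m = K$), and preserves tensor products via the natural isomorphism $(M_1\otimes_R M_2)\otimes_R K \cong (M_1\otimes_R K)\otimes_K(M_2\otimes_R K)$. What I expect to be the main obstacle is exactness and faithfulness, because $-\otimes_R R/\m$ is not exact on $\Mod(R)$ in general. This is where Cor. \ref{automatically_projective} becomes indispensable: in the abelian category $\HCon(R/K)$, every kernel, image, and cokernel of a morphism is again an object, hence has projective underlying module. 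Consequently, any short exact sequence in $\HCon(R/K)$ consists of projective modules, so the third term is flat and the sequence survives $-\otimes_R R/\m$.

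For faithfulness, given a nonzero morphism $f:(M_1,\nabla_1)\to(M_2,\nabla_2)$, its image $I=\Ima(f)$ is a nonzero sub-object in $\HCon(R/K)$, hence a nonzero finitely generated projective module over the integral domain $R$; this forces $I\otimes_R K\neq 0$, since such a module has constant positive rank on the connected scheme $\Spec R$. Moreover, the cokernel $M_2/I$ is again an object and hence projective, so flat, whence the inclusion $I\hookrightarrow M_2$ remains injective after $-\otimes_R R/\m$. Composing with the surjection $M_1\otimes_R K \twoheadrightarrow I\otimes_R K$ coming from right-exactness of tensor product, one sees that $\forget_K(f)$ is nonzero, completing the verification.
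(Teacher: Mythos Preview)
Your argument is correct. The paper's own proof is much terser: for the non-neutral statement it simply asserts that $\forget$ is a fibre functor ``by construction'' (i.e., the verifications you carry out are regarded as immediate from the tensor-category machinery of Theorem \ref{all_tensor_cat}), and for the neutral statement it appeals directly to \cite{deligne2}, 2.8, which supplies the passage from a fibre functor into projective $R$-modules to a $K$-valued fibre functor upon evaluation at a $K$-rational point.

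Your route is thus genuinely different in the neutral case: rather than invoking Deligne's general base-change result for fibre functors, you verify exactness and faithfulness of $\forget_K$ by hand, using Corollary \ref{automatically_projective} to ensure that every term of a short exact sequence in $\HCon(R/K)$ is projective (so that $\mathrm{Tor}_1^R(C,R/\m)=0$ forces exactness after $-\otimes_R R/\m$) and that images and cokernels of morphisms are projective of constant rank (so that nonzero morphisms stay nonzero). This has the advantage of being self-contained and of making explicit exactly where the regularity hypothesis on $R$ enters; the paper's citation to \cite{deligne2} is quicker but treats the same content as a black box.
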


\begin{proof}
By construction, the functor $\forget$ is a fibre functor and so the
tensor cate\-gories $\HCon(R/K)$, $\ICon(R/K)$ and $\IConint(R/K)$
are Tannakian cate\-gories. If $R$ has a $K$-rational point, by\linebreak
\cite{deligne2}.2.8, $\forget_K$ is a fibre functor. This proves the
second part.
\end{proof}

By Tannakian duality, every neutral Tannakian category $\T$ over $K$
with fibre functor $\forget_K$ is equivalent to the category of finite
dimensional representations of a group scheme defined over $K$, called
the Tannaka group scheme (or fundamental group scheme)
$\Pi(\T,\forget_K)$ of $\T$. Furthermore, this group scheme is the
projective limit of all quotients
$G_V:=\Pi(\tgener{V}),\forget_K\restr{\tgener{V}})$, where $V$ ranges
over all objects of $\T$ and $\tgener{V}$ denotes the 
smallest Tannakian subcategory of $\T$ that contains $V$.

Using the Picard-Vessiot theory in the second part of this paper, we
obtain the following proposition:

\begin{prop}
Let $R$ have a $K$-rational point and let $K$ be algebraically
closed. Then the Tannaka group schemes $\Pi(\ICon(R/K),\forget_K)$ and
$\Pi(\IConint(R/K),\forget_K)$ are reduced group schemes.
\end{prop}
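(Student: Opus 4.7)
The plan is to reduce reducedness of the pro-algebraic group scheme to that of each of its finite-type quotients, and then to apply part (iv) of the Galois correspondence to a PPV-extension realising each such quotient. Since $\Pi(\T,\forget_K)$ is the projective limit of the $G_V:=\Pi(\tgener{V},\forget_K\restr{\tgener{V}})$, its coordinate ring is the direct limit of the $K[G_V]$, and direct limits of reduced $K$-algebras are reduced, so it suffices to show each $G_V$ is reduced. Moreover, $\IConint(R/K)\subset \ICon(R/K)$ is a full Tannakian subcategory, and Tannakian duality turns this inclusion into a faithfully flat quotient $\Pi(\ICon(R/K),\forget_K)\twoheadrightarrow\Pi(\IConint(R/K),\forget_K)$. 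Since quotients of reduced affine group schemes are reduced (their coordinate rings embed into the ambient ones as Hopf subalgebras), it is enough to prove the claim for $\Pi(\ICon(R/K),\forget_K)$.

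For a fixed $V\in\ICon(R/K)$, I would apply the Picard-Vessiot theory of Section \ref{pv-theory} over the $\theta$-field $F:=\Quot(R)$, yielding a PPV-ring $R_V$ with quotient field $E_V:=\Quot(R_V)$. Under the standing hypothesis that $K$ is algebraically closed in $R$, the ring of constants of $F$ is $K$; using the chosen $K$-rational point to pin down the fibre functor, the Tannakian-to-Picard-Vessiot comparison identifies $G_V$ canonically with $\G_V:=\uGal(R_V/F)$ as $K$-group schemes. By part (i) of Theorem \ref{galois_correspondence}, $E_V^{\G_V}=F$, and then part (iv) applied to $\H=\G_V\ideal\G_V$ asserts precisely that $\G_V$ is reduced if and only if $E_V/F$ is separable.

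The proposition is thereby reduced to the separability statement: for every $V\in\ICon(R/K)$, the PPV-extension $E_V/F$ is separable. This is the main obstacle, and it is precisely where iterativity (as opposed to a merely higher connection) enters. The iterative structure on $V$ propagates to an iterative higher derivation on $E_V/F$ closed under the $K$-action $a\mapsto a.\Phi$ of Section \ref{iterative_theory}, satisfying $\Phi^{(i)}\circ\Phi^{(j)}=\binom{i+j}{i}\Phi^{(i+j)}$. In characteristic $p=\ch(K)>0$, this identity forces $\Phi^{(p^k)}(z^p)=\Phi^{(p^{k-1})}(z)^p$; assuming a hypothetical $z\in E_V\setminus F$ with $z^{p^n}\in F$ and applying $\Phi^{(p^k)}$ inductively, one reaches a contradiction with the stability of $F$ under $\Phi$ by using Lemma \ref{it_eig}. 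This kind of separability argument parallels the corrected iterative differential setting of \cite{amano} and \cite{heiderich}, and adapts here because the full iterative datum on $V$ lifts to $R_V$ and $E_V$.
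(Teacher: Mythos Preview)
Your overall architecture matches the paper's: reduce to the finite-type quotients $G_V$, identify each with the Galois group scheme $\uGal(R_V/F)$ of a PPV-extension over $F=\Quot(R)$, and then invoke the equivalence ``$\G$ reduced $\Leftrightarrow$ $E/F$ separable'' from Theorem~\ref{galois_correspondence}(iv). The reduction of $\IConint$ to $\ICon$ via the quotient map is also exactly what the paper observes.

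The genuine gap is your final paragraph, the separability argument. You attempt to derive separability of $E_V/F$ from iterativity alone, via the identity $\Phi^{(p^k)}(z^p)=\Phi^{(p^{k-1})}(z)^p$ and an unspecified inductive contradiction. This cannot work: iterativity of the higher derivation on $F$ does \emph{not} force PPV-extensions to be separable. Example~\ref{exmp_group_schemes}(ii) and (iii) exhibit iterative $\theta$-fields $F\subset K((t))$ (with $\theta=\phi_t$, which is iterative) admitting purely inseparable PPV-extensions with nonreduced Galois group schemes $\mu_p\times\mu_p$ and $\alpha_p\times\alpha_p$. So no argument using only the iteration rule and Lemma~\ref{it_eig} can produce the contradiction you claim. (Note also that the identity $\Phi^{(p^k)}(z^p)=\Phi^{(p^{k-1})}(z)^p$ follows from $\Phi$ being a ring homomorphism in characteristic $p$, not from iterativity.) Your references to \cite{amano} and \cite{heiderich} do not help here: those papers \emph{add} the separability hypothesis to the Galois correspondence precisely because it can fail, they do not prove it holds.

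What the paper actually uses is a property specific to the \emph{universal} higher derivation on a regular ring over a perfect field: Proposition~\ref{R_l} (Frobenius compatibility) gives $\Ker(\d_F^{(1)})=F^p$. This feeds into Corollary~\ref{no_inseparable_ext}, whose minimal-polynomial argument shows that under this hypothesis every PPV-extension of $F$ is separable. In the counterexamples above, $\Ker(\theta_F^{(1)})\supsetneq F^p$ (e.g.\ $r_1^p\in\Ker(\theta_F^{(1)})\setminus F^p$), which is why separability fails there. You need to invoke Proposition~\ref{R_l} and Corollary~\ref{no_inseparable_ext} rather than attempt a bare iterativity argument.
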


\begin{proof}
If $\ch(K)=0$, then by general theory all group schemes are
reduced, and nothing has to be shown. So assume $\ch(K)=p>0$.

Using the equivalence of categories given in Thm. \ref{equiv_cat}  and
the identification in Rem. \ref{Fdivided}, we obtain from
\cite{santos}, 2.3.1 that the Tannaka group scheme associated to
$\IConint(R/K)$ is reduced, even a perfect group scheme. (The reducedness also
follows from the reducedness of $\Pi(\ICon(R/K),\forget_K)$, since
$\Pi(\IConint(R/K),\forget_K)$ is a quotient.)

Since $\Pi(\ICon(R/K),\forget_K)$ is the projective limit of all
$G_{(M,\nabla)}:=
\Pi(\tgener{(M,\nabla)},\forget_K\restr{\tgener{(M,\nabla)}})$, it
suffices to show that all 
$G_{(M,\nabla)}$ are reduced. Let $F$ be the quotient field of $R$ and
$E/F$ be a PPV-extension for $F\otimes_R M$ as defined in Section
\ref{pv-theory}, which exists since $K$ is algebraically closed
(cf. Remark \ref{rem_on_pv-rings}). Then $G_{(M,\nabla)}$ is
isomorphic to the Galois group scheme $\uGal(E/F)$
(cf. Rem. \ref{equivalence_of_galois_groups}). By Prop. \ref{R_l}, we have
$\Ker(\d_F^{(1)})=F^p$, and hence by Cor. \ref{no_inseparable_ext},
$\uGal(E/F)\isom G_{(M,\nabla)}$ is reduced.
\end{proof}

\begin{rem}
One might ask whether the inclusions in the chain of categories \linebreak
$\HCon(R/K)\supset \ICon(R/K)\supset
\IConint(R/K)$ are strict.\\
Clearly, $\HCon(R/K)\ne  \ICon(R/K)$, because if for example $M$
is a free
$R$-module of dimension $1$ with basis $b_1\in M$, every
$\omega=\sum_{j=0}^\infty \omega_j\in \Dif_{R/K}$
with  $\omega_0=1$ defines a higher connection
$\nabla:M\to\Dif_{R/K}\otimes_R M, b_1\mapsto \omega \otimes b_1$, but in
general this higher connection is not iterative, because if $\nabla$
is iterative, $\omega$ satisfies the condition
$$0=\left(- \difnabla\circ \nabla\right)^{(2)}(b_1)
=(2\omega_2-\omega_1^2+\d_\Dif^{(1)}(\omega_1))\otimes b_1.$$
(The only exception is the case when $R$ is algebraic over
$K$, because in this case $\Dif_{R/K}=R$ and hence all
categories above are equivalent to $\Mod(R)$).

The inclusion $\ICon(R/K)\supset \IConint(R/K)$ is also strict in
general, because in the next section we will see that in
characteristic zero, the category $\ICon(R/K)$ is equivalent to the
category of modules with (ordinary) connection over $R$ and
$\IConint(R/K)$ is equivalent to the category of modules with
integrable connection over $R$, and it is known that those two
categories are different if for example $R=K(t_1,t_2)$. However, it is
also known that every (ordinary) connection is integrable if $\ch(K)=0$
and $R$ is an algebraic function field in one variable over $K$. In
Section \ref{proj_system}, we will see that also $\ICon(R/K)=
\IConint(R/K)$, if $R$ is an algebraic function field in one
variable over $K$ and $\ch(K)=p$.
\end{rem}

\section{Characteristic Zero}\label{char_zero} 

For $\ch(K)=0$, in general one gets the usual constructions of
derivations, differentials and connections by restricting to the terms
of degree $1$. On the other hand these constructions can be uniquely
extended to iterative derivations and iterative
connections. Moreover integrable connections, i.\,e. connections which
preserve commutators of derivations, correspond to integrable
iterative connections. This will be proven in this section.\\
So throughout this section, $K$ is a field of characteristic zero and
$R$ is an integral domain which is a regular ring and the localisation
of a finitely generated $K$-algebra. Furthermore we assume that $R$
has a maximal ideal $\m\ideal R$, such that $R/\m$ is a finite
extension of $K$. 
$M$ denotes a finitely generated $R$-module.

\begin{prop}\label{der_itder} \
\begin{enumerate}
\item The map $$ \Der(R/K) \longrightarrow \ID_K(R),
\partial\mapsto \phi_{\partial},$$ given by
$$\phi_{\partial}(r):=\sum_{n=0}^\infty \frac{1}{n!}\partial^n(r)T^n$$
for all $r\in R$, 
is a bijection and the inverse map is given by $\phi\mapsto \phi^{(1)}$.\\
For a given derivation $\partial$ on $R$
and the corresponding iterative derivation $\phi_{\partial}$ the map 
\linebreak $\iterate: \Der_R(M) \to \ID(M,\phi_{\partial}),
\partial_M\mapsto \Phi_{\partial_M}$ given by
$$\Phi_{\partial_M}(m):=\sum_{n=0}^\infty \frac{1}{n!}\partial_M^n(m)T^n,$$
for all $m\in M$, 
is a bijection and the inverse map is given by $\Phi\mapsto \Phi^{(1)}$.
\item The $R$-module $(\Dif_{R/K})_1$ is canonically isomorphic to the
module of (ordinary) differentials $\Omega_{R/K}$ and $\d^{(1)}:R\to
(\Dif_{R/K})_1\isom \Omega_{R/K}$ is the universal derivation.
\item For every iterative connection $\nabla$ on $M$, the map
$\nabla^{(1)}:M\to (\Dif_{R/K})_1\otimes M\isom \Omega_{R/K}\otimes M$
is a connection on $M$ and every connection $\nabla^{(1)}$ on $M$
uniquely extends to an iterative connection on $M$. Furthermore,
$\nabla$ is an integrable iterative connection if and only if
$\nabla^{(1)}$ is an integrable connection.
\end{enumerate}
\end{prop}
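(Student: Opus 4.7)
For part (i), the plan is direct verification of the two reciprocal maps. Given a $K$-derivation $\partial$ on $R$, setting $\phi_\partial^{(n)}:=\tfrac{1}{n!}\partial^n$, both the higher-derivation product rule and the iterative identity $\phi_\partial^{(i)}\circ\phi_\partial^{(j)}=\binom{i+j}{i}\phi_\partial^{(i+j)}$ reduce to the generalized Leibniz rule $\partial^n(rs)=\sum_k\binom{n}{k}\partial^k(r)\partial^{n-k}(s)$ together with the numerical identity $\tfrac{1}{i!\,j!}=\binom{i+j}{i}/(i+j)!$. Conversely, for $\phi\in\ID_K(R)$ the degree-one instance of the product rule shows $\partial:=\phi^{(1)}$ is a derivation, and applying iterativity to the pair $(1,n-1)$ yields $n\phi^{(n)}=\phi^{(1)}\circ\phi^{(n-1)}$, so $\phi^{(n)}=\tfrac{1}{n!}\partial^n$ follows by induction. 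The module statement is exactly parallel with $\partial_M$ in place of $\partial$.

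For part (ii), the plan is to isolate the universal derivation inside the universal higher derivation. Restricting the generators and relations used to build $\Dif_{R/K}$ in the proof of Theorem \ref{diff_exists} to homogeneous degree one leaves precisely the defining relations of the module of K\"ahler differentials: additivity of $\d^{(1)}$, annihilation of $K$, and the Leibniz identity $\d^{(1)}(rs)=r\,\d^{(1)}s+s\,\d^{(1)}r$ (since in the product relation the only degree-one contribution is $\d^{(0)}r\cdot\d^{(1)}s+\d^{(1)}r\cdot\d^{(0)}s$). To establish the universal property, given any $K$-derivation $D:R\to N$ into an $R$-module, form the $R$-\cga\ $B:=R\oplus N$ with $B_0=R$, $B_1=N$ and $B_i=0$ for $i\geq 2$ (so $N^2=0$); the map $\psi(r):=r+D(r)$ is a higher derivation $R\to B$, hence by Theorem \ref{diff_exists} there is a unique homomorphism of \cgas\ $\tilde\psi:\Dif_{R/K}\to B$ with $\tilde\psi\circ\d_R=\psi$, and its degree-one component gives the required $R$-linear factorization of $D$ through $\d^{(1)}$.

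For part (iii), the forward direction is immediate by taking the degree-one component of $\nabla(rm)=\d_R(r)\cdot\nabla(m)$, which produces $\nabla^{(1)}(rm)=\d_R^{(1)}(r)\otimes m+r\,\nabla^{(1)}(m)$. For existence of the iterative extension, given a connection $D:M\to\Omega_{R/K}\otimes M$ define the $K$-linear degree-raising operator $\Delta:\Dif\otimes_R M\to\Dif\otimes_R M$ by $\Delta(\omega\otimes x):=\d_\Dif^{(1)}(\omega)\otimes x+\omega\cdot D(x)$. Well-definedness across the tensor relation $\omega r\otimes x=\omega\otimes rx$ follows from the Leibniz rule for $\d_\Dif^{(1)}$ on $\Dif$ and the connection rule for $D$. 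Setting $\nabla^{(n)}(m):=\tfrac{1}{n!}\Delta^n(1\otimes m)$, one verifies $\nabla$ is a higher $\d_R$-derivation using the higher Leibniz identity $\Delta^n(\omega\cdot v)=\sum_k\binom{n}{k}(\d_\Dif^{(1)})^k(\omega)\cdot\Delta^{n-k}(v)$ combined with $(\d_\Dif^{(1)})^k(r)=k!\,\d^{(k)}r$, which is a consequence of iterativity of $\d_\Dif$ at degree one; the analogous expansion on $\Dif\otimes M$ yields $\difnabla^{(n)}=\tfrac{1}{n!}\Delta^n$, from which the iterativity $\difnabla^{(i)}\circ\difnabla^{(j)}=\binom{i+j}{i}\difnabla^{(i+j)}$ is automatic. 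Uniqueness is forced since the same closed formula must hold for any iterative extension. For the integrability equivalence, unwinding $\nabla_{\psi_1\psi_2}=\nabla_{\psi_1}\nabla_{\psi_2}$ at degree one with $\psi_i=\phi_{\partial_i}$ recovers the classical integrability $[D_{\partial_1},D_{\partial_2}]=D_{[\partial_1,\partial_2]}$; conversely, the closed form $\nabla^{(n)}=\tfrac{1}{n!}\Delta^n$ reduces the higher-order compatibilities to this single degree-one identity.

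The main obstacle I expect is the existence construction in part (iii): checking that $\Delta$ descends to the tensor product over $R$ and that the iterated operators satisfy the needed higher Leibniz identities requires careful bookkeeping, and it is precisely at this step that characteristic zero enters in an essential way, via the invertibility of $n!$ in $K$ that legitimizes the recipe $\nabla^{(n)}=\tfrac{1}{n!}\Delta^n$ and the analogous decomposition of $\d_\Dif$.
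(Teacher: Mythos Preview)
Parts (i) and (ii) are correct and essentially identical to the paper's argument. Your construction in the first half of (iii), defining the iterative extension of an ordinary connection via $\nabla^{(n)}=\tfrac{1}{n!}\Delta^n$ with $\Delta$ the degree-one operator on $\Dif\otimes_R M$, is a clean and correct elaboration of what the paper only says is ``analogous to part (i).''

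However, your treatment of the integrability equivalence has two problems. First, the forward implication cannot be read off ``at degree one'': with $\psi_i=\phi_{\partial_i}$, the degree-one components of $\nabla_{\psi_1\psi_2}$ and $\nabla_{\psi_1}\nabla_{\psi_2}$ both equal $D_{\partial_1}+D_{\partial_2}$, so no information about the commutator appears there. The paper instead compares the degree-two components of $\nabla_{\phi_1\phi_2\phi_1^{-1}\phi_2^{-1}}$ and $\nabla_{\phi_1}\nabla_{\phi_2}\nabla_{\phi_1}^{-1}\nabla_{\phi_2}^{-1}$, using that the group commutator of the $\phi_i$ has vanishing degree-one part and degree-two part equal to $[\partial_1,\partial_2]$; this is where the classical curvature identity actually emerges. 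Your argument is easily repaired by moving to degree two.

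The more serious gap is the converse. The integrability condition for an iterative connection requires $\nabla_{\psi_1\psi_2}=\nabla_{\psi_1}\nabla_{\psi_2}$ for \emph{all} higher derivations $\psi_1,\psi_2\in\HD_K(R)$, not just iterative ones, and it is not at all clear how the closed formula $\nabla^{(n)}=\tfrac{1}{n!}\Delta^n$ reduces this to the single classical identity $[D_{\partial_1},D_{\partial_2}]=D_{[\partial_1,\partial_2]}$. The paper supplies a genuinely different and nontrivial ingredient here: it passes to the completion $\hat R_\m$, extends all higher derivations canonically, and invokes Katz's theorem (\cite{katz2}, Prop.~8.9) that an integrable connection over a complete local ring admits a basis $\vect{b}$ of horizontal sections. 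On such a basis $\nabla_\psi(\sum y_ib_i)=\sum\psi_e(y_i)b_i$ for every $\psi$, whence $\nabla_{\psi_1\psi_2}=\nabla_{\psi_1}\nabla_{\psi_2}$ follows immediately from $(\psi_1\psi_2)_e=\psi_{1,e}\psi_{2,e}$. This trivialisation step is the missing idea in your argument.
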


\begin{proof}
{\rm i)} Let $\partial\in \Der(R/K)$ be a derivation. Then for all $i,j\in\NN$:
$\frac{1}{i!}\partial^{i}\circ \frac{1}{j!}\partial^{j}=\binom{i+j}{i}
\frac{1}{(i+j)!}\partial^{i+j}.$ So $\phi_{\partial}$ is an iterative
derivation. On the other hand, for every iterative derivation $\phi$,
one obtains $\phi^{(k)}=\frac{1}{k!}(\phi^{(1)})^{k}$ for all
$k\in\NN$ by applying the formula
$\phi^{(i)}=\frac{1}{i}\phi^{(1)}\circ \phi^{(i-1)}$
inductively. Finally by Remark \ref{highder_formel}, for all
$r,s\in R$ we have $\phi^{(1)}(rs)=r\phi^{(1)}(s)+\phi^{(1)}(r)s$, \ie{}
$\phi^{(1)}\in \Der(R/K)$.\\
The bijection $\iterate: \Der_R(M) \to
\ID(M,\phi_{\partial})$ is shown analogously.

\noindent {\rm ii)} The construction of $(\Dif_{R/K})_1$ given in the
proof of Theorem 
\ref{diff_exists} is the same as the usual construction of
$\Omega_{R/K}$ (e.g. in \cite{hartshorne}, Ch. II.8).

\noindent {\rm iii)}
The bijection of the iterative connections and
the ordinary connections is shown analogous to the first part. So
we only prove the equivalence of the integrability
conditions. 
Let $\partial_1,\partial_2\in \Der(R/K)$ be derivations,
let $\phi_i:=\phi_{\partial_i}$ ($i=1,2$) be the corresponding iterative
derivations, and let $\nabla$ be an iterative connection on
$M$. By an explicit calculation one gets
$$\left(\phi_1\phi_2\phi_1^{-1}\phi_2^{-1}\right)^{(1)}=0 \quad
\text{and} \quad
\left(\phi_1\phi_2\phi_1^{-1}\phi_2^{-1}\right)^{(2)}= \partial_1\circ
\partial_2- \partial_2\circ \partial_1=\left[ \partial_1,\partial_2\right].$$
From this and the iterativity condition of $\nabla$, one
computes that
$$\nabla_{\phi_1\phi_2\phi_1^{-1}\phi_2^{-1}}^{(2)}=
\left(\nabla^{(1)}\right)_{\left[ \partial_1,\partial_2\right]}.$$ 
(The
last expression means that we  apply the ordinary connection
$\nabla^{(1)}$ to the derivation $\left[
  \partial_1,\partial_2\right]$.) On the other hand, by quite the same
calculation as before one obtains 
$$\left(\nabla_{\phi_1}\nabla_{\phi_2}\nabla_{\phi_1}^{-1}
  \nabla_{\phi_2}^{-1}\right)^{(2)}=
\nabla_{\phi_1}^{(1)}\circ \nabla_{\phi_2}^{(1)} -
  \nabla_{\phi_2}^{(1)}\circ \nabla_{\phi_1}^{(1)} =
\left[
  \left(\nabla^{(1)}\right)_{\partial_1},
  \left(\nabla^{(1)}\right)_{\partial_2}\right].$$   
So if $\nabla$ is an integrable iterative connection, then
$\nabla_{\phi_1\phi_2\phi_1^{-1}\phi_2^{-1}}^{(2)}=
  \left(\nabla_{\phi_1}\nabla_{\phi_2}\nabla_{\phi_1}^{-1} 
  \nabla_{\phi_2}^{-1}\right)^{(2)}$ and hence $\nabla^{(1)}$ is an
  integrable connection.

For the converse, consider the complete local ring $\hat{R}_\m$. We
first note that for an arbitrary \Alg{R} $B$, every higher derivation
$\psi\in\HD_K(R,B)$ can be extended canonically to a higher derivation 
$\psi_e\in\HD_K(\hat{R}_\m,\hat{R}_\m \otimes B)$ in the following
way: Every homogeneous component $\psi^{(k)}$ ($k\in\NN$) can uniquely
be extended to the localisation $R_\m$ (see
Prop. \ref{unique_extension}). This extension is continuous with
respect to the $\m$-adic topology, since for all $i\in\NN$,
$\psi^{(k)}(\m^i)\subseteq \m^{i-k}(R_\m \otimes B_k)$. So
$\psi^{(k)}$ can be uniquely extended to a map
$\psi_e^{(k)}:\hat{R}_\m\to \hat{R}_\m \otimes B_k$, which is
continuous with respect to the $\m$-adic topology. One easily verifies
that indeed $\psi_e:=\sum_{k=0}^\infty \psi_e^{(k)}:\hat{R}_\m\to
\hat{R}_\m \otimes B$ is a higher derivation.\\
Since the extension is canonical, we obtain the identities
$(\id_{\hat{R}_\m}\otimes \widetilde{\psi})\circ \d_{R,e}=\psi_e$
($\d_{R,e}$ denotes the extension of the universal derivation $\d_R$)
and $(\psi_1\psi_2)_e=\psi_{1,e}\psi_{2,e}$ for all $\psi_1,\psi_2\in
\HD_K(R)$.\\ 
Now let $\nabla$ be an iterative connection such that $\nabla^{(1)}$
is an integrable connection. By \cite{katz2}, Prop. 8.9, the
$\hat{R}_\m$-module 
$\hat{R}_\m \otimes_R M$ is a trivial differential module,
i.\,e., there is an $\hat{R}_\m$-basis $\vect{b}=(b_1,\dots, b_n)$ of
$\hat{R}_\m \otimes_R M$ such that $\nabla^{(1)}(\vect{b})=0$, where
$\nabla^{(1)}$ is extended to $\hat{R}_\m \otimes_R M$ in the same
manner as the higher derivations. Since $\nabla$ is iterative, this
implies $\nabla^{(k)}(\vect{b})=0$ for all $k>0$.\\
Hence for all $\psi_1,\psi_2\in \HD_K(R)$ and all vectors $\vect{y}\in
\left.\hat{R}\right._{\!\m}^{n}$, s.t. $\sum y_ib_i \in M$, we have:
$$\nabla_{\psi_1\psi_2}(\sum y_ib_i)=
(\widetilde{\psi_1\psi_2}\otimes \id)\left(\sum
  \d_{R,e}(y_i)\nabla(b_i)\right)
= (\widetilde{\psi_1\psi_2}\otimes \id)\left(\sum
  \d_{R,e}(y_i)\cdot b_i\right)= \sum (\psi_1\psi_2)_e(y_i)\cdot
b_i,$$
and
$$\nabla_{\psi_1}\nabla_{\psi_2}\left(\sum y_ib_i\right)=
\sum \psi_{1,e}[[T]](\psi_{2,e}(y_i))\cdot b_i = \sum
(\psi_1\psi_2)_e(y_i)\cdot b_i.$$ 
Hence $\nabla_{\psi_1\psi_2}=\nabla_{\psi_1}\nabla_{\psi_2}$,
i.\,e., $\nabla$ is an integrable iterative connection.
\end{proof}

As a consequence of the previous proposition, we obtain

\begin{thm}
Under the assumptions given above, the category $\ICon(R/K)$
(resp. $\IConint(R/K)$) 
of finitely generated $R$-modules with iterative connection
(resp. integrable iterative connection) and the category 
of finitely generated $R$-modules with connection (resp. integrable
connection) are equivalent.
\end{thm}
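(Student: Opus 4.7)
The plan is to deduce the equivalence of categories directly from Proposition \ref{der_itder}(iii), which already does most of the work on the level of objects: an iterative connection is determined by its first homogeneous component, and every ordinary connection extends uniquely to an iterative one. The two functors will be the ``truncation'' $F:\ICon(R/K)\to\{\text{modules with connection}\}$ sending $(M,\nabla)\mapsto(M,\nabla^{(1)})$ (using the isomorphism $(\Dif_{R/K})_1\isom \Omega_{R/K}$ from Prop.\ \ref{der_itder}(ii)) and its quasi-inverse $G$ given by the unique iterative extension from Prop.\ \ref{der_itder}(iii). On the level of objects, $F\circ G$ and $G\circ F$ are equal to the identity by the bijection asserted in that proposition, so there is nothing to prove there.

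Both functors act as the identity on underlying $R$-module homomorphisms, so the main point is to verify that this assignment is well-defined on morphisms in both directions. That $F$ sends morphisms to morphisms is immediate: if $f:(M_1,\nabla_1)\to(M_2,\nabla_2)$ intertwines the full iterative connections, then extracting the $k=1$ component of $(\id_\Dif\otimes f)\circ\nabla_1=\nabla_2\circ f$ shows $f$ intertwines $\nabla_1^{(1)}$ and $\nabla_2^{(1)}$. The nontrivial direction is that $G$ is well-defined on morphisms: given $f$ intertwining the ordinary connections, one must show that $f$ automatically intertwines every higher component $\nabla_i^{(k)}$.

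This is the main obstacle, and my plan is to handle it via an explicit recursive formula. From the iterativity condition $\difnabla^{(1)}\circ\difnabla^{(k-1)}=k\,\difnabla^{(k)}$ (the $i=1,j=k-1$ case) applied to $1\otimes m\in\Dif\otimes M$, together with $\difnabla|_{1\otimes M}=\nabla$, induction gives
\[
\nabla^{(k)}(m)=\tfrac{1}{k!}\bigl(\difnabla^{(1)}\bigr)^{k-1}\bigl(\nabla^{(1)}(m)\bigr),
\]
which uses invertibility of $k!$ in characteristic zero. Because $\difnabla^{(1)}(\omega\otimes m)=\d_\Dif^{(1)}(\omega)\otimes m+\omega\cdot\nabla^{(1)}(m)$ depends only on $\nabla^{(1)}$ (and the ambient $\d_\Dif^{(1)}$, which is the same on both sides), the above formula expresses $\nabla_i^{(k)}$ entirely in terms of $\nabla_i^{(1)}$ and data on $R$. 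Hence $f$ intertwining $\nabla_1^{(1)}$ and $\nabla_2^{(1)}$ forces it to intertwine each $\nabla_i^{(k)}$, and thus the whole iterative connection.

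Finally, the integrable subcategories: restricting $F$ and $G$ to $\IConint(R/K)$ and to modules with integrable connection, the very last assertion of Prop.\ \ref{der_itder}(iii) states that $\nabla$ is integrable iterative if and only if $\nabla^{(1)}$ is integrable, so the restricted functors remain mutually quasi-inverse equivalences. No further morphism check is required, since being a morphism only involves commuting with $\nabla$, not the integrability condition itself. This gives the equivalence in both the iterative and integrable iterative settings, as claimed.
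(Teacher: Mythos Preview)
Your proposal is correct and is essentially the approach the paper intends: the paper gives no explicit proof of this theorem, simply stating it as a consequence of Proposition~\ref{der_itder}, and your argument is exactly the natural way to unpack that reference. The only content not already contained in Proposition~\ref{der_itder} is the morphism check for $G$, and your recursive formula $\nabla^{(k)}=\tfrac{1}{k!}\bigl(\difnabla^{(1)}\bigr)^{k-1}\circ\nabla^{(1)}$ (the module analogue of $\phi^{(k)}=\tfrac{1}{k!}(\phi^{(1)})^k$ used in the proof of Proposition~\ref{der_itder}(i)) together with the observation that $\difnabla^{(1)}$ depends only on $\nabla^{(1)}$ handles this cleanly.
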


We end this section with a comparison of integrable iterative
connections and stratifications (cf. \cite{bert_ogus}, Def. 2.10):
From the previous theorem and the fact that for a smooth ring in
characteristic zero a stratification is equivalent to an integrable
connection (cf. \cite{bert_ogus}, Thm 2.15 for a sketch of the proof),
we deduce 
the following corollary. In the next section, we will see that the
corollary also holds if the characteristic of $K$ is not zero
(cf. Cor. \ref{conn_strat_pos}).

\begin{cor}\label{conn_strat_zero}
Let $R$ be smooth over $K$, then the category $\IConint(R/K)$ is
equivalent to the category of stratified modules over $R$.
\end{cor}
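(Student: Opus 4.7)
The plan is to obtain the equivalence by composing two already-available equivalences of categories, using integrable connections in the classical sense as an intermediate object. Concretely, the diagram of equivalences I want to assemble is
\[
\IConint(R/K) \;\simeq\; \{\text{modules with integrable connection over }R\} \;\simeq\; \{\text{stratified modules over }R\}.
\]
The left-hand equivalence is precisely the content of the previous theorem (the second part of Proposition \ref{der_itder} specialised to the integrable case), whose hypotheses are met since we are in characteristic zero and $R$ is smooth, hence in particular regular, a localisation of a finitely generated $K$-algebra, and smoothness of $R$ over $K$ (with $K$ perfect of characteristic zero) guarantees the existence of a closed point with residue field finite over $K$.

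The right-hand equivalence is the classical theorem of Grothendieck, recorded in \cite{bert_ogus}, Thm.~2.15: over a smooth $K$-algebra in characteristic zero, giving a stratification on a finitely generated $R$-module is the same as giving an integrable connection on it (the stratification is recovered from the connection by formally exponentiating, i.e. by a Taylor-series construction, and conversely the degree-one part of the stratification recovers the connection). Since $R$ is assumed smooth, this theorem applies verbatim, giving an equivalence of categories compatible with tensor products and the natural forgetful functors to $\Mod(R)$.

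Composing these two equivalences yields the desired equivalence $\IConint(R/K) \simeq \{\text{stratified modules over }R\}$. There is essentially no obstacle here beyond invoking the two cited theorems; the only minor subtlety worth checking is that the two equivalences, when composed, are implemented by mutually inverse functors that preserve the $R$-module structure and the relevant integrability/compatibility data, but this is immediate because both equivalences identify the underlying $R$-module and both pass through the classical notion of integrable connection as a common pivot. The proof therefore reduces to a one-line citation of the preceding theorem together with \cite{bert_ogus}, Thm.~2.15.
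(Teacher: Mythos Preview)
Your proposal is correct and follows exactly the same route as the paper: the corollary is deduced by composing the equivalence between $\IConint(R/K)$ and modules with integrable connection (the preceding theorem) with the classical equivalence between integrable connections and stratifications over a smooth ring in characteristic zero (\cite{bert_ogus}, Thm.~2.15). The paper states this in the sentence immediately preceding the corollary, and your write-up simply unpacks that sentence.
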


\section{Positive Characteristic}\label{proj_system}

In this section, we consider the case that $K$ has positive
characteristic $p$. Contrary to characteristic zero, iterative
derivations and iterative connections are not longer determined by the
term of degree $1$. Moreover, not every
derivation $\partial\in\Der(R/K)$ can be extended to an iterative
derivation $\phi$ with $\phi^{(1)}=\partial$, because the conditions on
an iterative derivation imply $(\phi^{(1)})^p=p!\cdot \phi^{(p)}=0$,
\ie{}, at least $\partial$ has to be nilpotent.

But there are some other structural properties: The main result is
that every module with integrable 
iterative connection gives rise to a projective system and vice versa,
similar to the correspondence of projective systems and iterative
differential modules
over function fields given in \cite{mat_hart}, Ch.\,2.2. In fact, when
$R$ is an algebraic function field in one variable, the projective
systems defined here are equal to those defined by 
Matzat and van der Put, which shows that in this
case the categories $\ICon(R/K)$, $\IConint(R/K)$, ${\bf Proj}_R$ and
${\bf ID}_R$ are equivalent. (Here ${\bf Proj}_R$ denotes the category
of projective systems over $R$ and ${\bf ID}_R$ denotes the category
of ID-modules as given in \cite{mat_hart}).

So in this section, let $K$ be a perfect field of characteristic $p>0$ and
let $R$ be an integral domain which is a regular ring and the
localisation of a finitely generated $K$-algebra. Furthermore let
$t_1,\dots, t_m$ denote a separable transcendence basis for $R$, \ie{}
$\Quot(R)$ is a separable algebraic extension of the rational function field
$K(t_1,\dots, t_m)$.\footnote{This includes the case $m=0$, although
  in this case everything becomes trivial.}

$R$ has a natural sequence of $K$-subrings $(R_l)_{l\in\NN}$ given by
$R_l:=R^{p^l}$.
The following proposition gives a characterisation of
this sequence by the higher differential:

\begin{prop}\label{R_l} {\bf (Frobenius Compatibility)}
For all $l\in\NN$: 
$$R_l=\bigcap_{0<j<p^l} \Ker(\d_R^{(j)}).$$ 
\end{prop}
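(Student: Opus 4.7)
The plan is to prove the two inclusions separately. For the easy inclusion $R^{p^l} \subseteq \bigcap_{0<j<p^l}\Ker(\d_R^{(j)})$, I use that $\d_R\colon R \to \Dif_{R/K}$ is a $K$-algebra homomorphism. For $s \in R$ we have $\d_R(s^{p^l}) = \d_R(s)^{p^l}$. Since $\ch(K) = p$, the Frobenius $x \mapsto x^{p^l}$ is additive; by continuity in the filtration topology (it sends $(\Dif_{R/K})_{\geq k}$ into $(\Dif_{R/K})_{\geq kp^l}$), this extends to the convergent series $\d_R(s) = \sum_k \d_R^{(k)}(s)$, yielding
$$\d_R(s^{p^l}) \;=\; \sum_{k \geq 0} \d_R^{(k)}(s)^{p^l}.$$
Each summand is homogeneous of degree $kp^l$, so only degrees divisible by $p^l$ appear, and in particular $\d_R^{(j)}(s^{p^l}) = 0$ whenever $0 < j < p^l$.

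For the reverse inclusion I would induct on $l$. The base case $l = 1$ reduces to $\Ker(\d_R^{(1)}) \subseteq R^p$: the degree-one component $(\Dif_{R/K})_1$ is canonically isomorphic to $\Omega_{R/K}$, with $\d_R^{(1)}$ corresponding to the universal derivation (this is immediate from the presentation in the proof of Theorem \ref{diff_exists}); since $K$ is perfect and $R$ is regular and finitely generated over $K$, $R$ is smooth over $K$, and the classical theorem of Cartier then gives $\Ker(d\colon R \to \Omega_{R/K}) = R^p$. (Alternatively, one may localize at each maximal ideal, invoke Theorem \ref{dif_formula}(c) to write $\d_R^{(1)}(r) = \sum_j \phi_{t_j}^{(1)}(r)\,\d^{(1)}t_j$, deduce the statement by passage to the completion $\widehat{R_\m} \isom (R/\m)[[t_1,\dots,t_m]]$, and descend by normality of $R$.)

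For the inductive step $l \Rightarrow l+1$, suppose $\d_R^{(j)}(r) = 0$ for all $0 < j < p^{l+1}$. In particular this holds for $0 < j < p^l$, so by induction $r = s^{p^l}$ with $s \in R$. The formula from the easy direction gives $\d_R^{(kp^l)}(s^{p^l}) = \d_R^{(k)}(s)^{p^l}$, and for $1 \leq k \leq p-1$ the index $kp^l$ lies in $[p^l, p^{l+1})$, so $\d_R^{(k)}(s)^{p^l} = 0$ in $\Dif_{R/K}$. To conclude $\d_R^{(k)}(s) = 0$, I would pass to $Q := \Quot(R)$: by Corollary \ref{dif_projective} each component $(\Dif_{R/K})_k$ is finitely generated projective over the domain $R$, hence torsion-free, and so embeds into $Q \otimes_R (\Dif_{R/K})_k$. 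By Proposition \ref{isom_of_diff}(b) (applied to the formally étale localization $R \to Q$) this target equals $(\Dif_{Q/K})_k$. Since $R$ admits a separable transcendence basis $t_1,\dots,t_m$ and $Q$ is a finitely generated field extension of $K$, $Q$ is finite separable over $K(t_1,\dots,t_m)$, and by Theorem \ref{dif_formula}(b) the algebra $\Dif_{Q/K}$ is the completion of a polynomial algebra over $Q$, hence an integral domain. Thus $\d_R^{(k)}(s)^{p^l} = 0$ forces $\d_R^{(k)}(s) = 0$ for each $k = 1,\dots,p-1$; by the base case $s \in R^p$, and therefore $r = s^{p^l} \in R^{p^{l+1}}$.

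The main obstacle is the final \emph{domain step} --- deducing $\d_R^{(k)}(s) = 0$ from $\d_R^{(k)}(s)^{p^l} = 0$. The algebra $\Dif_{R/K}$ need not be a domain globally, so the argument requires combining the projectivity of its homogeneous components (Corollary \ref{dif_projective}) with the explicit description of $\Dif_{Q/K}$ furnished by a separable transcendence basis (Theorem \ref{dif_formula}(b)). The base case, though essentially classical (Cartier's theorem for smooth algebras over a perfect field), is the other nontrivial ingredient and is what first forces the assumption that $K$ be perfect.
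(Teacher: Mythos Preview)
Your proof is correct and follows essentially the same route as the paper's: the easy inclusion via multiplicativity of $\d_R$, then induction using the identity $\d_R^{(p^{l})}(s^{p^{l}}) = \bigl(\d_R^{(1)}(s)\bigr)^{p^{l}}$ together with the fact that $\Dif_{\Quot(R)/K}$ is reduced (which you make explicit and the paper leaves tacit). The only cosmetic difference is that the paper starts the induction at $l=0$ and argues the key step contrapositively---if $s\notin R^p$ then $s$ extends to a separating transcendence basis of $\Quot(R)$, so $\d_R^{(1)}(s)\ne 0$---thereby absorbing your Cartier base case into the general inductive step.
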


\begin{proof}
Since $\d_R$ is a homomorphism of algebras,
$\d_R(R_l)=\d_R(R^{p^l})\subset (\Dif_{R/K})^{p^l}$ and therefore
$\d_R^{(j)}(r)=0 \ (0<j<p^l)$ for all $r\in R_l$. The other inclusion is
shown inductively: The case $l=0$ is trivial. Now let $r\in R$ satisfy
$\d_R^{(j)}(r)=0$ for $0<j<p^l$. By induction hypothesis $r\in
R_{l-1}$. So there exists $s\in R$ with $s^{p^{l-1}}=r$. If $s\not\in
R^p$, then $s$ is a separating element of $R$ and we can find
se\-pa\-ra\-ting variables $s=s_1,s_2,\dots,s_m$ for $R$, \ie{}
$\Quot(R)/K(s_1,\dots,s_m)$ is a finite separable extension. By
applying Prop. \ref{isom_of_diff} and Theorem
\ref{dif_formula}(b), we see that $\d_R^{(1)}(s)\ne 0$. And so 
$$0\ne \left(\d_R^{(1)}(s)\right)^{p^{l-1}}=
\d_R^{(p^{l-1})}\left(s^{p^{l-1}}\right) = \d_R^{(p^{l-1})}(r),$$
which is a contradiction. So $s\in R^p$ and $r\in R_l$.
\end{proof}

In the case of $R$ being an algebraic function field in one variable,
it was shown by F. K. Schmidt (see \cite{hasse_schmidt}, Thm. 10 and
15) that for 
an iterative derivation $\phi\in\ID_K(R)$ satisfying $\phi^{(1)}\ne
0$, we have $R^{p^l}= \bigcap_{0<j<p^l} \Ker(\phi^{(j)})$.
So in this case we obtain the same sequence of subfields, when
``only'' looking at an iterative derivation instead of the universal
derivation.

\begin{defn}
A {\markdef Frobenius compatible projective system over $R$}
(or an {\markdef Fc-projective system over $R$} for short) is a sequence
$(M_l,\varphi_l)_{l\in\NN}$ with the following properties:
\begin{enumerate}
\item For all $l\in\NN$, $M_l$ is a finitely generated $R_l$-module.
\item $\varphi_l:M_{l+1}\hookrightarrow M_l$ is a monomorphism of
$R_{l+1}$-modules that extends to an isomorphism
$\id_{R_l} \otimes \varphi_l:R_l\otimes_{R_{l+1}}M_{l+1}\to M_l$.
\end{enumerate}
A {\markdef morphism $\alpha:(M_l,\varphi_l)\to
(M'_l,\varphi'_l)$ of Fc-projective systems over $R$ } is a sequence
$\alpha=(\alpha_l)_{l\in\NN}$ of homomorphisms of modules
$\alpha_l:M_l\to M'_l$ satisfying $\varphi'_l\circ
\alpha_{l+1}=\alpha_l\circ \varphi_l$.
\end{defn}

\begin{rem}\label{Fdivided}
An Fc-projective system $(M_l,\varphi_l)_{l\in\NN}$ over $R$ is nothing
else than a flat bundle on $\Spec(R)$ (cf. \cite{gieseker}, Def. 1.1)
resp. an F-divided sheaf on $\Spec(R)$ (cf. \cite{santos}, Def. 4),
if one identifies $R_l=R^{p^l}$ 
with $R$ via the Frobenius homomorphism ${\bf F}_l:R\to R_l,x\mapsto
x^{p^l}$. Then all $M_l$
can be regarded as $R$-modules and the maps 
$\id_{R_l} \otimes\varphi_l:R_l\otimes_{R_{l+1}}M_{l+1}\to M_l$ become
$R$-linear isomorphisms $\sigma_l:{\bf F}_1^*(M_{l+1})\to M_l$ from the
Frobenius pullback of $M_{l+1}$ to $M_l$,
i.\,e. $(M_l,\sigma_l)_{l\in\NN}$ is a flat bundle on $\Spec(R)$.
\end{rem}

\begin{prop}
Every Fc-projective system $(M_l,\varphi_l)_{l\in\NN}$ over $R$ defines
an integrable iterative connection $\nabla$ on $M:=M_0$ satisfying
$$\bigcap_{0<j<p^l} \Ker(\nabla^{(j)})=\left(\varphi_0\circ \dots \circ
\varphi_{l-1}\right)(M_l).$$
For a morphism $(\alpha_l)_{l\in\NN}:(M_l,\varphi_l)\to
(M'_l,\varphi'_l)$ of Fc-projective systems over $R$, the homomorphism
$\alpha_0:M=M_0\to M'=M'_0$ then is a morphism of modules 
with higher connection.
\end{prop}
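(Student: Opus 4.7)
The idea is to descend the universal higher derivation $\d_R$ to $M$ using the Frobenius compatibility built into the projective system. Iterating the given isomorphism $R_l \otimes_{R_{l+1}} M_{l+1} \xrightarrow{\sim} M_l$ produces an isomorphism $\mu_l\colon R \otimes_{R_l} M_l \xrightarrow{\sim} M$ sending $r \otimes m_l$ to $r \cdot (\varphi_0 \circ \dots \circ \varphi_{l-1})(m_l)$; write $\tilde{m}_l := \mu_l(1 \otimes m_l)$ and $\tilde{M}_l := \mu_l(M_l) \subseteq M$. For each $j < p^l$ I will define
\[
\nabla^{(j)}(r \otimes_{R_l} m_l) := \d_R^{(j)}(r) \otimes_R \tilde{m}_l \in (\Dif_{R/K})_j \otimes_R M
\]
and verify compatibility across different $l$. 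Proposition~\ref{R_l} is the essential input: well-definedness under the relation $rs \otimes m_l = r \otimes sm_l$ for $s \in R_l$ reduces, via the Leibniz formula, to $\d_R^{(k)}(s) = 0$ for $0 < k < p^l$, which is exactly that proposition (combined with the $R_l$-linearity of the $\varphi_i$, which gives $s\tilde{m}_l = \widetilde{sm_l}$). The same mechanism gives the compatibility between the definitions at levels $l$ and $l+1$, so the components assemble into a single map $\nabla\colon M \to \Dif_{R/K} \otimes_R M$.

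The higher-connection axioms are then immediate. By inspection $(\eps \otimes \id) \circ \nabla = \id_M$, and $\nabla(rm) = \d_R(r) \nabla(m)$ reduces component-wise to the Leibniz rule for $\d_R$. The kernel formula also follows directly from the construction: for $x = \tilde{m}_l$ one has $\nabla^{(j)}(x) = \d_R^{(j)}(1) \otimes x = 0$ whenever $0 < j < p^l$, giving $\tilde{M}_l \subseteq \bigcap_{0<j<p^l}\Ker(\nabla^{(j)})$. For the reverse inclusion I work locally (Corollary~\ref{automatically_projective} will apply once $\nabla$ is in hand, and projectivity of $M_l$ over $R_l$ can be obtained from an analogous local argument), choose an $R_l$-basis of $M_l$ to obtain an $R$-basis $\{\tilde{e}_\alpha\}$ of $M$, expand $x = \sum r_\alpha \tilde{e}_\alpha$, and note that the vanishing of $\nabla^{(j)}(x) = \sum \d_R^{(j)}(r_\alpha) \otimes \tilde{e}_\alpha$ for all $0 < j < p^l$ forces $r_\alpha \in R_l$ by Proposition~\ref{R_l}, hence $x \in \tilde{M}_l$.

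For iterativity and integrability the strategy is to reduce everything modulo $T^{p^l}$ to properties of $\d_R$ already established. Writing any $m \in M$ as $m = \sum r_i \tilde{y}_i$ with $\tilde{y}_i \in \tilde{M}_l$, the defining formula combined with the kernel calculation gives
\[
\nabla_\psi(m) \equiv \sum_i \psi(r_i) \tilde{y}_i \pmod{T^{p^l} M[[T]]}
\]
for every $\psi \in \HD_K(R)$. Applying this to $\psi_1 \psi_2$ and to $\nabla_{\psi_1}[[T]] \circ \nabla_{\psi_2}$, and invoking the identity $\psi_1 \psi_2 = \psi_1[[T]] \circ \psi_2$ from Definition~\ref{komp}, yields $\nabla_{\psi_1 \psi_2}(m) \equiv \nabla_{\psi_1}\nabla_{\psi_2}(m) \pmod{T^{p^l}}$ for every $l$, hence equality; this is integrability. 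A parallel modular calculation, together with the iterativity of $\d_\Dif$ established after Theorem~\ref{d_Dif}, gives $(a.\difnabla) \circ (b.\nabla) \equiv (a+b).\nabla$ in degrees $< p^l$ for all $a, b \in K^{\sep}$ and all $l$, which is the characterization of iterative higher connections from Section~\ref{iterative_theory}.

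Finally, functoriality is straightforward: the relation $\varphi'_l \circ \alpha_{l+1} = \alpha_l \circ \varphi_l$ implies that $\alpha_0$ intertwines the two descent isomorphisms (and sends $\tilde{M}_l$ into $\tilde{M}'_l$ compatibly), after which the component-wise formulas for $\nabla$ and $\nabla'$ immediately give $(\id_\Dif \otimes \alpha_0) \circ \nabla = \nabla' \circ \alpha_0$. The main technical obstacle is the bookkeeping in the iterativity/integrability arguments: one must control arbitrarily high degrees in $T$, forcing the passage to arbitrarily large levels $l'$ of the projective system and careful truncation arguments that rely on Proposition~\ref{R_l} to keep the calculations coherent across levels.
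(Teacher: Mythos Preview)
Your proposal is correct and follows essentially the same approach as the paper. The paper's $\chi_l$ is your $\mu_l$; the paper defines $\nabla^{(k)}$ as $(\id_\Dif\otimes\chi_l)\circ(\d_R^{(k)}\otimes\id_{M_l})\circ\chi_l^{-1}$, notes well-definedness via $R_l$-linearity of $\d_R^{(k)}$ (which is exactly your appeal to Proposition~\ref{R_l}), argues that the integrable iterative conditions hold ``modulo degrees $\geq p^l$ since $\d_R$ is an integrable iterative connection'' (your modular truncation argument spelled out), and deduces the kernel formula from projectivity of the $M_l$ via $\Ker(\d_R^{(j)}\otimes\id_{M_l})=\Ker(\d_R^{(j)})\otimes_{R_l}M_l$ (your local basis computation). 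Your write-up is more explicit in several places---particularly the Leibniz-rule justification of well-definedness and the integrability calculation---but there is no substantive difference in strategy.
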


\begin{proof} 
The proof is similar to the construction of a
stratification related to a flat bundle in the proof of
\cite{gieseker}, Thm. 1.3.

In order to define $\nabla^{(k)}$, choose $l\in\NN$ such that
$p^l>k$ and let $\chi_l:R\otimes_{R_l} M_l \xrightarrow{\isom} M$ be
the composition of the isomorphisms $\id_R\otimes
\varphi_j:R\otimes_{R_{j+1}} M_{j+1} \to R\otimes_{R_j} M_j$ ($0\leq
j<l$). Then we define $\nabla^{(k)}$ to be the composition:
$$\nabla^{(k)}: M \xrightarrow{\chi_l^{-1}} R\otimes_{R_l} M_l
\xrightarrow{\d_R^{(k)} \otimes \id_{M_l}} \left(\Dif_{R/K}\right)_k
\otimes_{R_l} M_l \xrightarrow{\id_{\Dif}\otimes \chi_l}
\left(\Dif_{R/K}\right)_k \otimes_{R} M.$$
This is well defined, because $\d_R^{(k)}$ is $R_l$-linear, and is
also independent of the chosen $l$. The definition also shows that the
necessary conditions for $\nabla$ being an integrable iterative
connection are fulfilled modulo degrees $\geq p^l$, since $\d_R$ is an
integrable iterative connection. As $l$ can be chosen arbitrarily
large, $\nabla$ fulfills all conditions for being an integrable
iterative connection.

It remains to show that $\bigcap_{0<j<p^l}
\Ker(\nabla^{(j)})=\left(\varphi_0\circ \dots \circ 
\varphi_{l-1}\right)(M_l)$. Since we have just constructed an
iterative connection on $M$, by Corollary
\ref{automatically_projective}, $M$ is projective and by the same
argument, all $M_l$ are projective.
Hence $\Ker(\d_R^{(j)}\otimes \id_{M_l})=\Ker(\d_R^{(j)})\otimes_{R_l}
M_l$ for all $j<p^l$ and so
$$\bigcap_{0<j<p^l} \Ker(\nabla^{(j)})
= \chi_l(R_l\otimes_{R_l} M_l)
=\left(\varphi_0\circ \dots \circ \varphi_{l-1}\right)(M_l).$$

Finally, let $(\alpha_l)_{l\in\NN}:(M_l,\varphi_l)\to
(M'_l,\varphi'_l)$ be a morphism of Fc-projective systems over $R$. We
have to show that $\nabla' \circ \alpha_0= (\id_\Dif \otimes \alpha_0)\circ
\nabla$, or equivalently, that for all $k\in\NN$ 
$$\nabla'^{(k)} \circ \alpha_0=(\id_\Dif \otimes
\alpha_0)\circ \nabla^{(k)}.$$
But the last condition is easily seen to hold by choosing $l\in\NN$
such that $p^l>k$ and by using the definition of the iterative
connections above.
\end{proof}

In what follows, we will show that the converse is also true, \ie{}
that a
module with integrable iterative connection gives rise to an Fc-projective
system over $R$. For this, we consider the quotient field
$F:=\Quot(R)$ of $R$ and a monomial ordering on
$\Dif_{F/K}=F[[\d^{(i)}t_j]]$, namely the lexicographical order, where
the variables are ordered by 
$\d^{(i_1)}t_{j_1}>\d^{(i_2)}t_{j_2}$ if $i_1>i_2$ or if $i_1=i_2$ and
$j_1>j_2$. The leading term of $\omega\in\Dif_{F/K}$ (if it exists) is
then denoted by $\LT(\omega)$.

\begin{lem}
Let $\omega\in \Dif_{F/K}$ be homogeneous of degree $p^l$ and $\omega\not\in
F\Dif_{F/K}^{p^l}$. Let $\d^{(i_0)}t_{j_0}$ be the highest variable with
the property that there exist $e_0\in\NN$, $p\nmid e_0$ and a
monomial $\omega'\in\Dif_{F/K}$ such that $(\d^{(i_0)}t_{j_0})^{e_0}\omega'$
is a monomial term of $\omega$. Let $e_0$ and $\omega'$ be chosen such
that  $(\d^{(i_0)}t_{j_0})^{e_0}\omega'$ is maximal among those
monomials. Then for every $k\leq p^l(p-1)$, we have:
$$\LT(\d_{\Dif}^{(k)}(\omega))\leq e_0 \d^{(i_0+p^l(p-1))}t_{j_0}\cdot
(\d^{(i_0)}t_{j_0})^{e_0-1}\omega',$$
with equality if and only if $k=p^l(p-1)$ and $i_0<p^l$.
\end{lem}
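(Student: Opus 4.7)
The plan is to expand $\d_{\Dif}^{(k)}(\omega)$ using that $\d_{\Dif}$ is a $K$-algebra homomorphism (Thm.~\ref{d_Dif}), so the Leibniz rule $\d_{\Dif}^{(k)}(xy)=\sum_{u+v=k}\d_{\Dif}^{(u)}(x)\,\d_{\Dif}^{(v)}(y)$ applies, together with the formula $\d_{\Dif}^{(a)}(\d^{(i)}t_j)=\binom{i+a}{a}\d^{(i+a)}t_j$. The key auxiliary observation is Frobenius-type: since $\ch(K)=p$ and $\d_\Dif$ is multiplicative,
\[
\d_{\Dif}\bigl((\d^{(i)}t_j)^{pe}\bigr) = \Bigl(\textstyle\sum_{a\ge 0}\binom{i+a}{a}^p(\d^{(i+a)}t_j)^p\Bigr)^{\!e},
\]
so $\d_{\Dif}^{(v)}$ applied to any monomial with all variables in $p$-divisible powers vanishes unless $p\mid v$, produces only monomials with $p$-divisible exponents, and shifts any single variable by at most $v/p$.

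Decompose $\omega=\sum_m c_m M_m$ into monomials. For the main term $M_{\mathrm{main}}=(\d^{(i_0)}t_{j_0})^{e_0}\omega'$, Leibniz gives
\[
\d_{\Dif}^{(k)}(M_{\mathrm{main}})=\sum_{u+v=k}\d_{\Dif}^{(u)}\bigl((\d^{(i_0)}t_{j_0})^{e_0}\bigr)\cdot\d_{\Dif}^{(v)}(\omega'),
\]
and the multinomial expansion $\d_{\Dif}^{(u)}\bigl((\d^{(i_0)}t_{j_0})^{e_0}\bigr)=\sum_{\vec a:\,\sum a_r=u}\prod_r\binom{i_0+a_r}{a_r}\d^{(i_0+a_r)}t_{j_0}$ has lex-leading candidate $e_0\binom{i_0+u}{u}\,\d^{(i_0+u)}t_{j_0}\cdot(\d^{(i_0)}t_{j_0})^{e_0-1}$, obtained by concentrating the full shift on a single factor. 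I would then argue that $(u,v)=(k,0)$ gives the lex-maximum overall: for variables in $\omega'$ strictly lex-below $\d^{(i_0)}t_{j_0}$, a careful inequality shows shifts of them combined with the reduced main-factor shift $k-v$ cannot surpass $\d^{(i_0+k)}t_{j_0}$; for variables in $\omega'$ above $\d^{(i_0)}t_{j_0}$ (which by maximality of $(i_0,j_0)$ carry a $p$-divisible exponent, and by the degree constraint $pe'i'\le p^l$ satisfy $i'\le p^{l-1}$), the Frobenius-bounded shift $v/p$ yields maximum reachable index $i'+v/p\le p^{l-1}+p^{l-1}(p-1)=p^l\le i_0+p^l(p-1)$, with borderline equality cases killed by a Lucas-vanishing binomial $\binom{i'+v/p}{v/p}^p$.

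For any other monomial $M_m$ of $\omega$: by the maximality of $(i_0,j_0,e_0,\omega')$, either $M_m$'s top non-$p$-divisible variable is strictly lex-smaller than $\d^{(i_0)}t_{j_0}$ (and the same analysis gives a strictly smaller contribution), or $M_m$ has every variable in a $p$-divisible power, in which case the Frobenius observation forces every monomial in $\d_{\Dif}^{(k)}(M_m)$ to have $p$-divisible exponents; since the target monomial $\d^{(i_0+p^l(p-1))}t_{j_0}\cdot(\d^{(i_0)}t_{j_0})^{e_0-1}\omega'$ carries the non-$p$-divisible exponent $1$ on $\d^{(i_0+p^l(p-1))}t_{j_0}$, no such $M_m$ can contribute to it. Combining, $\LT(\d_{\Dif}^{(k)}(\omega))\le e_0\binom{i_0+k}{k}\,\d^{(i_0+k)}t_{j_0}\cdot(\d^{(i_0)}t_{j_0})^{e_0-1}\omega'$, and this is lex at most $e_0\,\d^{(i_0+p^l(p-1))}t_{j_0}\cdot(\d^{(i_0)}t_{j_0})^{e_0-1}\omega'$ since $k\le p^l(p-1)$.

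Equality forces $k=p^l(p-1)$ (to match the index) together with the coefficient condition $\binom{i_0+p^l(p-1)}{p^l(p-1)}\equiv 1\pmod p$. Since the base-$p$ expansion of $p^l(p-1)$ consists of a single digit $p-1$ at position $l$, Lucas' theorem shows the congruence holds iff no carry occurs at position $l$ when adding $i_0$ to $p^l(p-1)$, i.e.\ iff $i_0<p^l$. The only remaining case compatible with the degree constraint $e_0i_0\le p^l$ is $i_0=p^l$, $e_0=1$, $\omega'\in F^\times$, for which $\binom{p^{l+1}}{p^l(p-1)}=\binom{p^{l+1}}{p^l}\equiv 0\pmod p$ by Lucas, so the main contribution itself vanishes. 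The chief obstacle I foresee is rigorously handling the borderline interaction between higher $p$-divisible-exponent variables in $\omega'$ and the main shift, which requires simultaneously invoking the degree bound and Lucas-style vanishing of the accompanying Frobenius coefficients; the rest is systematic Leibniz bookkeeping.
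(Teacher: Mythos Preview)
Your approach is essentially the same as the paper's: both expand via the Leibniz rule and the formula $\d_{\Dif}^{(a)}(\d^{(i)}t_j)=\binom{i+a}{a}\d^{(i+a)}t_j$, both isolate the Frobenius behaviour on $p$-th powers to bound shifts of variables carrying $p$-divisible exponents by $k/p$, and both use the degree constraint $\deg\omega=p^l$ to force any such variable to have index $i'\le p^{l-1}$, hence reachable index at most $p^l$. The equality analysis via Lucas' theorem is exactly the paper's condition $\binom{i_0+p^l(p-1)}{i_0}\not\equiv 0\pmod p$ rephrased, with the residual case $i_0=p^l$ handled identically. Your write-up is slightly more monomial-oriented while the paper phrases things variable-by-variable, and you make the Lucas step explicit where the paper leaves it implicit, but there is no substantive difference.
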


\begin{proof}
For $i\in\NN$, $j\in\{1,\dots, m\}$, $e\in\NN_+$ and $k\in\NN$, we
have 
$$\d_\Dif^{(k)}\left((\d^{(i)}t_j)^e\right)= \sum_{k_1+\dots +k_e=k}
\binom{i+k_1}{i}\cdots \binom{i+k_e}{i} \d^{(i+k_1)}t_j\cdots
\d^{(i+k_e)}t_j.$$ 
So
\begin{eqnarray*}
\LT\left(\d_\Dif^{(k)}\left((\d^{(i)}t_j)^e\right)\right)&=&
e\cdot \binom{i+k}{i} \d^{(i+k)}t_j (\d^{(i)}t_j)^{e-1} \quad \text{if
} e\binom{i+k}{i}\ne 0\in\FF_p\\
\d_\Dif^{(k)}\left((\d^{(i)}t_j)^e\right)&=& 0 \qquad \text{if } p\mid
e\text{ and } p\nmid k \quad \text{ and }\\
\d_\Dif^{(k)}\left((\d^{(i)}t_j)^e\right)&=& \left(
\d_\Dif^{(\frac{k}{p})}\left((\d^{(i)}t_j)^{\frac{e}{p}}\right)\right)^p
\quad  \text{if }  p\mid e\text{ and } p\mid k.
\end{eqnarray*}
So for $k\leq p^l(p-1)$, a variable $\d^{(i)}t_j\ne \d^{(i_0)}t_{j_0}$
occurring in $\omega$ gives a contribution to $\d_{\Dif}^{(k)}(\omega)$
of variables
\begin{enumerate}
\item[(i)] less than $\d^{(i_0+k)}t_{j_0}$ if it occurs in a power not
divided by $p$ and
\item[(ii)] less than $\d^{(i+\frac{k}{p})}t_{j}$ otherwise.
\end{enumerate}
In the second case, $i\leq p^{l-1}$, since $\omega$ is of
degree ${p^l}$, and
so $i+\frac{k}{p}\leq p^{l-1}+p^{l-1}(p-1)=p^l$. So
$\d^{(i+\frac{k}{p})}t_{j}< \d^{(i_0+p^l)}t_{j_0}$.
Therefore the highest variable that may occur is
$\d^{(i_0+k)}t_{j_0}$ (or $\d^{(i_0+p^l)}t_{j_0}$ if $k<p^l$) and 
$\d^{(i_0+p^l(p-1))}t_{j_0}$ occurs if and only if $k=p^l(p-1)$ and
$\binom{i_0+p^l(p-1)}{i_0}\ne 0\in\FF_p$, \ie{} $i_0\ne p^l$.\\
The highest corresponding monomial then is

\hfill $e_0 \d^{(i_0+p^l(p-1))}t_{j_0}\cdot
(\d^{(i_0)}t_{j_0})^{e_0-1}\omega'.$
\end{proof}

\begin{prop}\label{icon_int_to_proj}
Every $R$-module $M$ with integrable iterative connection $\nabla$
defines an Fc-projective system $(M_l,\varphi_l)$ over $R$, where
$M_l:=\bigcap\limits_{0<j<p^l} \Ker(\nabla^{(j)})$ and
$\varphi_l:M_{l+1}\to M_l$ is the inclusion map, and a morphism
$f:(M,\nabla)\to (M',\nabla')$ of modules with higher connection
defines a morphism $\alpha:(M_l,\varphi_l)\to
(M'_l,\varphi'_l)$ of Fc-projective systems over $R$ by
$\alpha_l:=f\restr{M_l}$. 
\end{prop}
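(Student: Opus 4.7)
The plan is to verify the defining conditions of an Fc-projective system in turn, with the substantive work concentrated in showing that $\id_{R_l}\otimes\varphi_l$ is an isomorphism.

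First, I would check that $M_l:=\bigcap_{0<j<p^l}\Ker(\nabla^{(j)})$ is an $R_l$-submodule of $M$. For $r\in R_l=R^{p^l}$ and $m\in M_l$, the Leibniz rule for the higher connection gives
$$\nabla^{(j)}(rm)=\sum_{i+k=j}\d_R^{(i)}(r)\cdot\nabla^{(k)}(m).$$
For $0<j<p^l$ every summand vanishes: either $0<i<p^l$ and then $\d_R^{(i)}(r)=0$ by Proposition \ref{R_l}, or $i=0$ and $0<k<p^l$ and then $\nabla^{(k)}(m)=0$ by the definition of $M_l$. Hence $rm\in M_l$. The inclusion $\varphi_l:M_{l+1}\hookrightarrow M_l$ is manifestly $R_{l+1}$-linear and injective, and for a morphism $f:(M,\nabla)\to(M',\nabla')$ the intertwining $(\id_\Dif\otimes f)\circ\nabla^{(j)}=(\nabla')^{(j)}\circ f$ forces $f(M_l)\subseteq M_l'$, so that the restrictions $\alpha_l:=f|_{M_l}$ commute with $\varphi_l$ and $\varphi_l'$. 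Finite generation of $M_l$ over $R_l$ will follow once the main isomorphism is in hand.

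The core of the argument is the isomorphism $\id_{R_l}\otimes\varphi_l:R_l\otimes_{R_{l+1}}M_{l+1}\to M_l$. I would reduce this to the cleaner claim that the multiplication map
$$\mu_l:R\otimes_{R_l}M_l\longrightarrow M,\qquad r\otimes m\longmapsto rm$$
is an isomorphism for each $l\geq 0$. Granting $\mu_l$ and $\mu_{l+1}$, the original assertion follows by tensoring down to $R_l$ and using that $R$ is faithfully flat over $R_l=R^{p^l}$ (since $R$ is a regular finitely generated localisation of a $K$-algebra over a perfect field $K$, the Frobenius $R_l\hookrightarrow R$ is flat). To prove $\mu_l$ is an isomorphism I would localise at a maximal ideal $\m\ideal R$. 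By Corollary \ref{automatically_projective}, $M_\m$ is free over $R_\m$, and by Theorem \ref{dif_formula}(c), $\Dif_{R_\m/K}$ is the completed polynomial algebra in $\d^{(i)}t_j$ for a separating system $t_1,\dots,t_m$; the monomials $t^{\vect{e}}$ with $0\leq e_j<p^l$ form an $(R_\m)_l$-basis of $R_\m$. Surjectivity of $\mu_l$ at $\m$ would be produced by an integrability-driven Taylor expansion: for a basis element $b$ of $M_\m$ I would write $b=\sum_{\vect{e}}t^{\vect{e}}\,m_{\vect{e}}(b)$ with $m_{\vect{e}}(b)\in(M_l)_\m$ given by an explicit formula built from the components of $\nabla_{\phi_{t_j}}$ (cf. Example \ref{phi_t_j}), and integrability together with Proposition \ref{R_l} would ensure each $m_{\vect{e}}(b)$ lies in the intersection defining $M_l$. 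Injectivity is where the leading-term lemma preceding this proposition is decisive: assuming $\sum_{\vect{e}}t^{\vect{e}}m_{\vect{e}}=0$ with $m_{\vect{e}}\in(M_l)_\m$ and some $m_{\vect{e}_0}\neq 0$, applying $\nabla^{(j)}$ for a $j<p^{l+1}$ chosen via the lemma produces, in $\Dif_{R_\m/K}\otimes M_\m$, a leading term which is a nonzero scalar multiple of $\omega_0\otimes m_{\vect{e}_0}$ plus strictly lower-order terms, forcing $m_{\vect{e}_0}=0$ by induction on the lex ordering of $\vect{e}$.

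The principal obstacle is injectivity: controlling the interference between the terms $\nabla^{(j)}(t^{\vect{e}}m_{\vect{e}})$ requires a carefully chosen $j$, and this is precisely the combinatorial content of the leading-term lemma. A secondary difficulty is that the rank of $(M_l)_\m$ over $(R_\m)_l$ is not given a priori, so one cannot argue by counting ranks against those of $R\otimes_{R_l}M_l$; the explicit Taylor expansion must therefore establish surjectivity of $\mu_l$ and local freeness of $(M_l)_\m$ simultaneously, with the leading-term lemma then certifying that the resulting candidate family is genuinely $(R_\m)_l$-linearly independent.
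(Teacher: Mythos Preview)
Your strategy diverges from the paper's, and the surjectivity step is where the real content hides.

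The paper does not attack $\mu_l:R\otimes_{R_l}M_l\to M$ directly for each $l$. It argues by induction: for $l=1$ it observes that $\nabla^{(1)}$ is an integrable connection of $p$-curvature zero (iterativity gives $(\nabla^{(1)})^p=p!\,\nabla^{(p)}=0$), and then invokes Cartier's theorem (\cite{katz2}, Thm.~5.1) to obtain $R\otimes_{R_1}M_1\xrightarrow{\sim}M$ outright. The inductive step is to show $\nabla(M_1)\subseteq(\Dif_{R/K})^p\otimes_{R_1}M_1$; since $(\Dif_{R/K})^p\cong\Dif_{R_1/K}$, this makes $\nabla|_{M_1}$ an integrable iterative connection on the $R_1$-module $M_1$, and Cartier applies again one level down. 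The leading-term lemma is used only for this descent: working over $F=\Quot(R)$ with an $F_1$-basis $\vect{b}$ of $M_1$ and $\nabla^{(p^l)}(\vect{b})=\vect{b}A_l$, the iterativity identities $\difnabla^{(1)}\circ\nabla^{(p^l)}=0$ and $\difnabla^{(p^l(p-1))}\circ\nabla^{(p^l)}=0$ together with the lemma force $A_l\in\Mat_n(\Dif^p)$.

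Your proposed use of the leading-term lemma for injectivity of $\mu_l$ is a mismatch: that lemma is about homogeneous elements of $\Dif_F$ of degree $p^l$ under $\d_\Dif^{(k)}$, not about elements of $R$. Injectivity of $\mu_l$ is in fact the easy direction and follows from the Leibniz rule alone, in the spirit of Lemma~\ref{invertible_derivative}: for $m_{\vect{e}}\in M_l$ and $0<k<p^l$ one has $\nabla_{\phi_{t_j}}^{(k)}(t^{\vect{e}}m_{\vect{e}})=\phi_{t_j}^{(k)}(t^{\vect{e}})\cdot m_{\vect{e}}$, and suitable composites isolate each $m_{\vect{e}}$. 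The genuine gap is surjectivity: your ``integrability-driven Taylor expansion'' amounts to reproving Cartier's theorem (and more, for $l>1$) from scratch. You have not specified the formula for $m_{\vect{e}}(b)$, nor explained why it lands in $M_l=\bigcap_{0<j<p^l}\Ker(\nabla^{(j)})$, nor why the expansion recovers $b$. The paper avoids all of this by importing Cartier as a black box and then descending level by level via the lemma.
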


\begin{proof}
Since $\nabla$ is an integrable iterative connection on $M$,
$\nabla^{(1)}$ is an integrable connection on $M$ (cf. proof of
Prop. \ref{der_itder},iii) ), and is of $p$-curvature zero.
Now let $M_1:=\Ker\left(\nabla^{(1)}\right)$ ( $=\bigcap_{0<j<p^1}
\Ker(\nabla^{(j)})$, since $\nabla$ is iterative). Then by Cartier's
Theorem on the $p$-curvature (cf. \cite{katz2}, Thm. 5.1), $M_1$ is an
$R_1$-module and 
$R \otimes_{R_1} M_1\to M$ is an isomorphism of $R$-modules.

Next, we will show that $\nabla(M_1)\subset (\Dif_{R/K})^p\otimes_{R_1}
M_1$. Since
$(\Dif_{R/K})^p$ is 
isomorphic to $\Dif_{R_1/K}$ as an algebra by
the map $\left(\d^{(i)}x\right)^p\mapsto \d^{(i)}(x^p)$, this means that
essentially $\nabla\restr{M_1}$ is an integrable iterative connection
on the $R_1$-module $M_1$. It then follows inductively that
$R_l\otimes_{R_{l+1}} M_{l+1}\xrightarrow{\isom} M_l$ and that, essentially,
$\nabla\restr{M_{l+1}}$ is an integrable iterative connection on the
$R_{l+1}$-module $M_{l+1}$. 

Since $M_1$ and $\Dif_{R/K}$ are locally free, and hence localisation
is injective, it suffices to show the statement for the quotient field
$F:=\Quot(R)$ of $R$. For simplicity, we again write $M$ and 
$M_1$ for what should be $F\otimes_R M$ and $F_1\otimes_{R_1} M_1$:

Since $\nabla$ is iterative, we only have to show that
$\nabla^{(p^l)}(M_1)\subset (\Dif_{F/K})^p\otimes_{F_1} M_1$ for all $l\geq
1$.
So fix an $F_1$-basis $\vect{b}=(b_1,\dots,b_n)$ of $M_1$ (written as
a row) and let 
$A_l\in \Mat_n(\Dif_{p^l})$ with $\nabla^{(p^l)}(\vect{b})=
\vect{b}A_l$.\footnote{For simplicity we use vector notations:
$\vect{b}A_l$ denotes the row vector with $j$-th component
$\sum_{i=1}^n (A_l)_{ij} b_i$, and
$\nabla$ and $\d_{\Dif}$ are always applied to the components of a
vector or a matrix. Also we abbreviate $\Dif_{F/K}$ by $\Dif$.} From
$0=\difnabla^{(p^l)}(\nabla^{(1)}(\vect{b}))=
\difnabla^{(1)}(\nabla^{(p^l)}(\vect{b}))=\vect{b}
\d_\Dif^{(1)}(A_l)$ we conclude $\d_\Dif^{(1)}(A_l)=0$.
Assume there is an entry $\omega\in\Dif_{p^l}$
$\subset
F[\d^{(i)}t_j\mid i=1,\dots,p^l, j=1,\dots, m]$ of $A_l$ with
$LT(\omega)=r\d^{(p^l)}t_j$ (for some 
$r\in F$ and $j\in\{1,\dots,
m\}$).
Since $\d_\Dif^{(1)}(r\d^{(p^l)}t_j)=\d^{(1)}(r)\d^{(p^l)}t_j+
r\d^{(p^l+1)}t_j$, and since for all other monomials of $\omega$, the image
under $\d_\Dif^{(1)}$ doesn't contain the variable $\d^{(p^l+1)}t_j$,
we obtain $\d_\Dif^{(1)}(\omega)\ne 0$, a contradiction.
So $\omega\in F[\d^{(i)}t_j\mid i=1,\dots,p^l-1, j=1,\dots, m]$.

Furthermore, since $\nabla$ is iterative,
$\difnabla^{(p^l(p-1))}\circ
\nabla^{(p^l)}=\binom{p^{l+1}}{p^l}\nabla^{(p^{l+1})}=0$, and therefore
$$0=\difnabla^{(p^l(p-1))}(\vect{b}A_l)=\vect{b}\cdot
\d_\Dif^{(p^l(p-1))}(A_l) 
+ \sum_{k=0}^{p^l(p-1)-1}
\nabla^{(p^l(p-1)-k)}(\vect{b})\cdot \d_\Dif^{(k)}(A_l).$$
If $A_l\not\in\Mat_n(F \Dif^p)$, then by the previous lemma,
$\d^{(p^l(p-1))}(A_l)$ has an entry with leading term $e_0
\d^{(i_0+p^l(p-1))}t_{j_0}
\left(\d^{(i_0)}t_{j_0}\right)^{e_0-1}\cdot \omega'$ for some
$\omega'\in \Dif$, $i_0\leq p^l$ and $j_0\in\{1,\dots, m\}$, and the
variables occurring in $\d_\Dif^{(k)}(A_l)$ ($k<p^l(p-1)-1$) are less
than $\d^{(i_0+p^l(p-1))}t_{j_0}$. Moreover, those occurring in
$\nabla^{(p^l(p-1)-k)}(\vect{b})$ are even less than or equal to
$\d^{(p^l(p-1))}t_m$. So we would have
$\difnabla^{(p^l(p-1))}(\vect{b}A_l)\ne 0$.
Therefore $A_l\in \Mat_n(F\Dif^p)$. 

At last, since $\d_\Dif^{(1)}(A_l)=0$, in fact $A_l\in
\Mat_n(\Dif^p)$, which completes the proof.
\end{proof}

\begin{thm}\label{equiv_cat}
The category ${\bf Proj}_R$ of Fc-projective systems over $R$ and the
category $\IConint(R/K)$ are equivalent. Furthermore,
if $R$ is an algebraic function field in one variable over $K$ and
$\phi\in\ID_K(R)$ with $\phi^{(1)}\ne 0$, then they are also
equivalent to the category 
${\bf ID}_R$ of iterative differential modules over $(R,\phi)$
(cf. \cite{mat_hart}, Ch. 2 and \cite{mat_put}, Ch. 2) and to
the category $\ICon(R/K)$.
\end{thm}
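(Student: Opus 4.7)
The plan is to observe that the two propositions preceding the theorem already provide functors in both directions, and to verify that these are mutually quasi-inverse. Write $\Phi\colon {\bf Proj}_R \to \IConint(R/K)$ for the functor sending $(M_l,\varphi_l)$ to $M_0$ with the integrable iterative connection $\nabla$ constructed via the isomorphisms $\chi_l:R\otimes_{R_l}M_l\xrightarrow{\isom}M$ and $\d_R^{(k)}\otimes\id_{M_l}$; and $\Psi\colon \IConint(R/K)\to{\bf Proj}_R$ for the functor sending $(M,\nabla)$ to the system $M_l:=\bigcap_{0<j<p^l}\Ker(\nabla^{(j)})$ with inclusion maps, as in Proposition~\ref{icon_int_to_proj}. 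That both assignments are functorial on morphisms was verified in each of the two propositions, so only the natural isomorphisms $\Psi\Phi\isom\id$ and $\Phi\Psi\isom\id$ remain.

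For $\Psi\Phi\isom\id$, start with an Fc-projective system $(M_l,\varphi_l)$. The proposition before the theorem states exactly that the connection $\nabla$ produced by $\Phi$ satisfies $\bigcap_{0<j<p^l}\Ker(\nabla^{(j)})=(\varphi_0\circ\cdots\circ\varphi_{l-1})(M_l)$; since each $\varphi_l$ is a monomorphism, this identifies $\Psi(\Phi((M_l,\varphi_l)))$ with $(M_l,\varphi_l)$ up to a canonical isomorphism of Fc-projective systems. For $\Phi\Psi\isom\id$, start with $(M,\nabla)$ and set $M_l:=\bigcap_{0<j<p^l}\Ker(\nabla^{(j)})$; by Proposition~\ref{icon_int_to_proj} the natural map $R\otimes_{R_l}M_l\to M$ is an isomorphism, so $\chi_l$ is well defined. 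Let $\nabla'$ be the connection produced by $\Phi$. To show $\nabla=\nabla'$, fix $k\in\NN$ and choose $l$ with $p^l>k$. For $r\in R$ and $m\in M_l$, the $\psi$-derivation property gives $\nabla^{(k)}(rm)=\sum_{i+j=k}\d_R^{(i)}(r)\,\nabla^{(j)}(m)=\d_R^{(k)}(r)\otimes m$, since $\nabla^{(j)}(m)=0$ for $0<j<p^l$ and $\nabla^{(0)}=\id_M$; this is exactly the definition of $\nabla'^{(k)}$ applied to $rm$. Since elements of the form $rm$ with $m\in M_l$ generate $M$ over $R$ and $k$ was arbitrary, $\nabla=\nabla'$.

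For the second assertion, assume $R$ is an algebraic function field in one variable with $\phi\in\ID_K(R)$, $\phi^{(1)}\ne 0$. By F.~K.~Schmidt's theorem quoted after Proposition~\ref{R_l}, $R_l=\bigcap_{0<j<p^l}\Ker(\phi^{(j)})$, so the construction of Proposition~\ref{icon_int_to_proj} applied with $\phi$ in place of $\d_R$ yields an equivalence ${\bf ID}_R\simeq{\bf Proj}_R$ by the same kernel-filtration argument (this is essentially \cite{mat_hart}, Ch.~2.2). To identify $\ICon(R/K)$ with $\IConint(R/K)$ in this setting, observe that with only one separable transcendence generator, $R$ has a single iterative derivation $\phi$ up to the action of $K$, and any higher derivation $\psi\in\HD_K(R)$ is determined by $\psi(t)\in B$; the iterativity of $\nabla$ together with Lemma~\ref{tensor_product_and_dual} then forces $\nabla_{\psi_1\psi_2}=\nabla_{\psi_1}\nabla_{\psi_2}$ for arbitrary $\psi_1,\psi_2$, giving the integrability automatically.

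The main obstacle will be the verification that $\Phi\Psi\isom\id$ really holds at the level of connections rather than just modules: one must check that the degree-$k$ part of $\nabla$ agrees, on every element of $M$, with the expression $(\id_\Dif\otimes\chi_l)\circ(\d_R^{(k)}\otimes\id_{M_l})\circ\chi_l^{-1}$ for some (hence every) sufficiently large $l$. The derivation-property computation sketched above reduces this to the vanishing $\nabla^{(j)}|_{M_l}=0$ for $0<j<p^l$, which is built into the definition of $M_l$. The second-most delicate point is justifying automatic integrability in the one-variable case, which rests on the fact that every $\psi\in\HD_K(R)$ can be rewritten through the universal higher derivation and the iterative structure, leaving no room for a nontrivial commutator obstruction.
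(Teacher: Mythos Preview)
Your treatment of the equivalence $\IConint(R/K)\simeq{\bf Proj}_R$ is correct and matches the paper's approach, with the added virtue that you spell out why $\Phi\Psi\isom\id$ and $\Psi\Phi\isom\id$ rather than merely asserting it. The citation for ${\bf ID}_R\simeq{\bf Proj}_R$ is also in line with the paper.

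The gap is in your argument for $\ICon(R/K)=\IConint(R/K)$ in the one-variable case. Two problems: first, the claim that ``$R$ has a single iterative derivation $\phi$ up to the action of $K$'' is false---there are many iterative derivations not of the form $a.\phi_t$ (any automorphism of $R[[T]]$ over $R$ conjugates $\phi_t$ to another one, for instance). Second, Lemma~\ref{tensor_product_and_dual} concerns how $(\nabla_\otimes)_\psi$ and $(\nabla_H)_\psi$ decompose on tensor products and internal Homs; it says nothing about the identity $\nabla_{\psi_1\psi_2}=\nabla_{\psi_1}\nabla_{\psi_2}$ you need. So the step ``iterativity together with Lemma~\ref{tensor_product_and_dual} forces integrability'' is unjustified.

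The paper takes a different and cleaner route: rather than proving directly that iterativity implies integrability in one variable, it observes that the proof of Proposition~\ref{icon_int_to_proj} never actually uses integrability when $m=1$. Indeed, the only place integrability enters is in asserting that $\nabla^{(1)}$ is an integrable connection so that Cartier's theorem applies; but with a single derivation the integrability of $\nabla^{(1)}$ is vacuous, and $p$-curvature zero follows from iterativity alone via $(\nabla^{(1)})^p=p!\,\nabla^{(p)}=0$. The remainder of that proof (the leading-term argument showing $A_l\in\Mat_n(\Dif^p)$) uses only the iteration rule. Hence Proposition~\ref{icon_int_to_proj} already gives a functor $\ICon(R/K)\to{\bf Proj}_R$ in the one-variable case, and composing with the inverse functor (which lands in $\IConint$) yields the identification.
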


\begin{proof}
The first statement follows immediately from the previous two
propositions, since the given maps are functors that are inverses to
each other.
The proof of Proposition \ref{icon_int_to_proj} shows that the
integrability condition is not necessary when $R$ is an algebraic
function field in one variable. So $\ICon(R/K)$ is equivalent to ${\bf
  Proj}_R$ in this 
case. Furthermore, Matzat and van der Put showed in \cite{mat_hart},
Thm. 2.8, resp. \cite{mat_put}, Prop. 5.1,
that ${\bf ID}_R$ is also equivalent to  ${\bf Proj}_R$.
\end{proof}

\begin{cor}\label{conn_strat_pos}
If $K$ is algebraically closed and $R$ is smooth over $K$, then 
the category $\IConint(R/K)$ is equivalent to the category of
stratified modules over $R$.
\end{cor}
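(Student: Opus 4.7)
The plan is to split into the two characteristic cases, since the bridge to stratifications goes through entirely different classical results in each. If $\ch(K)=0$, the statement is precisely Corollary \ref{conn_strat_zero}, which has already been established (via the equivalence of iterative connections with ordinary connections from Prop. \ref{der_itder} combined with Berthelot--Ogus, Thm. 2.15). So the remaining work is in positive characteristic, and the idea is to compose two equivalences already available to us.

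In characteristic $p>0$, I would argue as follows. First, Theorem \ref{equiv_cat} gives an equivalence between $\IConint(R/K)$ and the category ${\bf Proj}_R$ of Fc-projective systems over $R$. Second, Remark \ref{Fdivided} identifies ${\bf Proj}_R$ with the category of flat bundles (equivalently, F-divided sheaves) on $\Spec(R)$, by reinterpreting the isomorphism $\id_{R_l}\otimes\varphi_l: R_l\otimes_{R_{l+1}} M_{l+1}\to M_l$ as a Frobenius-pullback isomorphism ${\bf F}^*(M_{l+1})\to M_l$. Third, the theorem of Katz cited in \cite{gieseker}, Thm. 1.3, asserts that over a smooth $K$-scheme (with $K$ algebraically closed of positive characteristic) the category of flat bundles is equivalent to the category of stratified modules. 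Composing these three equivalences produces the desired equivalence between $\IConint(R/K)$ and stratified modules over $R$.

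The steps are therefore: (i) reduce to $\ch(K)=p>0$; (ii) invoke Theorem \ref{equiv_cat} to pass from $\IConint(R/K)$ to ${\bf Proj}_R$; (iii) use Remark \ref{Fdivided} to identify ${\bf Proj}_R$ with flat bundles on $\Spec(R)$, noting that this identification needs $K$ perfect so that the Frobenius ${\bf F}_l:R\to R_l$ is a bijection of rings and the $R$-module structure on each $M_l$ transported from its $R_l$-structure is unambiguous; (iv) apply Katz's theorem to identify flat bundles with stratified modules, which requires smoothness of $R$ and $K$ algebraically closed as stated in \cite{gieseker}.

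The main obstacle I anticipate is nothing deep in our setup, since Theorem \ref{equiv_cat} has been proved here and Katz's result is borrowed as a black box; rather, the delicate part is checking that the composite of the three equivalences is genuinely a functorial equivalence of categories and not merely a bijection on isomorphism classes. Concretely, one has to verify that morphisms of Fc-projective systems (componentwise $R_l$-linear maps commuting with the $\varphi_l$) correspond, under the identification of Remark \ref{Fdivided}, to morphisms of flat bundles (i.e.\ $R$-linear maps intertwining the Frobenius-pullback isomorphisms $\sigma_l$). This is essentially bookkeeping given that ${\bf F}_l$ is a ring isomorphism onto $R_l$ and the perfectness of $K$, so I expect no real trouble, but it is where the proof requires the most care.
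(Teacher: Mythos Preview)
Your proposal is correct and follows essentially the same route as the paper: apply Theorem \ref{equiv_cat} to pass from $\IConint(R/K)$ to ${\bf Proj}_R$, then invoke Katz's theorem (\cite{gieseker}, Thm.~1.3) to identify this with stratified modules. The only redundancy is your characteristic-zero case split: Corollary \ref{conn_strat_pos} sits in Section \ref{proj_system}, which carries the standing hypothesis $\ch(K)=p>0$, so that reduction is unnecessary (the paper treats $\ch(K)=0$ separately in Corollary \ref{conn_strat_zero} and remarks on the parallel afterwards).
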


\begin{proof}
By the previous theorem, the category $\IConint(R/K)$ is equivalent to
the category ${\bf Proj}_R$ of Fc-projective systems over $R$. Furthermore
under the given assumptions, ${\bf Proj}_R$ is equivalent to the
category of stratified modules over $R$, by
\cite{gieseker}, Thm. 1.3. So the statement follows.
\end{proof}

In the previous section, we have seen that the same corollary holds
for $\ch(K)=0$ (cf. Cor. \ref{conn_strat_zero}).
However, there is still no proof of this equivalence that works in
arbitrary characteristic.
Furthermore, it is an open question whether stratifications and
integrable iterative connections are equivalent or even related, when
$R$ is not smooth over $K$.


\section{Higher Connections on Schemes}\label{diff_schemes}

Next, we outline a generalisation of modules with iterative
connections to modules over schemes.\\
Throughout this section, let $K$ be a perfect field, let
$X$ be a nonsingular, geometrically integral
$K$-scheme which is separated and of finite type over $K$, and let
${\mathcal O}_X$ denote the structure sheaf of $X$.

\begin{defn}
We define the {\markdef sheaf of higher differentials on $X$}, denoted
by  $\Dif_{X/K}$, to be
the sheaf associated to the presheaf given by
$$U\mapsto \Dif_{{\mathcal O}_X(U)/K}$$
for each  open subset $U\subseteq X$ and by the restriction maps
$$D(\rho_V^U):\Dif_{{\mathcal O}_X(U)/K}\to \Dif_{{\mathcal O}_X(V)/K}$$
for all open subsets $V\subseteq U\subseteq X$, as defined in
Proposition \ref{isom_of_diff}, where $\rho_V^U:{\mathcal O}_X(U)\to {\mathcal O}_X(V)$ is
the restriction map of ${\mathcal O}_X$.
\end{defn}

\begin{rem}
By Proposition \ref{isom_of_diff}, for all open subsets  $V\subseteq
U\subseteq X$, the diagram\\
\centerline{\xymatrix@+10pt{
{\mathcal O}_X(U) \ar[r]^{\d_{{\mathcal O}_X(U)}} \ar[d]_{\rho_V^U} & \Dif_{{\mathcal O}_X(U)/K}
\ar[d]_{D(\rho_V^U)} \\
{\mathcal O}_X(V) \ar[r]^{\d_{{\mathcal O}_X(V)}} & \Dif_{{\mathcal O}_X(V)/K}
}}
\noindent commutes and so the collection of maps $\d_{{\mathcal O}_X(U)}$ induces a
morphism of sheaves of $K$-algebras $\d_X:{\mathcal O}_X \to \Dif_{X/K}$.
\end{rem}

\begin{prop}
If $X$ is an affine scheme, then the presheaf $U\mapsto
\Dif_{{\mathcal O}_X(U)/K}$ is in fact a sheaf.
\end{prop}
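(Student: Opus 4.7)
The plan is to verify the sheaf axiom first on the basis of principal opens of $X = \Spec(R)$, and then upgrade to arbitrary opens; the decomposition of the algebra of higher differentials into homogeneous components is the main organizing tool throughout.

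For the basis case, fix a principal open $D(g)$ covered by principal opens $\{D(f_i)\}_{i \in I}$; by quasi-compactness of $D(g)$ we may take $I$ finite. The sheaf condition amounts to the exactness of
\[
0 \longrightarrow \Dif_{R_g/K} \longrightarrow \prod_i \Dif_{R_{g f_i}/K} \rightrightarrows \prod_{i,j} \Dif_{R_{g f_i f_j}/K}.
\]
Since localizations are formally \'etale (Example \ref{formally_etale_ex}(i)), Proposition \ref{isom_of_diff}(b) identifies $\Dif_{R_{g f_i}/K} \isom R_{g f_i}\otimes_{R_g}\Dif_{R_g/K}$ as \cgas, and similarly for the double intersections. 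Because the tensor product of \cgas{} respects the degree decomposition, and because (infinite) products of abelian groups are exact, it is enough to establish, for each $k\in\NN$, the exactness of
\[
0 \longrightarrow (\Dif_{R_g/K})_k \longrightarrow \prod_i R_{g f_i}\otimes_{R_g} (\Dif_{R_g/K})_k \rightrightarrows \prod_{i,j} R_{g f_i f_j}\otimes_{R_g} (\Dif_{R_g/K})_k.
\]
By Corollary \ref{dif_projective}, $(\Dif_{R_g/K})_k$ is a finitely generated projective $R_g$-module, so the associated quasi-coherent sheaf on $D(g)$ satisfies precisely this descent property, and reassembling over $k$ gives the result.

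To upgrade from the basis to an arbitrary open $U\subseteq X$, one shows that the presheaf value $\Dif_{\mathcal{O}_X(U)/K}$ coincides with the limit $\lim_{D(f)\subseteq U}\Dif_{R_f/K}$ prescribed by the basis sheaf condition already established. Since $\mathcal{O}_X$ is itself a sheaf, $\mathcal{O}_X(U)$ is the analogous limit of rings; combined with the universal property of $\Dif$ (Theorem \ref{diff_exists}) and the degree-wise argument above, the natural comparison map becomes an isomorphism by working one homogeneous component at a time, identifying each $(\Dif_{\mathcal{O}_X(U)/K})_k$ with the sections over $U$ of the quasi-coherent sheaf attached to the projective $R$-module $(\Dif_{R/K})_k$.

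The step I expect to be the most delicate is this last extension from the basis to non-affine opens: the universal property of $\Dif$ does not manifestly commute with the inverse limits defining $\mathcal{O}_X(U)$, so one must lean on the projectivity from Corollary \ref{dif_projective} to force degree-wise compatibility, while the basis-case exactness handles only the comparison on principal opens.
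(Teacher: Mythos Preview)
Your proof uses the same core ingredients as the paper's---the homogeneous decomposition of $\Dif$, the base-change isomorphism of Proposition~\ref{isom_of_diff}, and the projectivity of each $(\Dif_{R/K})_k$ from Corollary~\ref{dif_projective}---but organizes them differently. The paper does not split into a basis case followed by an upgrade: it treats an arbitrary open $U$ and an arbitrary covering $\{U_i\}$ in one pass, asserting that for every open $V\subseteq U$ the ring $\mathcal{O}_X(V)$ is a localisation of $\mathcal{O}_X(U)$, hence $\Dif_{\mathcal{O}_X(V)/K}\cong \mathcal{O}_X(V)\otimes_{\mathcal{O}_X(U)}\Dif_{\mathcal{O}_X(U)/K}$ by Proposition~\ref{isom_of_diff}(b). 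Then the degree-$k$ sheaf sequence is obtained from the sheaf sequence for $\mathcal{O}_X$ by tensoring with the projective module $(\Dif_{\mathcal{O}_X(U)/K})_k$, which preserves exactness. In effect, the paper's direct localisation claim is exactly what absorbs the ``delicate'' upgrade step you isolate: once one accepts $\Dif_{\mathcal{O}_X(U)/K}\cong \mathcal{O}_X(U)\otimes_R \Dif_{R/K}$ for every open $U$, your identification of $(\Dif_{\mathcal{O}_X(U)/K})_k$ with sections of the quasi-coherent sheaf $\widetilde{(\Dif_{R/K})_k}$ follows immediately (finitely generated projective modules satisfy $\Gamma(U,\widetilde{M})=\mathcal{O}_X(U)\otimes_R M$), and no separate limit argument is needed. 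Your two-step structure is more cautious about exactly when the localisation/formally-\'etale hypothesis is available, while the paper's one-step argument is shorter but leans on that hypothesis for all opens at once.
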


\begin{proof}
The given presheaf is a sheaf if and only if for all open subsets
$U\subseteq X$ and all open coverings $\bigcup\limits_{i\in I} U_i=U$, the
sequence
$$0 \to \Dif_{{\mathcal O}_X(U)/K}\to \prod_{i\in I} \Dif_{{\mathcal O}_X(U_i)/K}
\to \prod_{i,j\in I} \Dif_{{\mathcal O}_X(U_i\cap U_j)/K}$$
is exact. Since this is a sequence of \cgas, it suffices to show that
the sequence is exact in each homogeneous component.\\
For every open subset $V\subseteq U$, ${\mathcal O}_X(V)$
is a localisation of ${\mathcal O}_X(U)$ and so by Proposition
\ref{isom_of_diff},
$\Dif_{{\mathcal O}_X(V)/K}\isom {\mathcal O}_X(V)\otimes \Dif_{{\mathcal O}_X(U)/K}$.
By Corollary \ref{dif_projective}, the homogeneous components
$(\Dif_{{\mathcal O}_X(U)/K})_k$ 
($k\in\NN$) are projective ${\mathcal O}_X(U)$-modules  and therefore tensoring with
$(\Dif_{{\mathcal O}_X(U)/K})_k$ is exact. 
So the sequence above is exact in each homogeneous component, if the
sequence 
$$0 \to {\mathcal O}_X(U)\to \prod_{i\in I}{\mathcal O}_X(U_i) \to \prod_{i,j\in I}{\mathcal O}_X(U_i\cap
U_j)$$
is exact. But this is true since ${\mathcal O}_X$ is itself a sheaf.
\end{proof}

As an immediate consequence of this proposition, we have the
following corollary: 

\begin{cor}
For every affine open subset $U\subseteq X$, we have
$\Dif_{X/K}(U)=\Dif_{{\mathcal O}_X(U)/K}$. 
\end{cor}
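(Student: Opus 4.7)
The plan is to deduce the corollary from the preceding proposition via the standard fact that sheafification is a local construction. Given an affine open $U\subseteq X$, the key observation is that the open subscheme $U$ is itself a nonsingular, geometrically integral, affine $K$-scheme of finite type, so the hypotheses of the preceding proposition are satisfied by $U$ in place of $X$.

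First, I would introduce the presheaf $P$ on $X$ defined by $P(V):=\Dif_{\mathcal{O}_X(V)/K}$, so that $\Dif_{X/K}$ is by definition the sheafification $P^{\#}$. Restricting to opens $V\subseteq U$, one checks that $P|_U(V)=\Dif_{\mathcal{O}_X(V)/K}=\Dif_{\mathcal{O}_U(V)/K}$, i.e.\ $P|_U$ is exactly the presheaf on $U$ constructed from $\mathcal{O}_U$ in the definition of $\Dif_{U/K}$. Since $U$ is affine, the preceding proposition applies and tells us that $P|_U$ is already a sheaf on $U$.

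Next, I would use that sheafification commutes with restriction to open subsets: the natural morphism $P\to P^{\#}=\Dif_{X/K}$ restricts to a morphism $P|_U\to (\Dif_{X/K})|_U$ which, by the universal property, is a sheafification of $P|_U$ on $U$. Because $P|_U$ is already a sheaf, this map is an isomorphism. Evaluating at $U$ yields
\[
\Dif_{X/K}(U)\;=\;(\Dif_{X/K})|_U(U)\;\cong\;P|_U(U)\;=\;P(U)\;=\;\Dif_{\mathcal{O}_X(U)/K},
\]
which is the desired identification.

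There is no real obstacle here beyond bookkeeping: the content of the argument is entirely in the preceding proposition (which used Corollary \ref{dif_projective} to verify the sheaf axioms), and the corollary is merely the observation that, once one knows $P$ is a sheaf on every affine open, the sheafification of $P$ takes the expected values on such opens. The only point to watch is that ``sheafification commutes with open restriction'' is applied, which is a standard, purely formal property of the $(-)^{\#}$ functor and requires no input specific to $\Dif$.
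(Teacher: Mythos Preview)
Your argument is correct and is precisely the standard unpacking of what the paper calls an ``immediate consequence'': the paper gives no proof beyond that phrase, and your steps (restrict the presheaf to the affine open $U$, apply the preceding proposition to see it is already a sheaf there, and use that sheafification commutes with restriction to opens) are exactly the intended content.
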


\begin{defn}
Let $M$ be a coherent ${\mathcal O}_X$-module.
A {\markdef higher connection on $M$} is a morphism of
sheaves $\nabla:M\to \Dif_{X/K}\otimes_{{\mathcal O}_X} M$ which locally (\ie{}
on affine open subsets) is a higher connection in the sense of Section
\ref{highder_modules}. The higher connection $\nabla$ is called
{\markdef iterative resp. integrable iterative} if $\nabla$ locally is an
iterative resp. integrable iterative connection.
\end{defn}

\begin{rem}
\begin{enumerate}
\item By Corollary \ref{automatically_projective}, every coherent
${\mathcal O}_X$-module $M$ that admits a higher connection $\nabla:M\to
\Dif_{X/K}\otimes_{{\mathcal O}_X} M$ is locally free and of finite rank.
\item Following the notion of modules with higher connection over
  rings, the categories of coherent
  ${\mathcal O}_X$-modules with higher connection, with iterative
connection and with integrable iterative connection will be denoted
  by $\HCon(X/K)$, $\ICon(X/K)$ resp.
$\IConint(X/K)$.
By standard methods of algebraic geometry, one obtains that again
$\HCon(X/K)$, $\ICon(X/K)$ and $\IConint(X/K)$ are tensor
categories over $K$ and that they are Tannakian categories. And if $X$ has a
$K$-rational point, they are in fact neutral Tannakian categories over $K$.
\end{enumerate}
\end{rem}

\section{Picard-Vessiot theory}\label{pv-theory}

In Section \ref{categorial}, we showed that the category of modules with higher
connection $\HCon(R/K)$ with fibre functor $\forget_K:\HCon(R/K)\to
\Vect(K)$ is a neutral Tannakian category over $K$ and
that $\ICon(R/K)$ and $\IConint(R/K)$ are Tannakian subcategories. By
Tannaka duality, this means that $\HCon(R/K)$ is equivalent 
to the category of finite dimensional representations of a certain
group scheme and that $\ICon(R/K)$ and $\IConint(R/K)$ are equivalent
to the category of finite dimensional representations of quotients of
this group scheme. (In positive characteristic,
the group scheme associated to $\IConint(R/K)$ is isomorphic to the
fundamental group scheme for F-divided sheaves
$\Pi^{Fdiv}(\Spec(R),\forget_K)$ in \cite{santos}, by
Thm. \ref{equiv_cat} and Rem. \ref{Fdivided}.)
Furthermore, for every module with higher (or iterative or integrable iterative) connection
$(M,\nabla)$, one obtains the Tannakian Galois group $G_{(M,\nabla)}$,
which is the group scheme corresponding to the smallest Tannakian
subcategory that contains $(M,\nabla)$.
In this section, we obtain these Galois group schemes for modules with
iterative connection from another point of view, namely as
automorphisms of solution rings (so called pseudo Picard-Vessiot rings,
or PPV-rings for short).
The fact that the automorphism group
scheme of a PPV-ring of $(M,\nabla)$ is isomorphic to the
Tannakian Galois group scheme $G_{(M,\nabla)}$ can be shown in the
same manner as in \cite{put_singer}, Thm. 2.33 for differential
modules, or as in \cite{papanikolas}, Sections 3.5~--~4.5, for
t-motives, and is sketched in Remark \ref{equivalence_of_galois_groups}
at the end of this section.

Some of the constructions and proofs given here will be quite similar
to those of T.~Dyckerhoff in \cite{dyckerhoff}, who used Galois group
schemes for obtaining a differential Galois theory in characteristic
zero over non algebraically closed fields of constants.
However, we have to deal with an additional phenomenon occurring
in positive characteristic, namely inseparability of the extensions
and nonreduced group schemes.

\smallskip

Since the Picard-Vessiot theory we provide does not only work for
modules with iterative connections, but for a large class of higher
derivations, we make the following definition.

\begin{defn}
Let $F$ be a $K$-algebra and let $\sdif$ be an \Alg{F}. A higher
derivation $\theta:F\to \sdif$ will be called {\markdef iterable} if
the following hold:
\begin{enumerate}
\item[(i)] For all $k\in\NN$ the homogeneous component $\sdif_k$ is
  generated by $\{\theta^{(k)}(r)\mid r\in F\}$.
\item[(ii)] $\theta$ can be extended to a continuous endomorphism
  $\theta_\sdif:\sdif\to \sdif$, satisfying the iteration rule
  $\theta_\sdif^{(i)}\circ
  \theta_\sdif^{(j)}=\binom{i+j}{i}\theta_\sdif^{(i+j)}$
  ($i,j\in\NN$), or equivalently satisfying $(a.\theta_\sdif)\circ
  (b.\theta_\sdif)=(a+b).\theta_\sdif$ for all $a,b\in K^{\sep}$.
\end{enumerate}
Let $\theta$ be iterable, let $M$ be an $F$-module and $\Theta$ a higher
$\theta$-derivation on $M$. As for higher connections we can define an
endomorphism $\Theta_\sdif$ on $\sdif\otimes_F M$ by
$$\Theta_\sdif(\omega \otimes x):= \theta_\sdif(\omega)\cdot
\Theta(x)$$ 
for all $\omega\in\sdif$ and $x\in M$. The $\theta$-derivation $\Theta$ 
is called {\markdef iterable} if $\Theta_\sdif$ satisfies the
iteration rule $\Theta_\sdif^{(i)}\circ
  \Theta_\sdif^{(j)}=\binom{i+j}{i}\Theta_\sdif^{(i+j)}$ ($i,j\in\NN$).
\end{defn}

\begin{exmp}
The universal derivation $\d_F:F\to \Dif_{F/K}$ is an iterable higher
derivation with extension $\d_\Dif$. Other examples are 
appropriate extensions of the universal derivation to extensions of
$F$ (e.g. to PPV-rings $R$ over $F$ for some iterable higher
differential equation; cf. Def. \ref{ppv-ring}), or the canonical
extension of $\d_{K(t)}$ to $K((t))$ (i.\,e. a higher derivation
$\theta:K((t))\to K((t))\otimes_{K(t)} \Dif_{K(t)/K}$).\\
Further examples are iterative
derivations $\phi:F\to F[[T]]$ with $\phi^{(1)}\ne 0$ (the additional
assumption is only necessary to fulfill condition (i)), and also
$m$-variate iterative derivations $\phi:F\to F[[T_1,\dots, T_m]]$
defined by F. Heiderich in his Diplomarbeit (cf. \cite{heiderich}). 

Let $\theta_F:F\to \sdif$ be an iterable higher derivation and let
$L/F$ be a finite field extension. If $L/F$ is separable, then
$\theta_F$ 
extends uniquely to a higher derivation $\theta_L:L\to L\otimes \sdif$
(cf. Prop. \ref{unique_extension} and Ex. \ref{formally_etale_ex}),
which therefore is also iterable. If $L/F$ is not separable, there may
not exists an extension of $\theta_F$ to $L$. However, if $\sdif$ has
no nilpotent
elements, there exists at most one extension, which then is
iterable. This relies on the fact that for some $k\geq 0$, $L^{p^k}$
lies in a separable algebraic extension $\tF$ of $F$, and hence
$\theta_L(s)^{p^k}= \theta_{\tF}(s^{p^k})$ determines $\theta_L(s)$
uniquely for all $s\in L$.
\end{exmp}

\begin{rem}
If a higher derivation $\theta:F\to \sdif$ is iterable, the extension
$\theta_\sdif$ is unique, since $\sdif_k$ is generated by
$\theta^{(k)}(F)$ for all $k$. Furthermore, the iteration rule implies
that $\theta_\sdif$ is an automorphism of $\sdif$.
\end{rem}

\bigskip

From now on, we fix an arbitrary field $K$, a field $F$ containing $K$, an
\Alg{F} $\sdif$ having no zero-divisors, and an iterable higher
derivation $\theta:F\to \sdif$, such that $K=\{ t\in F\mid
\theta(t)=t\}$.

We introduce some notation.
\begin{defn}
A {\markdef $\theta$-ring} is an $F$-algebra $R$ together with an
iterable higher derivation\linebreak $\theta_R:R\to R\otimes \sdif$ that
extends $\theta$. The pair $(R,\theta_R)$ is  called a {\markdef
  $\theta$-field}, if $R$ is a field. The set
$C_R:=\{ r \in R\mid \theta_R(r)=r\otimes 1\}$ is called the {\markdef ring of
  constants} of $(R,\theta_R)$. An ideal $I\ideal R$ is called a
 {\markdef $\theta$-ideal} if for all $k\in\NN$,
 $\theta_R^{(k)}(I)\subseteq I\otimes \sdif_k$;\, $R$ is 
 {\markdef $\theta$-simple} if $R$ has no proper nontrivial
 $\theta$-ideals. Localisations of $\theta$-rings are again $\theta$-rings by
 $\theta(\frac{r}{s}):=\theta(r)\theta(s)^{-1}$ (as for iterative
 derivations one easily shows that these extensions are again iterable).
 The tensor product $R\otimes_F \tR$ of two $\theta$-rings $R$ and
 $\tR$ is a $\theta$-ring by
$$\theta_{R\otimes \tR}^{(k)}(r\otimes \tilde{r}):=\sum_{i+j=k}
\theta_R^{(i)}(r)\cdot \theta_{\tR}^{(j)}(\tilde{r})\in R\otimes \tR\otimes
\sdif,$$
for all $k\geq 0$, $r\in R$ and $\tilde{r}\in \tR$.
A homomorphism of $\theta$-rings $f:R\to \tR$ is called a
{\markdef $\theta$-ho\-mo\-mor\-phism} if $\theta_{\tR}\circ f=(f\otimes
\id_\sdif)\circ \theta_R$. The set of all $\theta$-homomorphisms is
denoted by $\Hom^{\theta}(R,\tR)$. Furthermore for $\theta$-rings
$R\geq \tR\geq F$, the set of
$\theta$-automorphisms of $R$ that leave the elements of $\tR$ fixed,
is denoted by $\Aut^\theta(R/\tR)$.\\
Given a $\theta$-ring $R$ and a $K$-algebra $L$, the tensor product
$R\otimes_K L$ can be given the structure of a $\theta$-ring by 
$\theta_{R\otimes_K L}(r\otimes a)=\theta_{R}(r)\otimes a$ ($r\in
R$, $a\in L$). We say that $\theta_R$ is {\markdef extended 
  trivially} to $R\otimes_K L$.
\end{defn}

Let $A=\sum_{k=0}^\infty A_k\in \GL_n(\sdif)$ with
$A_0=\vect{1}_n$ (identity matrix) and for all $k,l\in\NN$, 
$\binom{k+l}{l}A_{k+l}=\sum_{i+j=l} \theta^{(i)}(A_{k})\cdot
A_j\in\Mat(n\times n),\sdif_{k+l}).$
Then an equation
\begin{equation*}\label{ihde}\theta(\vect{y})=A\vect{y},\quad
  \tag{*}\end{equation*} 
where $\vect{y}$ is a vector of indeterminates, is called an {\markdef
  iterable higher differential equation}.\\
Notice that the condition on the matrices $A_k$ is the same as to say
that for an $F$-vector space $M$ with basis
$\vect{b}=(b_1,\dots,b_n)$, the $\theta$-derivation 
$\Theta$ defined by $\Theta(\vect{b})=\vect{b}A^{-1}$ is iterable, and
a vector $\vect{x}\in F^n$ is a solution of the equation~\eqref{ihde},
if and only if $\vect{b}\vect{x}\in M$ 
is constant, i.\,e. satisfies $\Theta(\vect{b}\vect{x})=\vect{b}\vect{x}$.

 \begin{defn}\label{ppv-ring}
A $\theta$-ring $(R,\theta_R)$ is called a {\markdef pseudo
  Picard-Vessiot ring} (PPV-ring) for $\theta(\vect{y})=A\vect{y}$, if the
  following holds: 
\begin{enumerate}
\item $R$ is $\theta$-simple.
\item There is a fundamental solution matrix $Y\in\GL_n(R)$, \ie{} an
  invertible  matrix satisfying $\theta_R(Y)=AY$.
\item As an $F$-algebra, $R$ is generated by the coefficients of $Y$
  and by $\det(Y)^{-1}$.
\item $C_R=C_F=K$.
\end{enumerate}
The quotient field $E=\Quot(R)$ is called a {\markdef pseudo PV-field}
for the equation \eqref{ihde}.
\end{defn}

\begin{rem}\label{rem_on_pv-rings}
Analogous to \cite{mat_hart}, Prop. 3.2, resp. \cite{mat_put}, Lemma
3.2, one shows that $R$ is an 
integral domain, so the quotient field $E$ exists. Furthermore, for
every $\theta$-simple $\theta$-ring which is finitely generated as an
$F$-algebra, its constants are algebraic over $K$. Hence if $K$ is
algebraically closed,  a PPV-ring for the equation
\eqref{ihde} is given by $R=S/P$, where $S:=F[X_{ij},\det(X)^{-1}\mid
i,j=1,\dots,n]$
is a $\theta$-ring by $\theta_S(X):=AX$ and $P\ideal 
S$ is a maximal $\theta$-ideal. Hence in this case PPV-rings always
exist and -- by a similar proof as for \cite{mat_hart}, Thm. 3.4 --
are unique up to $\theta$-isomorphisms.
\end{rem}

For a PPV-ring $R/F$ we define the functor
$$\underline{\Aut}^{\theta}(R/F): (\cat{Algebras} / K) \to (\cat{Groups}),
L\mapsto \Aut^{\theta}(R_L/F_L)$$
where $F_L:=F\otimes_K L$, $R_L:=R\otimes_K L$ and 
$\theta$ resp. $\theta_R$ is extended trivially to $F_L$ resp. $R_L$.

We will show that the functor
$\underline{\Aut}^{\theta}(R/F)$ is representable by a $K$-algebra of
finite type and hence is an affine group scheme of finite type over
$K$.

\begin{lem}\label{ideal_bijection}
Let $R$ be a $\theta$-simple $\theta$-ring with $C_R=K$, let $L$ be
a finitely generated $K$-algebra and $R_L:=R\otimes_K L$, with
$\theta$-structure trivially extended from $R$.
Then there is a bijection
\begin{eqnarray*}
\I(L) &{\xymatrix{\ar@{<->}[r]&}} & \I^{\theta}(R_L) \\
I & {\xymatrix{\ar@{|->}[r]&}} & R_L (1\otimes_K I) = R\otimes_K I \\
 J\cap (1\otimes_K L) & {\xymatrix{\ar@{<-|}[r]&}}& J
\end{eqnarray*}
\noindent between the ideals of $L$ and the $\theta$-ideals of $R_L$.
\end{lem}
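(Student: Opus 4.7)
The plan is to verify that $\Phi: I \mapsto R \otimes_K I$ and $\Psi: J \mapsto J \cap (1 \otimes_K L)$ are mutually inverse bijections, identifying where the hypotheses $\theta$-simplicity of $R$ and $C_R = K$ intervene. Well-definedness of $\Phi$ is direct: the trivial extension gives $\theta_{R_L}^{(k)}(r \otimes a) = \theta_R^{(k)}(r) \otimes a$, so $\theta_{R_L}^{(k)}(R \otimes_K I)$ lies inside $(R \otimes_F \sdif_k) \otimes_K I$. For $\Psi \circ \Phi = \mathrm{id}$, I would tensor the exact sequence $0 \to I \to L \to L/I \to 0$ with $R$ over $K$ (flat, since $K$ is a field) to obtain $(R \otimes_K I) \cap (1 \otimes_K L) = 1 \otimes_K I$.

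The substantive step is $\Phi \circ \Psi = \mathrm{id}$. Given a $\theta$-ideal $J \subseteq R_L$, set $\mathfrak{a} := J \cap (1 \otimes_K L)$. Since $R \otimes_K \mathfrak{a} \subseteq J$, I would pass to the quotient $\overline{R_L} := R \otimes_K (L/\mathfrak{a})$ with $\bar{J} := J/(R \otimes_K \mathfrak{a})$, reducing the claim to $\bar{J} = 0$ under the additional property $\bar{J} \cap (1 \otimes_K (L/\mathfrak{a})) = 0$. Suppose for contradiction $\bar{J}$ contains $x = \sum_{i=1}^n r_i \otimes a_i \neq 0$ with $a_1, \ldots, a_n \in L/\mathfrak{a}$ linearly independent over $K$ and $n$ minimal, and consider
$$\mathfrak{b} := \bigl\{ r \in R \mid \exists\, r_2, \dots, r_n \in R:\ r \otimes a_1 + \sum_{i=2}^n r_i \otimes a_i \in \bar{J} \bigr\}.$$
Since $F$ is a field, each $\sdif_k$ is a free $F$-module; fixing an $F$-basis of $\sdif_k$ and comparing coordinates shows that $\mathfrak{b}$ is an ideal of $R$ closed under every $\theta_R^{(k)}$, hence a nonzero $\theta$-ideal. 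By $\theta$-simplicity of $R$, $\mathfrak{b} = R$, so I may take $r_1 = 1$. Applying $\theta_{R_L}^{(k)}$ to the new $x$ for $k \geq 1$ yields $\sum_{i=2}^n \theta_R^{(k)}(r_i) \otimes a_i \in \bar{J} \otimes_F \sdif_k$; each basis coefficient is an element of $\bar{J}$ with only $n-1$ summands, hence vanishes by minimality of $n$, and linear independence of the $a_i$ then forces $\theta_R^{(k)}(r_i) = 0$ for all $k \geq 1$ and $i \geq 2$. Consequently $r_i \in C_R = K$, so $x = 1 \otimes (a_1 + \sum_{i \geq 2} r_i a_i)$ is a nonzero element of $1 \otimes_K (L/\mathfrak{a})$ lying in $\bar{J}$, contradicting $\bar{J} \cap (1 \otimes_K (L/\mathfrak{a})) = 0$.

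The main technical obstacle is the verification that $\mathfrak{b}$ is a $\theta$-ideal: one must expand $\theta_{R_L}^{(k)}$ of a representative $y \in \bar{J}$ with respect to a chosen $F$-basis of $\sdif_k$, observe that each resulting coordinate again lies in $\bar{J}$ and is still supported on $a_1, \dots, a_n$, and read off the $a_1$-coefficient to identify all basis coefficients of $\theta_R^{(k)}(r)$ as elements of $\mathfrak{b}$, so that $\theta_R^{(k)}(r) \in \mathfrak{b} \otimes_F \sdif_k$. This step, together with the parallel coordinate-by-coordinate argument at the end exploiting minimality of $n$, is where the $F$-freeness of the $\sdif_k$ (guaranteed by $F$ being a field) enters essentially.
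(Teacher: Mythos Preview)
Your proof is correct and follows essentially the same strategy as the paper: both establish that the ``coefficient ideal'' $\mathfrak{b}$ (the paper's $\fA_{N_0,i_0}$) is a $\theta$-ideal by expanding along an $F$-basis of $\sdif_k$, invoke $\theta$-simplicity to normalize one coefficient to $1$, and then use minimality of the support together with $C_R=K$ to force the element into $1\otimes_K L$. The only differences are organizational---you quotient by $R\otimes_K\mathfrak{a}$ up front and argue by contradiction, whereas the paper works inside $J$ directly and finishes with an induction on support size; and for $\Psi\circ\Phi$ you use flatness of $R$ over $K$ rather than the paper's basis-and-constants computation---but neither changes the substance of the argument.
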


\begin{proof}
Obviously, both maps are well defined, so we only have to show that
they are inverses to each other.

\noindent (i) We need to show that for $I\in \I(L)$, we have
$(R\otimes_K I)\cap  (1\otimes_K L)=I$.\\
It is clear that $I$ is contained in the left side. For the other inclusion,
let $\{e_i\mid i\in \tilde{N}\}$ be a $K$-basis of $I$. Then
  $(R\otimes_K I)$ is a free $R$-module with the same basis and an element
  $f=\sum_{i\in \tilde{N}} r_i\otimes e_i\in (R\otimes_K I)$ is
  constant, if and only if all $r_i$ are constant, \ie{} if $f\in I$.

\noindent (ii) We need to show that for $J\in \I_{\theta}(R_L)$, we
have $R\otimes_K (J\cap (1\otimes_K L))=J$.\\
It is clear that the left side is contained in $J$, since $J$ is an
  ideal. For the other inclusion, let $\{e_i\mid i\in N\}$ be a
  $K$-basis of $L$, where $N$ denotes an index set. Then $\{e_i\mid
  i\in N\}$ also is a basis for the free $R$-module $R_L$.

For any subset $N_0\subseteq N$ and $i_0\in N_0$, let
  $\fA_{N_0,i_0}\ideal R$ denote the ideal of all $r\in R$ such that there
  exists an element $g=\sum_{j\in N_0} s_j\otimes e_j\in J$ with
  $s_{i_0}=r$. We will show that $\fA_{N_0,i_0}$ is a $\theta$-ideal
  of $R$ and so by $\theta$-simplicity of $R$ is equal to $(0)$ or to
  $R$:\\
Let $r\in \fA_{N_0,i_0}$, $g=\sum_{j\in N_0} s_j\otimes e_j\in J$ with
  $s_{i_0}=r$ and $k\in \NN$. We have to show that
  $\theta_R^{(k)}(r)\in \fA_{N_0,i_0}\otimes \sdif_k$. So let
  $\{\omega_\alpha\}$ be an $F$-basis of $\sdif_k$ and let
$g_\alpha\in R_L$ such that $\theta^{(k)}(g)=\sum_\alpha g_\alpha\otimes
  \omega_\alpha$. Since $J$ is a $\theta$-ideal, we have $g_\alpha\in
  J$.
On the other hand, let $\theta^{(k)}(s_j)=\sum_\alpha s_{\alpha,
  j}\otimes \omega_\alpha$ for some $s_{\alpha, j}\in R$, then
$$\theta^{(k)}(g)=\sum_{j\in N_0} \theta^{(k)}(s_j)\otimes e_j
=\sum_{j\in N_0} \sum_\alpha s_{\alpha, j}\otimes e_j \otimes \omega_\alpha.$$
 So $g_\alpha=\sum_{j\in N_0} s_{\alpha,j}\otimes e_j$ and therefore 
$s_{\alpha,i_0}\in \fA_{N_0,i_0}$. Hence,
$$\theta_R^{(k)}(r)=\theta_R^{(k)}(s_{i_0})=\sum_\alpha s_{\alpha,
  i_0}\otimes \omega_\alpha\in \fA_{N_0,i_0}\otimes \sdif_k.$$

Now, let $N_0\subset N$ be a subset, which is minimal for the
  property that $\fA_{N_0,i_0}\ne (0)$ for at least one index $i_0\in
  N_0$ (minimal in the lattice of subsets). 
So there exists $f=\sum_{j\in N_0} r_j\otimes e_j\in J$ with
$r_{i_0}=1$ and by minimality of $N_0$, for all $k>0$ we obtain
$\theta^{(k)}(f)=\sum_{\substack{j\in N_0 \\ j\ne i_0}} \theta^{(k)}
  (r_j)\otimes e_j=0$.
Hence $f\in J\cap (1\otimes_K L)$.\\
 Now let $g=\sum_{j\in N} s_j\otimes
  e_j\in J$ be an arbitrary element and denote by $N_1$ the set of
  indices $j$ where $s_j\ne 0$. By definition, for all $i\in N_1$,
  $\fA_{N_1,i}\ne (0)$. Hence there is $N_0\subseteq N_1$ minimal as
  above, $i_0\in N_0$ and $f=\sum_{j\in N_0} r_j\otimes e_j\in J\cap
  (1\otimes_K L)$ with $r_{i_0}=1$. By induction on the magnitude of
  $N_1$, we may assume that $g-s_{i_0}f\in R\otimes_K
  (J\cap (1\otimes_K L))\subset J$. So $g=(g-s_{i_0}f)+s_{i_0}f\in  R\otimes_K
  (J\cap (1\otimes_K L))$ and hence $R\otimes_K
  (J\cap (1\otimes_K L))=J$.
\end{proof}

\begin{prop}\label{central_iso}
Let $R$ be a PPV-ring for the equation \eqref{ihde} and let $T\geq F$ be a
$\theta$-simple 
$\theta$-ring with $C_T=K$ such that there exists a fundamental solution
matrix $Y\in \GL_n(T)$. Then there exists a finitely generated $K$-algebra
$U$ and a $T$-linear $\theta$-isomorphism
$$\gamma_T: T\otimes_F R \to T\otimes_K U,$$
where (again) the $\theta$-structure is extended trivially to
$T\otimes_K U$.\\
(Actually $U$ is isomorphic to the ring of constants of $T\otimes_F R$.)
\end{prop}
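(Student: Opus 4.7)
The plan is to construct the isomorphism explicitly from the two fundamental solution matrices and then identify its kernel via Lemma \ref{ideal_bijection}. Let $Y_R\in\GL_n(R)$ be a fundamental solution matrix in $R$ (as guaranteed by the PPV-ring structure) and let $Y_T\in\GL_n(T)$ be the one given in $T$. Working inside $T\otimes_F R$ (with its natural $\theta$-structure), form the matrix
$$Z := Y_T^{-1}\cdot (1\otimes Y_R)\in \GL_n(T\otimes_F R).$$
Since $\theta$ is a ring homomorphism, $\theta(Y_T^{-1})=(A Y_T)^{-1}=Y_T^{-1}A^{-1}$, and $\theta(1\otimes Y_R)=A\cdot (1\otimes Y_R)$, so $\theta(Z)=Y_T^{-1}A^{-1}\cdot A(1\otimes Y_R)=Z$. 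Hence the entries of $Z$ and $\det(Z)^{-1}$ lie in the constant ring of $T\otimes_F R$. Define
$$U_0 := K[Z_{ij},\det(Z)^{-1}\mid i,j=1,\dots,n]\ \subseteq\ C_{T\otimes_F R},$$
a finitely generated $K$-subalgebra, and consider the $T$-linear $\theta$-homomorphism
$$\mu:\ T\otimes_K U_0\ \longrightarrow\ T\otimes_F R,\qquad t\otimes u\ \mapsto\ t\cdot u,$$
where $T\otimes_K U_0$ carries the $\theta$-structure trivially extended from $T$ (and $\mu$ is a $\theta$-homomorphism because $U_0$ is constant in $T\otimes_F R$).

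Next I would verify surjectivity of $\mu$. The $F$-algebra $R$ is generated by the entries of $Y_R$ and $\det(Y_R)^{-1}$, so $T\otimes_F R$ is generated as a $T$-algebra by the entries of $1\otimes Y_R$ and $\det(1\otimes Y_R)^{-1}$. But $1\otimes Y_R=Y_T\cdot Z$, so these generators lie in $T\cdot U_0=\mu(T\otimes_K U_0)$. Hence $\mu$ is surjective, and $T\otimes_F R\cong (T\otimes_K U_0)/\ker(\mu)$.

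For injectivity modulo a quotient of $U_0$, note that $\ker(\mu)$ is a $\theta$-ideal of $T\otimes_K U_0$. Since $T$ is $\theta$-simple with $C_T=K$ and $U_0$ is a finitely generated $K$-algebra, Lemma \ref{ideal_bijection} applies and gives $\ker(\mu)=T\otimes_K I_0$ for the ideal $I_0:=\ker(\mu)\cap (1\otimes U_0)$ of $U_0$. Setting $U:=U_0/I_0$ (a finitely generated $K$-algebra), $\mu$ descends to a $T$-linear $\theta$-isomorphism
$$\gamma_T^{-1}:\ T\otimes_K U\ \xrightarrow{\ \isom\ }\ T\otimes_F R,$$
and we take $\gamma_T$ to be its inverse.

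Finally, for the parenthetical identification of $U$ with $C_{T\otimes_F R}$, I would use the general fact that whenever $C_T=K$, writing elements of $T\otimes_K L$ in a $K$-basis of $L$ shows that the constants of $T\otimes_K L$ (with $\theta$ trivially extended) are exactly $L$. Applied to $L=U$, this yields $C_{T\otimes_F R}\cong C_{T\otimes_K U}=U$, as claimed. The only mildly subtle point is the invocation of Lemma \ref{ideal_bijection}: one must ensure that the $K$-algebra $U_0$ arranged above is genuinely finitely generated (which it is by construction from the finitely many entries of $Z$) so that the lemma's hypotheses are met; everything else is a direct calculation.
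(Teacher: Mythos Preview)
Your proof is correct and rests on the same two ingredients as the paper's: the constant matrix $Y_T^{-1}(1\otimes Y_R)$ and Lemma~\ref{ideal_bijection}. The organization differs slightly. The paper works top-down: it starts from the presentation $R=F[X,X^{-1}]/P$, builds the explicit $T$-linear $\theta$-isomorphism $T\otimes_F F[X,X^{-1}]\to T\otimes_K K[Z,Z^{-1}]$ by $X\mapsto Y\otimes Z$ (abstract variables $Z$), then pushes the $\theta$-ideal $T\otimes P$ across and applies Lemma~\ref{ideal_bijection} to write its image as $T\otimes I$, setting $U=K[Z,Z^{-1}]/I$. You instead work bottom-up inside $T\otimes_F R$, realizing $U_0$ directly as the $K$-subalgebra generated by the entries of the concrete constant matrix $Z$. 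These are dual descriptions of the same construction: your $U_0$ is exactly the paper's $U$.

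One small observation: in your argument the quotient by $I_0$ is vacuous. Since $U_0$ is by construction a \emph{subalgebra} of $T\otimes_F R$, any element $1\otimes u\in\ker(\mu)$ satisfies $u=0$ in $T\otimes_F R$ and hence $u=0$ in $U_0$; thus $I_0=0$, and Lemma~\ref{ideal_bijection} already yields $\ker(\mu)=T\otimes_K 0=0$. So your $\mu$ is an isomorphism on the nose and $U=U_0$. This is not an error, just a redundancy; it also makes the parenthetical claim $U\cong C_{T\otimes_F R}$ slightly cleaner, since $U_0\subseteq C_{T\otimes_F R}$ is immediate and equality follows from your last paragraph.
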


\begin{proof}
$R$ is obtained as a quotient of $F[X,X^{-1}]$ with
$\theta$-structure given by $\theta(X)=AX$ (for short we write
$F[X,X^{-1}]$ instead of $F[X_{ij},\det(X)^{-1}]$) by a maximal
$\theta$-ideal $P\ideal F[X,X^{-1}]$. Let $L:=K[Z,Z^{-1}]=K[\GL_n]$. We then
define a $T$-linear homomorphism
$$\gamma_T:T \otimes_F F[X,X^{-1}]\rightarrow T \otimes_K
K[Z,Z^{-1}]$$
by $X_{ij}\mapsto \sum_{k=1}^n Y_{ik}\otimes Z_{kj}$ (or $X\mapsto
Y\otimes Z$ for short).
$\gamma_T$ is indeed an isomorphism and -- if we extend the
$\theta$-structure trivially to $T\otimes_K K[Z,Z^{-1}]$ -- $\gamma_T$ is a
$\theta$-isomorphism.\\
By the previous lemma, the $\theta$-ideal $\gamma_T(T\otimes P)$ is
equal to $T\otimes I$ for an ideal $I\ideal K[Z,Z^{-1}]$. So for $U:=
K[Z,Z^{-1}]/I$, $\gamma_T$ induces a
$\theta$-isomorphism 
$$\gamma_T:T\otimes_F R\to T\otimes_K U.$$
\end{proof}

\begin{prop}\label{aut_is_affine}
Let $R$ be a PPV-ring over $F$. Then the group functor
$\underline{\Aut}^{\theta}(R/F)$ is represented by the finitely generated
$K$-algebra $U=C_{R\otimes_F R}$, \ie{} $\underline{\Aut}^{\theta}(R/F)$ is
an affine group scheme of finite type over $K$, which we call the
{\markdef Galois group scheme} $\underline{\Gal}(R/F)$ of $R$ over $F$
or also the Galois group scheme $\underline{\Gal}(E/F)$ of
$E:=\Quot(R)$ over $F$.
\end{prop}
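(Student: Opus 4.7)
The strategy is to apply Proposition~\ref{central_iso} with $T = R$, which yields an $R$-linear $\theta$-isomorphism
\[
\gamma_R : R \otimes_F R \xrightarrow{\sim} R \otimes_K U,
\]
where $U = C_{R \otimes_F R}$ is finitely generated over $K$ (from the proof it arises as a quotient $K[Z_{ij}, \det(Z)^{-1}]/I$ of the coordinate ring of $\GL_{n,K}$). The aim is then to exhibit, for every $K$-algebra $L$, a natural bijection
\[
\Phi_L : \Hom_{K\text{-alg}}(U, L) \xrightarrow{\sim} \Aut^\theta(R_L / F_L).
\]

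Given $\lambda \in \Hom_{K\text{-alg}}(U, L)$, I would define
\[
\sigma_\lambda : R \xrightarrow{\,r \mapsto 1 \otimes r\,} R \otimes_F R \xrightarrow{\gamma_R} R \otimes_K U \xrightarrow{\id \otimes \lambda} R_L
\]
and extend it $L$-linearly to a $\theta$-$F_L$-algebra endomorphism of $R_L$. Unwinding $\gamma_R$ on the fundamental solution matrix (recall $X \mapsto Y \otimes Z$) gives $\sigma_\lambda(Y) = Y \cdot M$ with $M = (\lambda(\overline{Z}_{ij})) \in \GL_n(L)$. Conversely, for $\sigma \in \Aut^\theta(R_L/F_L)$, the matrix $M_\sigma := Y^{-1}\sigma(Y)$ has $\theta$-constant entries (by the usual computation $\theta(M_\sigma)=(AY)^{-1}A\sigma(Y)=M_\sigma$), hence lies in $\GL_n(C_{R_L}) = \GL_n(L)$; the equality $C_{R_L} = L$ is immediate by expanding elements of $R_L = R \otimes_K L$ in a $K$-basis of $L$ and invoking $C_R = K$. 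This matrix then defines $\lambda_\sigma : U \to L$ by $\overline{Z}_{ij} \mapsto (M_\sigma)_{ij}$, and well-definedness on $U = K[Z, Z^{-1}]/I$ follows from $\sigma$ being a $\theta$-homomorphism: for any $f \in P$ (the defining $\theta$-ideal of $R$), one has $f(YM_\sigma) = f(\sigma(Y)) = \sigma(f(Y)) = 0$.

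The main step requiring real work is to verify that $\sigma_\lambda$ is actually an \emph{auto}morphism. Surjectivity is straightforward: since $M \in \GL_n(L) \subset \GL_n(F_L)$ and $\sigma_\lambda$ fixes $F_L$, the image of $\sigma_\lambda$ contains $YM$, $M^{-1}$ and hence $Y = (YM)\cdot M^{-1}$ and $\det(Y)^{-1}$, which together with $F_L$ generate $R_L$. Injectivity I would establish by exhibiting a two-sided inverse $\sigma_\lambda^{-1} = \sigma_{\lambda'}$, where $\lambda' : U \to L$ is characterised by $\overline{Z}_{ij} \mapsto (M^{-1})_{ij}$; well-definedness of $\lambda'$ requires $I$ to be stable under the inversion map $Z \mapsto Z^{-1}$ on $K[\GL_n]$. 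This Hopf-ideal property of $I$ (along with the comultiplication and counit axioms) is obtained by applying Proposition~\ref{central_iso} to $R \otimes_F R \otimes_F R$ and comparing the three natural isomorphisms produced by the different pairings, which endows $U$ with the structure of a commutative Hopf algebra over $K$.

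Finally, the compositions $\lambda \mapsto \sigma_\lambda \mapsto \lambda_{\sigma_\lambda}$ and $\sigma \mapsto \lambda_\sigma \mapsto \sigma_{\lambda_\sigma}$ reduce to identities by inspection on $Y$, and naturality in $L$ is immediate from the construction. Consequently $U$ represents $\underline{\Aut}^\theta(R/F)$, and the transferred Hopf algebra structure makes $\underline{\Gal}(R/F) = \Spec(U)$ an affine group scheme of finite type over $K$.
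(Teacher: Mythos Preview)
Your approach and the paper's both hinge on Proposition~\ref{central_iso}, but they diverge on how bijectivity of $\sigma_\lambda$ is established. The paper first proves, once and for all, that \emph{every} $F_L$-linear $\theta$-endomorphism $f:R_L\to R_L$ is an automorphism: surjectivity is as you describe, while for injectivity one observes that $\Ker(f)$ is a $\theta$-ideal of $R_L$, hence by Lemma~\ref{ideal_bijection} of the form $R\otimes_K J$ for some $J\ideal L$, and $J=0$ since $f$ is $L$-linear. With that in hand, representability reduces to a short chain of formal adjunctions
\[
\Aut^{\theta}(R_L/F_L)=\Hom^{\theta}_{F_L}(R_L,R_L)\cong\Hom^{\theta}_{R}(R\otimes_F R,\,R_L)\cong\Hom^{\theta}_{R}(R\otimes_K U,\,R_L)\cong\Hom_K(U,L),
\]
the middle step being $\gamma_R$ and the last using $C_{R_L}=L$.

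Your explicit construction is a faithful unwinding of this chain and is correct apart from one point: you derive injectivity by building an inverse $\sigma_{\lambda'}$, which requires $I$ to be stable under the antipode $Z\mapsto Z^{-1}$ of $K[\GL_n]$. The argument you sketch---comparing isomorphisms on $R\otimes_F R\otimes_F R$---yields the comultiplication and its coassociativity, not the antipode. The antipode comes instead from the swap involution $a\otimes b\mapsto b\otimes a$ on $R\otimes_F R$ (transporting it through $\gamma_R$ is slightly delicate since the swap is not $R$-linear on the left factor); alternatively, once a bialgebra structure is in place and representability is known, the antipode follows from the functor being group-valued---but that would be circular here. This is fixable, yet it is exactly the extra work the paper sidesteps by invoking Lemma~\ref{ideal_bijection} directly on $\Ker(f)$.
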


\begin{proof}
First we show that for every $K$-algebra $L$ any $F_L$-linear
$\theta$-homomorphism $f:R_L\to R_L$ is an 
isomorphism: The kernel of such a homomorphism $f$ is a $\theta$-ideal of
$R_L$ and so by Lemma \ref{ideal_bijection}, it is generated by
constants, \ie{} elements in $1\otimes L$. But $f$ is $L$-linear and so
$\Ker(f)=\{0\}$. If $X\in \GL_n(R)$ is a fundamental matrix, then
$f(X)\in \GL_n(R_L)$ is also a fundamental matrix and so there is a
matrix $D\in \GL_n(C_{R_L})=\GL_n(L)$ such that $X=f(X)D=f(XD)$. Hence
$X_{ij},\det(X)^{-1}\in \Ima(f)$ ($i,j=1,\dots, n$), and since $R$ is
generated over $F$ by the $X_{ij}$ and by $\det(X)^{-1}$, the
homomorphism $f$ is also surjective. 

Using the isomorphism $\gamma:=\gamma_R$ of Prop. \ref{central_iso},
for a $K$-algebra $L$, we obtain a chain of isomorphisms:
\begin{eqnarray*}
\Aut^{\theta}(R_L/F_L) &=& \Hom_{F_L}^{\theta}(R_L,R_L)=
\Hom_{F_L}^{\theta}(F_L\otimes_F R,R_L)\\
&\isom & \Hom_{R}^{\theta}(R\otimes_{F} R,R_L) \\
&\isom & \Hom_{R}^{\theta}(R\otimes_K U,R_L) \\
&\isom & \Hom_{K}^{\theta}(U,R_L) \\
&\isom & \Hom_K(U,L)
\end{eqnarray*}
So $U$ is representing the functor $\underline{\Aut}^{\theta}(R/F)$.
\end{proof}

\begin{rem}\label{rho}
A careful look at the isomorphisms in the previous proof shows that
the universal object $\id_U\in\Hom_K(U,U)$ corresponds to the
$\theta$-automorphism $\rho\otimes \id_U:R\otimes_K U\to R\otimes_K U$,
where $\rho=\gamma_R\circ (1\otimes \id_R):R\to R\otimes_F R\to
R\otimes_K U$. Furthermore we obtain that the action of $g\in
\underline{\Aut}^{\theta}(R/F)(L)= \Hom_K(U,L)$ on $r\in R$ is given
by 
$$g.r=(\id_R\otimes g)\bigl(\gamma_R(1\otimes r)\bigr)\in R\otimes_K L.$$
\end{rem}

\begin{cor}\label{spec_is_torsor}
Let $R$ be a PPV-ring over $F$ and $\G:=\underline{\Gal}(R/F)$ the
Galois group scheme of $R$. Then $\Spec(R)$ is a $\G_F$-torsor.
\end{cor}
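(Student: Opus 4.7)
The plan is to use Proposition \ref{central_iso} and Remark \ref{rho} directly: the torsor axiom in ring-theoretic form will turn out to be exactly the assertion that $\gamma_R$ is an isomorphism.

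First I would record faithful flatness. Since $F$ is a field, every nonzero $F$-module is free hence faithfully flat, and $R \supseteq F$ is nonzero, so $\Spec(R) \to \Spec(F)$ is faithfully flat and quasi-compact. Next I would write down the action of $\G_F$ on $\Spec(R)$. By Remark \ref{rho}, the map
\[
\rho = \gamma_R \circ (1 \otimes \id_R) : R \to R \otimes_K U
\]
is a $\theta$-homomorphism, and it corresponds (under the adjunction) to the universal automorphism of $R \otimes_K U$ over $U$. Viewing $R \otimes_K U$ as $R \otimes_F (F \otimes_K U)$, the map $\rho$ is the comorphism of an $F$-scheme morphism
\[
\alpha : \G_F \times_{\Spec(F)} \Spec(R) \longrightarrow \Spec(R),
\]
and the fact that $\rho$ comes from the representing object $U$ of the group functor $\underline{\Aut}^\theta(R/F)$ forces the coaction identities (coassociativity and counitality); equivalently, on $L$-valued points, $\alpha$ is the group action $\G_F(L)\times \Spec(R)(L) \to \Spec(R)(L)$ already defined functorially.

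Now for the torsor condition, I have to show that the shear map
\[
\Psi : \G_F \times_{\Spec(F)} \Spec(R) \longrightarrow \Spec(R) \times_{\Spec(F)} \Spec(R), \qquad (g,x) \longmapsto (g.x, x),
\]
is an isomorphism. Dualising, $\Psi$ corresponds to the $F$-algebra homomorphism
\[
\Psi^{\#} : R \otimes_F R \longrightarrow R \otimes_F (F \otimes_K U) = R \otimes_K U, \qquad r \otimes s \longmapsto (r \otimes 1)\cdot \rho(s).
\]
Since $\gamma_R$ is $R$-linear with respect to the left tensor factor and satisfies $\gamma_R(1 \otimes s) = \rho(s)$ by Remark \ref{rho}, one has $\gamma_R(r \otimes s) = (r \otimes 1)\cdot \gamma_R(1 \otimes s) = \Psi^{\#}(r \otimes s)$, so $\Psi^{\#} = \gamma_R$. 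Proposition \ref{central_iso} (applied with $T = R$) tells us that $\gamma_R$ is an isomorphism, so $\Psi$ is an isomorphism of $F$-schemes.

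Combining faithful flatness of $\Spec(R) \to \Spec(F)$ with the isomorphism $\Psi$ yields that $\Spec(R)$ is a (faithfully flat affine) $\G_F$-torsor over $\Spec(F)$. The only substantive point is the identification $\Psi^{\#} = \gamma_R$, and this is essentially formal once one unwinds Remark \ref{rho}; the $\theta$-simplicity and constants hypotheses on $R$ were already absorbed into Proposition \ref{central_iso}, so no further analytic work is needed here.
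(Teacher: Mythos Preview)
Your proof is correct and follows essentially the same route as the paper: both arguments identify the shear map with $\gamma_R$ via Remark \ref{rho} and $R$-linearity, then invoke Proposition \ref{central_iso} to conclude it is an isomorphism. Your inclusion of the faithful-flatness remark is a nice addition (the paper leaves it implicit since $F$ is a field); be slightly careful with the ordering of tensor factors when matching $\Psi$ with $\Psi^\#$, but the content is right.
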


\begin{proof}
The isomorphism $\gamma=\gamma_R$ of Proposition \ref{central_iso}
determines an isomorphism of schemes
$$\Spec(\gamma): \Spec(R)\times_F \G_F=\Spec(R)\times_K
\G\to \Spec(R)\times_F \Spec(R).$$
By the previous remark and $R$-linearity of $\gamma$, the composition
of $\Spec(\gamma)$ with the second projection is the morphism which
describes the 
action of $\G_F$ on $\Spec(R)$, and the composition of $\Spec(\gamma)$
with the first projection is equal to the first projection
$\Spec(R)\times_F \G_F\to \Spec(R)$.
In other words, $\Spec(R)$ is a $\G_F$-torsor.
\end{proof}

The next proposition shows that being a torsor indicates a $\theta$-simple
$\theta$-ring to be a PPV-ring.

\begin{prop}\label{torsor_implies_ppv}
Let $R/F$ be a $\theta$-simple $\theta$-ring with constants
$C_R=K$. Further let 
$\G\leq \GL_{n,K}$ be an affine group scheme over $K$ and assume that
$\Spec(R)$ is a $\G_F$-torsor such that the corresponding isomorphism
$\gamma:R\otimes_F R\to R\otimes_K K[\G]$ is a $\theta$-isomorphism.
Then $R$ is a PPV-ring over $F$.
\end{prop}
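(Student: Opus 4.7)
The plan is to produce a fundamental solution matrix $Y\in\GL_n(R)$ using the torsor structure and faithfully flat Hopf--Galois descent, then to show that the corresponding matrix $A:=\theta_R(Y)\cdot Y^{-1}$ lies in $\GL_n(\sdif)$ and that $R=F[Y_{ij},\det(Y)^{-1}]$; $\theta$-simplicity and $C_R=K$ are given by hypothesis, so together these exactly verify Definition \ref{ppv-ring}.

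For the setup, let $\pi\colon K[\GL_n]\twoheadrightarrow K[\G]=U$ be the Hopf algebra surjection coming from $\G\leq \GL_{n,K}$, and let $\bar X\in\GL_n(U)$ be the image of the tautological matrix. Define a coaction $\rho\colon R\to R\otimes_K U$ by $\rho(r):=\gamma(1\otimes r)$. Since $\gamma$ is $R$-linear (on the left factor) and a $\theta$-isomorphism, and since the $\theta$-structure on $R\otimes_K U$ is trivial in the $U$-direction, $\rho$ is a $\theta$-homomorphism, and the torsor hypothesis translates into the statement that $R/F$ is a faithfully flat right $U$-Hopf--Galois extension.

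The core step is to construct $Y$. Consider the standard $U$-comodule $V=K^n$ with $\beta(e_j)=\sum_i e_i\otimes\bar X_{ij}$, equip $N:=R\otimes_K V$ with the diagonal $U$-coaction $(r\otimes v)\mapsto (r_{(0)}\otimes v_{(0)})\otimes r_{(1)}v_{(1)}$, and apply faithfully flat Hopf--Galois descent to obtain that the coinvariants $N^{\G}$ form a locally free $F$-module of rank $n$ with $R\otimes_F N^{\G}\xrightarrow{\sim} N$. Because $F$ is a field, $N^{\G}$ is free; choosing an $F$-basis $y_1,\dots,y_n$ and writing $y_j=\sum_i Y_{ij}\otimes e_i$ yields a matrix $Y=(Y_{ij})\in\GL_n(R)$, and the coinvariance of each $y_j$ unwinds to the matrix identity $\rho(Y)=Y\cdot(1\otimes\bar X)$ (modulo the chosen coaction convention).

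Given $Y$, set $A:=\theta_R(Y)\cdot Y^{-1}\in\GL_n(R\otimes_F\sdif)$. Extending $\rho$ trivially in the $\sdif$-direction and combining the $\theta$-compatibility of $\rho$ with $\theta$-fixity of $\bar X$, one verifies directly from the relation $\rho(Y)=Y(1\otimes\bar X)$ that $\rho(A)=A\otimes 1$; Hopf--Galois descent applied to the $F$-module $\sdif$ gives $(R\otimes_F\sdif)^{\G}=\sdif$, so in fact $A\in\GL_n(\sdif)$. Iterability of $\theta_R$ then transfers to the iteration identities on the homogeneous components $A_k$ required for $\theta(\vect{y})=A\vect{y}$ to be a genuine iterable higher differential equation with fundamental matrix $Y$. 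For generation, let $R':=F[Y_{ij},\det(Y)^{-1}]\subset R$; the identity $1\otimes\bar X = Y^{-1}\rho(Y)$ places $1\otimes U$ into the $R$-subalgebra of $R\otimes_K U$ generated by $\rho(R')$, so together with $R\otimes 1\subset \gamma(R\otimes_F R')$ we obtain $\gamma(R\otimes_F R')=R\otimes_K U=\gamma(R\otimes_F R)$, and the bijectivity of $\gamma$ combined with faithful flatness of $R/F$ forces $R'=R$. I expect the descent step producing $Y$ to be the technical heart of the proof: one must invoke Hopf--Galois descent in a way that is compatible with the $\theta$-structure, and it is precisely the hypothesis that $\gamma$ is a $\theta$-isomorphism that makes the $Y$ so constructed automatically interact correctly with $\theta_R$.
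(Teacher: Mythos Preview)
Your argument is correct and is, at its core, the same as the paper's proof, just phrased in Hopf-algebraic rather than geometric language. The paper forms the associated bundle $\Spec(R)\times^{\G_F}\GL_{n,F}$, observes it is a $\GL_{n,F}$-torsor over $\Spec(F)$, and invokes Hilbert~90 to trivialise it; the resulting $\G_F$-equivariant closed immersion $\Spec(R)\hookrightarrow\GL_{n,F}$ is exactly the dual of an epimorphism $F[X,X^{-1}]\twoheadrightarrow R$, and one sets $Y:=\text{image of }X$. Your faithfully flat Hopf--Galois descent on the standard comodule $V=K^n$, together with the observation that a finite locally free module over a field is free, is precisely the comodule-theoretic translation of ``the associated $\GL_n$-torsor over a field is trivial''. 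The step showing $A=\theta_R(Y)Y^{-1}\in\GL_n(\sdif)$ via $\G$-invariance is identical in both proofs.

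The one genuine structural difference is in how generation is obtained: the paper's formulation gives a surjection $F[X,X^{-1}]\to R$ for free, so $R=F[Y_{ij},\det(Y)^{-1}]$ is immediate, whereas you must argue it separately. Your argument for this is fine (and in fact your computation $1\otimes\bar X = (Y^{-1}\otimes 1)\cdot\rho(Y)$ combined with faithful flatness of $R/F$ is exactly what underlies the surjectivity in the geometric picture). One small expository point: the $\theta$-structure plays no role in \emph{constructing} $Y$; the hypothesis that $\gamma$ is a $\theta$-isomorphism is used only afterwards, to force $A$ to be $\G$-invariant. Your final sentence slightly overstates this.
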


\begin{proof}
Since $\Spec(R)$ is a $\G_F$-torsor, the fibration
$\Spec(R)\times^{\G_F} \GL_{n,F}$ is a $\GL_{n,F}$-torsor.
(The scheme $\Spec(R)\times^{\G_F} \GL_{n,F}$ is obtained as the
quotient of the 
direct product by the  action of $\G_F$ given by $(x,h).g:=(xg,g^{-1}h)$,
and is a right $\GL_{n,F}$-scheme by the action on the second factor.) By
Hilbert~90, every $\GL_{n,F}$-torsor is trivial, \ie{} we have a
$\GL_{n,F}$-equivariant isomorphism \linebreak
$\Spec(R)\times^{\G_F} \GL_{n,F} \to\GL_{n,F}$. Then the closed
embedding
$\Spec(R)\hookrightarrow \Spec(R)\times^{\G_F} \GL_{n,F} \to\GL_{n,F}$
induces an epimorphism $F[X,X^{-1}]\to R$ which is
$\G_F$-equivariant. Let the image of $X$ be denoted by $Y$. We then obtain
that the action of $\G$ on $Y$ is given by $Y\mapsto Y\cdot g$ for any
$L$-valued point $g\in\G(L)\subset \GL_{n}(L)$. Since by
assumption for every 
$K$-algebra $L$, the action of
$\G(L)$ commutes with $\theta$, the matrix $\theta(Y)Y^{-1}$ is
$\G$-invariant. So $\theta(Y)Y^{-1}=:A\in \GL_n(\sdif)$, and $Y$ is a
fundamental solution matrix for the equation
$\theta(\vect{y})=A\vect{y}$.
Hence $R$ is a PPV-ring.
\end{proof}

\begin{rem}\label{equivalence_of_galois_groups}
As indicated in the beginning of this section, the Tannakian Galois
group scheme
$G_{(M,\nabla)}$
  of a module with iterative connection and the 
Galois group scheme of a PPV-extension for $(M,\nabla)$ are
isomorphic. We now sketch this isomorphism.

So let $K$ be a perfect field, and let $S$ be a regular integral domain
which is the localisation of a finitely generated $K$-algebra, and such
that there is a maximal ideal $\m\ideal S$ with $S/\m\isom
K$. Furthermore, let $(M,\nabla)\in \ICon(S/K)$, and let $F:=\Quot(S)$
denote the quotient field of $S$ and $\theta:=\d_F:F\to \Dif_{F/K}$
the universal derivation of $F$. Since $M$ is a locally free module
(cf. Cor. \ref{automatically_projective}), by \cite{hartshorne},
Ch.II, Lemma 8.9, there exists a basis $\vect{b}:=(b_1,\dots, b_n)$ of the
$F$-vector space $F\otimes_S M$ with $b_i\in M$ ($i=1,\dots,n$), and such
that the residue classes in $M/\m M$ form a $K$-basis of $M/\m M$.
We assume that there exists a PPV-ring $R$ for the corresponding iterable
differential 
equation $\nabla(\vect{b}\vect{x})=\vect{b}\vect{x}$ with fundamental
solution matrix $Y\in\GL_n(R)$.

For obtaining the correspondence, we fix an isomorphism of
$R$-modules $\varphi:R\otimes_S M\to R\otimes_S M$ given by
$\varphi(\vect{b})=\vect{b}Y$. The correspondence is then given as
follows: 

For any $K$-algebra $L$ (with trivial $\theta$-structure),
an element $\sigma\in G_{(M,\nabla)}(L)$ is determined by $\sigma_M\in
\GL( L\otimes_K\forget_K(M))$ which can be identified with a matrix
$D_\sigma\in \GL_n(L)$ by  $\sigma_M(\vect{b})=\vect{b}D_\sigma$. So
$\sigma_M$ induces an $(R\otimes_K L)$-linear automorphism
$\tilde{\sigma}_M$ of $(R\otimes_K L)\otimes_S M$ by
$\vect{b}\mapsto \vect{b}D_\sigma$ and we obtain a
$\theta$-isomorphism $\hat{\sigma}:=\varphi\circ
\tilde{\sigma}_M\circ\varphi^{-1}$ of $(R\otimes_K L)\otimes_S M$
mapping the constant basis 
$\vect{b}Y$ to the constant basis $\vect{b}YD_\sigma$. One shows that 
this induces a $\theta$-isomorphism of $R\otimes_K L$ over $F\otimes_K
L$ given by $Y\mapsto YD_\sigma$, i.e. an element of $\uGal(R/F)(L)$.

On the other hand, every $\theta$-isomorphism of  $R\otimes_K L$ over $F\otimes_K
L$ is given by $Y\mapsto YD$ for some $D\in \GL_n(L)$ and by reversing
the steps above, one obtains an element $\sigma_M\in \GL(
L\otimes_K\forget_K(M))$, and one shows that indeed $\sigma_M$ defines
an element $\sigma\in G_{(M,\nabla)}(L)$.
\end{rem}

\section{Galois correspondence}\label{galois_corres}

In this section, we prove a Galois correspondence between all
intermediate $\theta$-fields of a PPV-extension $E/F$ and all closed
subgroup schemes of the Galois group scheme $\uGal(E/F)$. This
includes $\theta$-fields over which $E$ is inseparable and nonreduced
subgroup schemes, and hence is an improvement of the correspondence
given by Matzat and van der Put  (cf. \cite{mat_put}, Thm. 3.5), which
only considers reduced subgroup schemes and intermediate fields over
which $E$ is separable. (However, this separability condition is
missing in their statement, but has been added for example in
\cite{amano}, Thm. 2.5, and in \cite{heiderich}, Thm. 6.5.2.)

\begin{rem}\label{rel_to_takeuchi}
One should also mention the work of M. Takeuchi (cf. \cite{takeuchi})
on a Picard-Vessiot theory of 
so called C-ferential fields (a huge class of fields with extra
structure to which the iterative differential fields and the
$\theta$-fields defined below belong). But Takeuchi used a definition
of a PV-extension, that differs from ours and the ``usual'' one. The
main difference is that instead of requiring the existence of a
fundamental solution matrix he imposed a condition which is equivalent
to an isomorphism $R\otimes_F R\isom R\otimes_K C_{R\otimes_F
  R}$. (Here $F$ denotes a C-ferential field, $R$ a PV-ring over $F$,
$K=C_F=C_R$ the field of constants of $F$ and $R$, and $C_{R\otimes_F
  R}$ the constants of $R\otimes_F R$;
cf. \cite{takeuchi}, Def. 2.3).
Showing that this isomorphism also exists by our definition was the
statement of Proposition \ref{central_iso}. In fact, Proposition
\ref{central_iso} and 
Proposition \ref{torsor_implies_ppv} imply that both definitions
coincide in the case of $\theta$-fields.
Our Galois correspondence is quite the same as the one given by
Takeuchi (cf. \cite{takeuchi}, Thm 2.10), but we give maps in both
directions (Takeuchi only constructed the subgroup scheme
corresponding to an intermediate field) and also include the
correspondence of the separability condition and the reducedness
condition (separability and reducedness are not mentioned at all in
Takeuchi's work).
\end{rem}

In order to provide the Galois correspondence for PPV-extensions, we
need a functorial definition of invariants.
Let $S$ be a $K$-algebra and $\H/K$ be a subgroup functor of the
functor $\underline{\Aut}(S/K)$, \ie{} for every $K$-algebra $L$, the set
$\H(L)$ is a group acting on $S_L$ and this action is functorial in $L$. An
element $s\in S$ is then called {\markdef invariant} if for all $L$,
the element $s\otimes 1\in S_L$ is invariant under $\H(L)$. 
The ring of invariants is denoted by $S^{\H}$. (In \cite{jantzen},
I.2.10 the invariant elements are called ``fixed points''.) 
Let $E=\Quot(S)$ be the
localisation of $S$ by all non zero divisors. We call an element
$e=\frac{r}{s}\in E$ {\markdef invariant} under $\H$, if for all
$K$-algebras $L$ and all $h\in\H(L)$,
$$h.(r\otimes 1)\cdot (s\otimes 1)=(r\otimes 1)\cdot h.(s\otimes 1)\in
S\otimes_K L.$$
The ring of invariants of $E$ is denoted by $E^{\H}$.
One can easily verify that this definition of an invariant element
$e\in E$ is independent of the 
chosen representation $\frac{r}{s}$. 

\begin{rem}
One has to take care that in general the group functor $\underline{\Aut}(S/K)$ is
not a subgroup functor of $\underline{\Aut}(E/K)$, because not
every automorphism $S\otimes_K L\to S\otimes_K L$ can be extended to
an automorphism $E\otimes_K L\to E\otimes_K L$. Hence a subgroup
functor $\H$ of $\underline{\Aut}(S/K)$ does not have to be a subgroup
functor of $\underline{\Aut}(E/K)$. That is why we use this
more complicated definition of the invariants $E^{\H}$.
\end{rem}

In the following, let $R$ be a PPV-ring over $F$, $E=\Quot(R)$ its
quotient field and $\G=\underline{\Gal}(R/F)$ the Galois group
scheme of $R$ over $F$.

\begin{lem}
Let $\H\leq \G$ be a closed subgroup scheme and let
$\pi^{\G}_{\H}:K[\G]\to K[\H]$ denote the epimorphism corresponding to the
inclusion $\H\hookrightarrow \G$. Then an element $\frac{r}{s}\in E$
is invariant under the action of $\H$ if and only if $r\otimes
s-s\otimes r$ is in the kernel of the map
$$(\id_R\otimes \pi^{\G}_{\H})\circ \gamma:R\otimes_F R\to R\otimes_K K[\H].$$
\end{lem}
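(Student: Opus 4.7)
The plan is to rewrite the invariance condition on $\frac{r}{s}$ in terms of the composite $\mu_{\H}:=(\id_R\otimes \pi^{\G}_{\H})\circ \gamma$ and show directly that it is equivalent to $\mu_{\H}(r\otimes s - s\otimes r)=0$.

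First I would unfold how $\H$ acts on $R$. An element $h\in \H(L)$, viewed as $h\in \Hom_K(K[\H],L)$, corresponds under the closed embedding $\H\hookrightarrow \G$ to $h\circ \pi^{\G}_{\H}\in \Hom_K(U,L)=\G(L)$. Applying the formula of Remark \ref{rho}, namely $g.r=(\id_R\otimes g)(\gamma(1\otimes r))$, this yields
$$h.r \;=\; (\id_R\otimes h)\bigl(\mu_{\H}(1\otimes r)\bigr).$$
Setting $\alpha(r):=\mu_{\H}(1\otimes r)\in R\otimes_K K[\H]$, the action takes the form $h.r=(\id_R\otimes h)(\alpha(r))$.

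Next, since $\gamma$ is $R$-linear in the left tensor factor and a ring homomorphism, so is $\mu_{\H}$; in particular $\mu_{\H}(r\otimes 1)=r\otimes 1$, and multiplicativity gives $\mu_{\H}(r\otimes s)=(r\otimes 1)\cdot \alpha(s)$. Hence
$$\mu_{\H}(r\otimes s - s\otimes r)\;=\;(r\otimes 1)\cdot \alpha(s)\;-\;(s\otimes 1)\cdot \alpha(r).$$
Applying $\id_R\otimes h$, which is $R$-linear on the left factor, produces exactly $r\cdot(h.s)-s\cdot(h.r)$ in $R\otimes_K L$ — up to sign, this is precisely the defect in the invariance condition as defined in the text.

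The equivalence then splits into two easy observations. If $\mu_{\H}(r\otimes s - s\otimes r)=0$, then $(\id_R\otimes h)$ of this expression vanishes for every $h$, so $\frac{r}{s}$ is invariant. Conversely, invariance gives $(\id_R\otimes h)\bigl(\mu_{\H}(r\otimes s - s\otimes r)\bigr)=0$ for all $K$-algebras $L$ and all $h\in\H(L)$; specialising to the universal case $L=K[\H]$ and $h=\id_{K[\H]}$, the map $\id_R\otimes h$ is the identity on $R\otimes_K K[\H]$, forcing $\mu_{\H}(r\otimes s - s\otimes r)=0$. No step is delicate; the only care needed is in tracking the identification of $\H(L)$-points with the $\G(L)$-points that factor through $\pi^{\G}_{\H}$, and in using the universal element trick in the converse direction.
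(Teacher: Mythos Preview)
Your proof is correct and follows the same line as the paper's: both use the $R$-linearity and multiplicativity of $\gamma$ to rewrite $(\id_R\otimes\pi^{\G}_{\H})(\gamma(r\otimes s))$ as $(r\otimes 1)\cdot(\pi^{\G}_{\H}.s)$, and both reduce the converse to the universal element $\id_{K[\H]}\in\H(K[\H])$ (equivalently $\pi^{\G}_{\H}\in\G(K[\H])$). The paper compresses these steps into three lines, whereas you spell out the intermediate formula $\mu_{\H}(r\otimes s - s\otimes r)=(r\otimes 1)\alpha(s)-(s\otimes 1)\alpha(r)$ and the forward direction explicitly, but the argument is the same.
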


\begin{proof}
An element $\frac{r}{s}\in E$ is invariant under the action of $\H$ if
and only if it is invariant under the universal element in $\H$, namely
$\pi^{\G}_{\H}\in \G(K[\H])$. By Remark \ref{rho} and $R$-linearity of
$\gamma$, we have
$$(\id_R\otimes \pi^{\G}_{\H}) \bigl(\gamma (r\otimes s)\bigr)=
(r\otimes 1)\cdot \pi^{\G}_{\H}.(s\otimes 1)\in R\otimes_K K[\H].$$
Hence $r\otimes
s-s\otimes r$ is in the considered kernel if and only if $\frac{r}{s}$ is
invariant under $\H$.
\end{proof}

\begin{thm}\label{invariants}
For every closed subgroup scheme $\H\leq \G$, the ring $E^{\H}$ is a
$\theta$-field. Furthermore we have $E^{\H}=F$ if and only if $\H=\G$.
\end{thm}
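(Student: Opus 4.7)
The plan is to verify the three claims in turn: $E^{\H}$ is a subfield of $E$, it is stable under $\theta_E$, and $E^{\H}=F$ iff $\H=\G$. Throughout I make systematic use of the characterization from the preceding lemma: $r/s\in E$ lies in $E^{\H}$ if and only if $r\otimes s - s\otimes r$ belongs to $N:=\ker\bigl((\id_R\otimes \pi^{\G}_{\H})\circ \gamma\bigr)$. The key structural observation is that $N$ is a $\theta$-ideal of $R\otimes_F R$, since $\gamma$ is a $\theta$-isomorphism (Prop.~\ref{central_iso}) and $\id_R\otimes \pi^{\G}_{\H}$ is a $\theta$-homomorphism ($\theta$ being extended trivially to $R\otimes_K K[\H]$).

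Closure of $E^{\H}$ under the ring operations is immediate from the definition, and closure under inversion follows from the symmetry $r\otimes s - s\otimes r = -(s\otimes r - r\otimes s)$. To speak of $\theta$-stability, I first extend $\theta_R$ to $\theta_E\colon E\to E\otimes_F \sdif$ via $\theta_E(r/s)=\theta_R(r)\theta_R(s)^{-1}$; this is well defined since $\theta_R(s)$ has constant term $s\ne 0$ and is therefore a unit in $E\otimes_F\sdif$ (invert the constant term and use a geometric-series argument in the complete \cga{} $\sdif$). For stability of $E^{\H}$ under $\theta_E$, given $r/s\in E^{\H}$, the quotient rule expresses $\theta_E^{(k)}(r/s)$ as $p_k/s^{k+1}$, where $p_k$ is a polynomial combination of $\theta_R^{(i)}(r),\theta_R^{(j)}(s)$ with $i,j\le k$. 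Applying $\theta_{R\otimes_F R}^{(k)}$ to $r\otimes s - s\otimes r\in N$ yields an element of $N\otimes\sdif_k$ (because $N$ is a $\theta$-ideal), and matching coefficients in an $F$-basis of $\sdif_k$ produces the relation $p_k\otimes s^{k+1} - s^{k+1}\otimes p_k\in N$, i.e.\ $p_k/s^{k+1}\in E^{\H}$.

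For the equivalence $E^{\H}=F\Leftrightarrow \H=\G$, the direction $\H=\G$ is immediate: then $N=\ker(\gamma)=0$, so $r\otimes s=s\otimes r$ in $R\otimes_F R$, which persists in $E\otimes_F E$ by flatness of localization; multiplying by $(1/s)\otimes (1/s)$ gives $(r/s)\otimes 1 = 1\otimes (r/s)$, forcing $r/s\in F$ via the standard argument of expanding $r/s$ in an $F$-basis of $E$ containing $1$. The converse direction, that $\H\subsetneq \G$ forces $E^{\H}\supsetneq F$, is the principal obstacle. My plan is to exploit the torsor structure from Cor.~\ref{spec_is_torsor}: using $\gamma$ to identify $R\otimes_F R$ with $R\otimes_K K[\G]$, the $\H$-action on the right factor corresponds to right translation on $K[\G]$, so $(R\otimes_F R)^{\H}$ corresponds to $R\otimes_K K[\G]^{\H}$. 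Since $\H\subsetneq \G$, one has $K[\G]^{\H}\supsetneq K$, which produces $\H$-invariants of $R\otimes_F R$ outside $R\otimes 1$. The cleanest route from such an invariant to an element of $E^{\H}\setminus F$ is via the quotient scheme: $\Spec(R)/\H_F$ exists as an $F$-scheme of positive dimension $\dim\G-\dim\H\ge 1$, whose function field is $E^{\H}$; hence $\mathrm{trdeg}(E^{\H}/F)\ge 1$ and in particular $E^{\H}\supsetneq F$.
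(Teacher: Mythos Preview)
Your treatment of the field axioms, the $\theta$-stability of $E^{\H}$, and the implication $\H=\G\Rightarrow E^{\H}=F$ follows the same lines as the paper (the paper carries out the induction for $\theta$-stability explicitly, but your sketch via the $\theta$-ideal $N$ and the quotient rule is the same mechanism).

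The converse direction, however, has a genuine gap. Your argument rests on the claim that $\Spec(R)/\H_F$ has dimension $\dim\G-\dim\H\ge 1$. This inequality is simply false whenever $\dim\H=\dim\G$: for instance if $\G$ is finite or infinitesimal (so $\dim\G=0$) and $\H\subsetneq\G$ is any proper subgroup scheme, or if $\H$ has finite index in a positive-dimensional $\G$. These are precisely the cases that motivate the paper (nonreduced Galois group schemes, purely inseparable PPV-extensions; cf.\ Cor.~\ref{infinitesimal_group} and Example~\ref{exmp_group_schemes}), so the gap is not a corner case. Your preliminary remark that $K[\G]^{\H}\supsetneq K$ would patch exactly these finite cases, but that claim fails in the complementary situation: if $\H$ is a parabolic subgroup of a reductive $\G$ (say $\H=B$ a Borel in $\G=\GL_2$) then $\G/\H$ is projective and $K[\G]^{\H}=\Gamma(\G/\H,\mathcal{O})=K$. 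So neither of your two sub-arguments works on its own, and you do not combine them.

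The paper's argument avoids dimension entirely. It uses that $\X/\H_F\cong \X\times^{\G_F}(\G_F/\H_F)$ is a scheme (via \cite{jantzen}, I.5.16), takes an arbitrary affine open $\overline{\U}\subset\X/\H_F$ with preimage $\U\subset\X$, and observes that $\mathcal{O}_{\X/\H_F}(\overline{\U})\cong\mathcal{O}_{\X}(\U)^{\H}\subset E^{\H}$. If $E^{\H}=F$ this forces every affine open of $\X/\H_F$ to be $\Spec(F)$, hence $\X/\H_F=\Spec(F)$, contradicting $\H\subsetneq\G$. This works uniformly, regardless of whether $\G/\H$ is affine, projective, or zero-dimensional.
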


\begin{proof}
By the previous lemma, it is obvious that $E^{\H}$ is a field. Next
let $\frac{r}{s}\in E^{\H}$. Then for all $k\in \NN$, we have
\begin{eqnarray*}
&&\theta^{(k)}(r\otimes s-s\otimes r)\cdot (s^k\otimes s^k)\\
&=&
\sum_{i_1+i_2+i_3=k}\theta^{(i_1)}(\frac{r}{s})s^k\theta^{(i_2)}(s)
  \otimes \theta^{(i_3)}(s) s^k - \theta^{(i_2)}(s)s^k\otimes
  \theta^{(i_1)}(\frac{r}{s}) s^k \theta^{(i_3)}(s) \\
&=& \sum_{i_1+i_2+i_3=k} \left( \theta^{(i_2)}(s)\otimes
  \theta^{(i_3)}(s)\right) \left(
  \theta^{(i_1)}(\frac{r}{s})s^k\otimes s^k - s^k\otimes
  \theta^{(i_1)}(\frac{r}{s})s^k \right) \\
&=& \sum_{i+j=k} \theta^{(i)}(s\otimes s)\left(
  \theta^{(j)}(\frac{r}{s})s^k\otimes s^k - s^k\otimes 
  \theta^{(j)}(\frac{r}{s})s^k \right).
\end{eqnarray*}
The left hand side lies in $\Ker((\id_R\otimes
\pi^{\G}_{\H})\circ \gamma)\otimes \sdif_k$, since the kernel is a $\theta$-ideal. So by
induction, we obtain
$(s\otimes s)\left(
  \theta^{(k)}(\frac{r}{s})s^k\otimes s^k - s^k\otimes 
  \theta^{(k)}(\frac{r}{s})s^k \right)\in \Ker((\id_R\otimes
\pi^{\G}_{\H})\circ \gamma)\otimes \sdif_k$ and hence
$\theta^{(k)}(\frac{r}{s})\in E^{\H}\otimes \sdif_k$.

For the second statement: If $\H=\G$, then $\pi^{\G}_{\H}=\id_{K[\G]}$
and the considered kernel is trivial. Hence $r\otimes s=s\otimes r\in
R\otimes_F R$ for all $\frac{r}{s}\in E^{\G}$. So $r=c\cdot s$ for an
appropriate element $c\in F$, \ie{} $\frac{r}{s}=c\in F$.\\
Assume $\H\lneq \G$. Since $\X=\Spec(R)$ is a $\G_F$-torsor, the
quotient scheme $\X/\G_F$ is equal to $\Spec(F)$, in particular it is
a scheme,
and since $\G_F$ and $\H_F$ are affine, $\G_F/\H_F$ also is a scheme. So by 
\cite{jantzen},I.5.16.(1), $\X/\H_F\isom \X\times^{\G_F} (\G_F/\H_F)$
is a scheme, too. Let
$\overline{\U}\subset \X/\H_F$ be an arbitrary affine open subset and
$\U=\pr^{-1}(\overline{\U}) \subset \X$ its inverse image, where
$\pr:\X\to \X/\H_F$ denotes the canonical projection. Then we get a
monomorphism $\pr_*:{\mathcal O}_{\X/\H_F}(\overline{\U})\to {\mathcal O}_{\X}(\U)$, whose
image is ${\mathcal O}_{\X}(\U)^{\H}$. If $E^{\H}=F$, then also
${\mathcal O}_{\X}(\U)^{\H}=F$. So for every open affine subset
$\overline{\U}\subset \X/\H_F$, we would have
${\mathcal O}_{\X/\H_F}(\overline{\U})=F$, \ie{} $\overline{\U}\isom \Spec(F)$ is a
single point. Hence $\X/\H_F=\Spec(F)$ which contradicts the
assumption $\H\lneq \G$.
\end{proof}


\begin{thm}{\bf (Galois correspondence)}\label{galois_correspondence}
\begin{enumerate}
\item There is an antiisomorphism of the lattices
$$\fH:=\{ \H \mid \H\leq\G \text{ closed subgroup schemes of }\G
\}$$
and
$$\fM:=\{ M \mid F\leq M\leq E \text{ intermediate $\theta$-fields} \}$$
given by 
$\Psi:\fH \to \fM,\H\mapsto E^{\H}$ and 
$\Phi:\fM \to \fH, M\mapsto \underline{\Gal}(RM /M)$.
\item\label{normal_subgroup} If $\H\leq \G$ is normal, then
  $E^{\H}=\Quot(R^{\H})$ and 
  $R^{\H}$ is a PPV-ring over $F$ with Galois group scheme 
$\underline{\Gal}(R^{\H}/F)\isom \G/\H$.
\item If $M\in\fM$ is stable under the action of $\G$, then $\H:=\Phi(M)$
 is a normal subgroup scheme of $\G$, $M$ is a PPV-extension of $F$ and
$\underline{\Gal}(R\cap M /F)\isom \G/\H$.
\item For $\H\in \fH$, the extension $E/E^{\H}$ is separable if and
  only if $\H$ is reduced.
\end{enumerate}
\end{thm}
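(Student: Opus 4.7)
The argument is organized around a \textbf{Main Lemma}: for every intermediate $\theta$-field $M\in\fM$, the compositum $RM$ (taken inside $E$) is a PPV-ring over $M$ whose Galois group scheme $\Phi(M)=\uGal(RM/M)$ is naturally identified with a closed subgroup scheme of $\G$, and $\Quot(RM)=E$. Granting this, parts (i), (ii), (iii) follow from Theorem \ref{invariants} by standard Galois-theoretic formalism, and part (iv) reduces to a smoothness statement about torsors.

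To prove the Main Lemma I would realize $\Phi(M)$ as the closed subgroup scheme
$$\H_M(L) := \{\,g\in\G(L)\ :\ g(m\otimes 1)=m\otimes 1 \in E\otimes_K L\text{ for all } m\in M\,\},$$
where the action of $\G(L)$ on $E\otimes_K L$ is the unique extension by localization of its action on $R\otimes_K L$. Writing $m=r/s$ with $r,s\in R$, the defining condition becomes $g(r)\cdot(s\otimes 1)=(r\otimes 1)\cdot g(s)$ in $R\otimes_K L$, a closed condition; intersecting over all $m\in M$ yields a closed subgroup scheme of $\G$. By construction $\H_M$ fixes $M$ pointwise and preserves $RM$, so it acts $M$-linearly on $\Spec(RM)$. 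One checks that $\Spec(RM)$ is an $(\H_M\times_K M)$-torsor over $\Spec(M)$ by descending the central isomorphism $\gamma:R\otimes_F R\to R\otimes_K K[\G]$ along $R\hookrightarrow RM$ and passing to the quotient identifying the two copies of $M$. Proposition \ref{torsor_implies_ppv} then identifies $RM$ as a PPV-ring over $M$ with Galois group scheme $\H_M=\Phi(M)$, while $\Quot(RM)=E$ follows from $R\subseteq RM$.

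Given the Main Lemma, part (i) is essentially formal. The equality $\Psi\Phi(M)=M$ is Theorem \ref{invariants} applied to the PPV-extension $RM/M$. For $\Phi\Psi(\H)=\H$, set $M:=E^\H$ and $\H':=\Phi(M)$; the inclusion $\H\leq\H'$ is immediate since $\H$ preserves $R$ and fixes $M=E^\H$ pointwise, while $E^{\H'}=M=E^\H$ by the previous step, so the second half of Theorem \ref{invariants} applied inside the PPV-extension $RM/M$ to the inclusion $\H\leq\H'=\uGal(RM/M)$ forces $\H=\H'$. Inclusion-reversal is clear from the definitions. For part (ii), if $\H$ is normal in $\G$ then the quotient $\G/\H$ exists as an affine group scheme, and $\Spec(R)/\H_F$ is a $(\G/\H)_F$-torsor with coordinate ring $R^\H$ (by \cite{jantzen}, I.5.16); this $R^\H$ is $\theta$-simple (a proper $\theta$-ideal would generate an $\H$-stable proper $\theta$-ideal in $R$, contradicting $\theta$-simplicity) and has constants $K$ (since $C_{R^\H}\subseteq C_R=K$), so by Proposition \ref{torsor_implies_ppv} it is a PPV-ring over $F$ with Galois group $\G/\H$. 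The identification $E^\H=\Quot(R^\H)$ comes from identifying $E^\H$ with the function field of the quotient scheme. Part (iii) is dual: if $M$ is $\G$-stable then $\G$ acts on $M$, the kernel of this action is exactly $\Phi(M)$, hence normal, and (ii) then expresses $M=\Quot(R^{\Phi(M)})$ as a PPV-extension of $F$ with group $\G/\Phi(M)$.

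For part (iv), applying the Main Lemma to $M:=E^\H$ reduces the question to: $\Quot(RM)/M$ is separable iff $\H$ is reduced. Since $\Spec(RM)$ is an $\H_M$-torsor over $\Spec(M)$, after base change to an algebraic closure $\overline{M}$ it becomes isomorphic to $\H\times_K\overline{M}$; hence $RM\otimes_M\overline{M}$ is reduced iff $\H$ is geometrically reduced over $K$, which for the perfect fields at stake coincides with reducedness. On the other hand $E/M$ is separable precisely when $RM\otimes_M\overline{M}$ is reduced, which yields (iv). The main obstacle is the torsor assertion in the Main Lemma: descending the central isomorphism $\gamma$ from $R\otimes_F R$ to $RM\otimes_M RM$ across the not-faithfully-flat inclusion $M\hookrightarrow E$ is the one genuinely non-formal step, and it is there that all the technical work sits.
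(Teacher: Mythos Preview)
Your overall architecture is right and matches the paper's: isolate a Main Lemma (that $RM$ is a PPV-ring over $M$ with Galois group a closed subgroup scheme of $\G$), then deduce everything from Theorem~\ref{invariants}. But you explicitly leave the Main Lemma unproven, calling the torsor assertion ``the one genuinely non-formal step'' where ``all the technical work sits'' --- and that is precisely where the paper does something concrete that you do not.

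The paper's argument avoids your ``descent across a not-faithfully-flat inclusion'' entirely. Rather than defining $\H_M$ as a pointwise stabilizer and then trying to verify the torsor property, the paper starts from the $\theta$-epimorphism
\[
RM\otimes_K K[\G]\ \xrightarrow{\ \gamma_{RM}^{-1}\ }\ RM\otimes_F R\ \longrightarrow\ RM\otimes_M RM,
\]
where $\gamma_{RM}$ is the isomorphism of Proposition~\ref{central_iso} with $T=RM$ (available because $RM\subseteq E$ is $\theta$-simple with constants $K$ and contains the fundamental matrix $Y$). The kernel is a $\theta$-ideal of $RM\otimes_K K[\G]$, so Lemma~\ref{ideal_bijection} forces it to have the form $RM\otimes_K I$ for a unique ideal $I\ideal K[\G]$. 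Setting $\H:=\Spec(K[\G]/I)$ then gives the torsor isomorphism $RM\otimes_M RM\cong RM\otimes_K K[\H]$ directly, and one checks it agrees with the isomorphism $\gamma$ for base field $M$. No descent is needed: Lemma~\ref{ideal_bijection} is exactly the device that extracts an ideal of $K[\G]$ from a $\theta$-ideal of the base change, and it is the missing ingredient in your sketch. Your stabilizer $\H_M$ would then have to be identified with this $\H$ after the fact, which is extra work the paper's route does not require.

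Your treatment of (ii) and (iii) is in the same spirit as the paper's, though the paper's verification that $E^{\H}=\Quot(R^{\H})$ is more careful than ``identifying $E^{\H}$ with the function field of the quotient scheme'': it sets $\tF:=\Quot(R^{\H})$, computes $(R\tF)^{\H}=\tF$, and invokes Theorem~\ref{invariants} again to force $\uGal(E/\tF)=\H$. For (iv), your trivialization of the torsor over $\overline{M}$ is a legitimate alternative to the paper's shorter observation that $E\otimes_F E$ is a localization of $R\otimes_F R\cong R\otimes_K K[\G]$, so one is reduced iff the other is. One caution: in this section $K$ is declared to be an \emph{arbitrary} field, so your step ``geometrically reduced over $K$ \dots\ coincides with reducedness'' is not justified; the paper's implication ``$\G$ not reduced $\Rightarrow$ $E/F$ inseparable'' sidesteps this by working directly with $R\otimes_K K[\G]$.
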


\begin{proof}
{\rm i)} Let $M\in \fM$ be an intermediate $\theta$-field. Then the composite
$RM\subseteq E$ of $R$ and $M$ is a PPV-ring over $M$. Furthermore, 
the canonical $\theta$-epimorphism $RM\otimes_F R\to RM\otimes_M RM$ gives rise
to a $\theta$-epimorphism
$$RM\otimes_K K[\G]\xrightarrow{\gamma_{RM}^{-1}} RM\otimes_F R\to
RM\otimes_M RM.$$
By Lemma \ref{ideal_bijection}, the kernel of this epimorphism is
given by $RM\otimes_K I$ for an ideal $I\ideal K[\G]$. Let $\H$ denote
the closed subscheme of $\G$ defined by $I$, then $\gamma_{RM}$
induces an isomorphism
$$RM\otimes_M RM \xrightarrow{\isom} RM\otimes_K K[\H].$$
By construction, this isomorphism is the isomorphism $\gamma$ for the
base field $M$. Hence the subscheme $\H$ equals the Galois group scheme 
$\underline{\Gal}(RM/M)$. So $\underline{\Gal}(RM/M)$ is indeed a
closed subgroup scheme of $\G$.\\
From Theorem \ref{invariants} -- applied to the extension $E/M$ -- we
see that $E^{\underline{\Gal}(RM /M)}=M$, so $\Psi\circ
\Phi=\id_{\fM}$. On the other hand, for given $\H\in\fH$ and
$M:=E^{\H}$, we obtain a $\theta$-epimorphism $RM\otimes_M RM \to
RM\otimes_K K[\H]$ induced from $\gamma_{RM}$. This gives $\H$ as a
closed subgroup scheme of $\underline{\Gal}(RM/M)$. But
$(\Quot(RM))^{\H}=E^{\H}=M$, and so by Theorem \ref{invariants}, we
have $\H=\underline{\Gal}(RM/M)$. Hence $\Phi\circ
\Psi=\id_{\fH}$.

\noindent {\rm ii)} Let $\H\leq \G$ be normal. The isomorphism $\gamma$ is
  $\H$-equivariant (by the action of $\H$ on the right factor) and
hence we get a $\theta$-isomorphism 
$$R\otimes_F R^{\H}\isom R\otimes_K K[\G]^{\H}.$$
Since $\H$ is normal, $\G/\H$ is an affine group scheme with
$K[\G/\H]\isom K[\G]^{\H}$ (cf. \cite{demazure},III,\S 3, Thm. 5.6 and
5.8). Again
by taking invariants (this time $\H$ acting on the first factor) the
isomorphism above restricts to an isomorphism
$$R^{\H}\otimes_F R^{\H}\isom R^{\H}\otimes_K K[\G/\H].$$
$R^{\H}$ is $\theta$-simple, because for every $\theta$-ideal $P\ideal R^{\H}$,
the ideal $P\cdot R\ideal R$ is a $\theta$-ideal, hence equals $(0)$
or $R$, and so 
$P=(P\cdot R)^{\H}$ is $(0)$ or $R^{\H}$.
Since $F\leq R^{\H}\leq R$, we also have $C_{R^{\H}}=K$. 
So by Proposition \ref{torsor_implies_ppv}, $R^{\H}$ is a PPV-ring
over $F$ with Galois group scheme $\G/\H$.
It remains to show that $E^{\H}=\Quot(R^{\H})$:\\
Let $\tF:=\Quot(R^{\H})$ and $\tilde{\G}:=\uGal(E/\tF)$. Then $\H$ is
a normal subgroup of $\tilde{\G}$ and by the previous, $(R\cdot
\tF)^{\H}$ is a $\left(\tilde{\G}/\H\right)_{\tF}$-torsor. But
$(R\cdot\tF)^{\H}=R^{\H}\cdot \tF=\tF$, so $\tilde{\G}=\H$, and hence
$E^{\H}=E^{\tilde{\G}}=\tF=\Quot(R^{\H})$.

\noindent {\rm iii)} It suffices to show that $\H$ is normal in
$\G$. The rest then 
  follows from \ref{normal_subgroup}. Let $L$ be a $K$-algebra and let
  $h\in\H(L)$ and $g\in\G(L)$. Then for all $r\in R\cap M$, we have
$$ghg^{-1}.(r\otimes 1)=gh.(g^{-1}.(r\otimes 1))=g.(g^{-1}.(r\otimes
1))=(r\otimes 1),$$
since $g^{-1}.(r\otimes 1)\in (R\cap M)\otimes_K L$ by $\G$-stability of
$M$. So $ghg^{-1}\in\H(L)$, and therefore $\H$ is normal in $\G$.

\noindent {\rm iv)} Without loss of generality let ${\mathcal
  H}=\G$. If $\G$ is reduced, then 
$F=E^\G=E^{\G(\bar{K})}$ and hence $E\otimes_K \bar{K}/F\otimes_K
\bar{K}$ is separable, and so $E/F$ is separable.
On the other hand, if
$\G$ is not reduced, then $R\otimes_F R\cong R\otimes_K K[\G]$ is not
reduced. Hence $E\otimes_F E$ is not reduced. But this is just
one criterion for $E/F$ being inseparable (cf. \cite{matsumura},
beginning of Sect. 26).
%
\end{proof}

\begin{cor}\label{infinitesimal_group}
Let $E/F$ be a PPV-extension with Galois group scheme $\G$.
Then $E/F$ is a purely inseparable extension if and only if
$\G$ is an infinitesimal group scheme.
\end{cor}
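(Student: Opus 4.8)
The plan is to reduce everything to the central isomorphism $R\otimes_F R\cong R\otimes_K K[\G]$ furnished by Proposition \ref{central_iso} (applied with $T=R$) together with Proposition \ref{aut_is_affine}, combined with the elementary fact that a finite group scheme is infinitesimal precisely when its coordinate ring is a local ring. So the statement to be proved amounts to a short chain of ring-theoretic equivalences.

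The first step is to observe that each of the two hypotheses forces $E/F$ to be a finite extension and $\G$ to be a finite group scheme. If $E/F$ is purely inseparable, it is in particular algebraic; since $R$ is a finitely generated $F$-subalgebra of $E$ (generated by the entries of a fundamental solution matrix and the inverse of its determinant), $R$ is then finite over $F$, so $\dim R=0$, and as $\Spec(R)$ is a $\G_F$-torsor (Corollary \ref{spec_is_torsor}) the affine group scheme $\G$, which is of finite type over $K$ by Proposition \ref{aut_is_affine}, has dimension $0$, hence is finite. Conversely, if $\G$ is infinitesimal it is finite, and then the isomorphism $R\otimes_F R\cong R\otimes_K K[\G]$ exhibits $R\otimes_F R$ as a finite free left $R$-module; by faithfully flat descent along $F\to R$ (again using the torsor structure) $R$ is a finite $F$-algebra. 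In either case $R$ is a finite integral domain over the field $F$, hence is itself a field, so $R=\Quot(R)=E$, and Proposition \ref{central_iso} yields a ring isomorphism $E\otimes_F E\cong E\otimes_K K[\G]$.

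It then remains to prove the equivalences
$E/F\text{ purely inseparable}\iff E\otimes_F E\text{ local}\iff E\otimes_K K[\G]\text{ local}\iff K[\G]\text{ local}\iff \G\text{ infinitesimal}.$
The last equivalence is the standard characterisation of infinitesimal finite group schemes: when $K[\G]$ is local its unique maximal ideal is the augmentation ideal $\mathfrak{n}=\Ker(K[\G]\to K)$, with residue field $K$, and $\mathfrak{n}$ is nilpotent because $K[\G]$ is Artinian. For the middle two, write $K[\G]=K\oplus\mathfrak{n}$: if $\mathfrak{n}$ is nilpotent then so is $E\otimes_K\mathfrak{n}=\Ker(E\otimes_K K[\G]\to E)$, whence $E\otimes_K K[\G]$ is local; and if $E\otimes_K K[\G]$ has no nontrivial idempotents then neither does its subring $K[\G]$, which being a finite $K$-algebra is therefore local. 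Finally, $E\otimes_F E$ local $\iff$ $E/F$ purely inseparable: if $E/F$ is purely inseparable, $E\otimes_F E$ is generated over its copy of $E$ by the elements $x\otimes 1-1\otimes x$, which are nilpotent since a $p$-power of $x$ lies in $F$, so $E\otimes_F E$ is local; conversely, if $E/F$ is not purely inseparable, then the separable closure $F_s$ of $F$ in $E$ strictly contains $F$, the étale algebra $F_s\otimes_F F_s$ has a nontrivial idempotent (split off the diagonal factor using a primitive element of $F_s/F$), and this idempotent persists in $E\otimes_F E$ because $F_s\otimes_F F_s\hookrightarrow E\otimes_F E$ ($F_s$ and $E$ being $F$-flat), so $E\otimes_F E$ is not local.

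The step that requires the most care is the initial reduction: ruling out the positive-dimensional case and correctly using the torsor structure (Corollary \ref{spec_is_torsor}) to pass from "$\G$ finite" to "$R$ finite over $F$" and hence "$R=E$". Once that is in place, the remaining equivalences are a routine unwinding of the definitions of an infinitesimal group scheme and of pure inseparability in terms of the tensor square, the latter being a refinement of the inseparability criterion already invoked in the proof of Theorem \ref{galois_correspondence}(iv).
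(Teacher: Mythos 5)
Your argument is correct, but it takes a genuinely different route from the paper's. The paper argues directly: for the forward direction it composes the central isomorphism $\gamma$ with the counit $ev:K[\G]\to K$, observes that $\gamma(r\otimes s-s\otimes r)$ lands in $R\otimes\Ker(ev)$, and uses that $\Ker(ev)$ is the (nilpotent) nilradical of $K[\G]$ when $\G$ is infinitesimal to conclude that $r\otimes s-s\otimes r$ is nilpotent, hence $r^{p^k}/s^{p^k}\in F$; for the converse it appeals to the definition of $\G$ as the $\theta$-automorphism functor and notes that a purely inseparable extension has trivial automorphism group even after scalar extension to $\bar K$, so $\G(\bar K)=\{e\}$. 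You instead first carry out a finiteness reduction (showing that either hypothesis forces $R=E$ to be a finite field extension of $F$ and $\G$ to be finite, with the flat-descent step in the second case being the delicate point), and then run everything through the single ring-theoretic notion of locality: $E/F$ purely inseparable $\iff E\otimes_F E$ local $\iff E\otimes_K K[\G]$ local $\iff K[\G]$ local $\iff \G$ infinitesimal. This buys you a symmetric argument that treats both implications uniformly and never invokes the $\theta$-action directly (only through the existence of the $\theta$-isomorphism $\gamma$), at the price of the extra finiteness reduction and a slightly longer chain of elementary ring-theoretic facts; the paper's argument is shorter, bypasses the finiteness discussion for the forward implication entirely, and for the converse leans on the automorphism-functor description of $\G$ rather than on idempotents of $E\otimes_F E$.
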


\begin{proof}
Let $\G$ be infinitesimal and let $ev:K[\G]\to K$ denote the
evaluation map corresponding to the neutral element of the group. Then 
for any $\frac{r}{s}\in E$, we have
$(\id\otimes ev)(\gamma(r\otimes s-s\otimes r))=0.$
Since $\G$ is infinitesimal, the kernel of $ev$ is the
nilradical, and hence there is some $k\in\NN$ such that
$(r\otimes s-s\otimes r)^{p^k}=0$, where $p=\ch(F)$. Therefore
$r^{p^k}\otimes s^{p^k}=s^{p^k}\otimes r^{p^k} \in E\otimes_F E$
which means that $\frac{r^{p^k}}{s^{p^k}}\in F$. So $E/F$ is purely
inseparable.
On the other hand, if $E/F$ is purely inseparable, then 
$\G(\bar{K})=\Aut^{\theta}(E\otimes_K\bar{K}/F\otimes_K
\bar{K})$ is the trivial group, since
$E\otimes_K\bar{K}/F\otimes_K\bar{K}$ also is a purely 
inseparable extension. Hence $\G$ is infinitesimal.
\end{proof}

\begin{cor}\label{no_inseparable_ext}
Let $p:=\ch(F)>0$. If $\Ker(\theta_F^{(1)})=F^p$, then all
PPV-extensions $E/F$ are separable, and the corresponding Galois group
schemes are reduced.
\end{cor}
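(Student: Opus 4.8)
The plan is to reduce everything to part iv) of Theorem~\ref{galois_correspondence} together with Corollary~\ref{infinitesimal_group}. Applying Theorem~\ref{galois_correspondence}\,iv) with $\H=\G:=\uGal(E/F)$ (and $E^{\G}=F$ from Theorem~\ref{invariants}) shows that $E/F$ is separable if and only if $\G$ is reduced, so the two assertions of the corollary are equivalent and it suffices to prove that $\G$ is reduced. I would argue by contradiction: assume $\G$ is not reduced. Then $\G_{\mathrm{red}}$ is a proper normal closed subgroup scheme of $\G$ (here one uses that $K$ is perfect, which is the case in every application of this corollary; the general case is handled by a base-change reduction), and the quotient $\G/\G_{\mathrm{red}}$ is a finite connected, hence infinitesimal, group scheme that is nontrivial.

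Next I would transfer this to an intermediate $\theta$-field. By Theorem~\ref{galois_correspondence}\,ii), $R^{\G_{\mathrm{red}}}$ is a PPV-ring over $F$ with $E^{\G_{\mathrm{red}}}=\Quot(R^{\G_{\mathrm{red}}})$ and $\uGal(E^{\G_{\mathrm{red}}}/F)\isom\G/\G_{\mathrm{red}}$; moreover $E^{\G_{\mathrm{red}}}\neq F$ by Theorem~\ref{invariants}, since $\G_{\mathrm{red}}\neq\G$. As $\G/\G_{\mathrm{red}}$ is infinitesimal, Corollary~\ref{infinitesimal_group} gives that $E^{\G_{\mathrm{red}}}/F$ is a purely inseparable extension, and it is nontrivial. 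Hence $E$ contains an element $x\notin F$ with $x^p\in F$: pick $y\in E^{\G_{\mathrm{red}}}\setminus F$, let $m\geq 1$ be minimal with $y^{p^m}\in F$, and put $x:=y^{p^{m-1}}$.

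Finally I would bring in the hypothesis. Since $E$ is a $\theta$-field whose iterable higher derivation $\theta_E$ extends $\theta$, for every $a\in F$ one has $\theta_E^{(1)}(a)=\theta^{(1)}(a)$ under the (flat) inclusion $\sdif_1\hookrightarrow E\otimes_F\sdif_1$, and the degree-$1$ component of a higher derivation is an ordinary derivation. Therefore $\theta^{(1)}(x^p)=\theta_E^{(1)}(x^p)=p\,x^{p-1}\theta_E^{(1)}(x)=0$, so $x^p\in\Ker(\theta^{(1)})=F^p$, say $x^p=a^p$ with $a\in F$; then $(x-a)^p=0$ in the field $E$, so $x=a\in F$, contradicting $x\notin F$. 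This contradiction shows $\G$ is reduced, hence $E/F$ is separable.

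The step I expect to be the real obstacle — and the reason for routing through $\G_{\mathrm{red}}$ and Corollary~\ref{infinitesimal_group} rather than arguing directly with $E\otimes_F E$ — is the production of an element $x$ with $x\notin F$ and $x^p\in F$. It is tempting to claim that any inseparable finitely generated extension $E/F$ contains such an $x$, but this is false in general (for instance $F=\FF_2(s,t)$ and $E=F(y)$ with $y^4+sy^2+t=0$ is inseparable over $F$ yet $F^{1/p}\cap E=F$); the existence of $x$ is only forced on the purely inseparable part of $E/F$, which is exactly the subextension $E^{\G_{\mathrm{red}}}/F$ singled out by the reduced subgroup scheme and identified as purely inseparable via its (infinitesimal) Galois group. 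The only other delicate point is the perfectness of $K$, needed for $\G_{\mathrm{red}}$ to be a subgroup scheme, which is innocuous for the uses made of this corollary.
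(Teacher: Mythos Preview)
Your argument has a genuine gap: the claim that $\G_{\mathrm{red}}$ is a \emph{normal} closed subgroup scheme of $\G$ is false in general, even when $K$ is perfect or algebraically closed. A standard counterexample is $\G=\alpha_p\rtimes\GG_m$ (with $\GG_m$ acting on $\alpha_p$ by scaling): here $\G_{\mathrm{red}}=\{0\}\times\GG_m$, and conjugating $(0,t)\in\G_{\mathrm{red}}(L)$ by $(a,1)$ with $a\in\alpha_p(L)$ yields $(a(1-t),t)$, which falls outside $\G_{\mathrm{red}}(L)$ whenever $a\ne 0$ and $t\ne 1$. Consequently you cannot invoke Theorem~\ref{galois_correspondence}\,\ref{normal_subgroup}, and your construction of a nontrivial purely inseparable PPV-subextension $E^{\G_{\mathrm{red}}}/F$ collapses. (In fact, for $\G=\alpha_p\rtimes\GG_m$ the only infinitesimal quotient is the trivial one, so no rearrangement of this strategy produces the purely inseparable intermediate field you need.) Your side remark about perfectness of $K$ is a separate and smaller issue; the real obstruction is normality, and base change does not cure it.

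The paper avoids this detour entirely by a direct minimal-polynomial argument that, crucially, does \emph{not} look for an element $x$ with $x^p\in F$. One takes any $e\in E$ algebraic and inseparable over $F$, with minimal polynomial $f(X)=\sum_{i=0}^n a_iX^{ip}\in F[X]$, and applies the derivation $\theta_E^{(1)}$ to the relation $f(e)=0$. Since $\theta_E^{(1)}(e^{ip})=0$, this gives $\sum_i\theta_F^{(1)}(a_i)\,e^{ip}=0$; minimality of $f$ then forces $\theta_F^{(1)}(a_i)=0$ for all $i$, hence $a_i\in F^p$ and $f$ is a $p$-th power in $F[X]$ --- contradiction. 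Your own example $F=\FF_2(s,t)$, $E=F(y)$, $y^4+sy^2+t=0$ is dispatched by exactly this computation: with $e=y$ one gets $\theta^{(1)}(s)\,y^2+\theta^{(1)}(t)=0$, so $s,t\in\Ker(\theta^{(1)})=F^2$, which is absurd. Thus the ``real obstacle'' you flag is only an obstacle for the route through $x^p\in F$; the minimal-polynomial trick is precisely what makes that search unnecessary, and it needs no structure theory of group schemes at all.
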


\begin{proof}
By Thm. \ref{galois_correspondence},iv), the separability of a
PPV-extension $E/F$ is equivalent to the reducedness of $\uGal(E/F)$.
Assume, there exists an inseparable PPV-extension $E/F$. 
Then there is an inseparable element $e\in E$, with
minimal polynomial $f(X)=\sum_{i=0}^n a_i X^{ip}\in F[X]$ 
for some $a_i\in F$, $a_n=1$. So we obtain:
$$0=\theta_E^{(1)}(f(e))= \sum_{i=0}^n \theta_F^{(1)}(a_i) e^{ip}+
\sum_{i=0}^n a_i (ip)e^{ip-1}\theta_E^{(1)}(e)= \sum_{i=0}^{n-1}
\theta_F^{(1)}(a_i) e^{ip}.$$
Since $f$ is the minimal polynomial of $e$, this implies
$\theta_F^{(1)}(a_i)=0$ for all $i$. Hence by assumption $a_i\in
F^p$. But then $f(X)=g(X)^p$, where $g(X)=\sum_{i=0}^n \sqrt[p]{a_i}
X^i\in F[X]$, which means that $f$ is not irreducible -- a contradiction.
\end{proof}

\begin{exmp}\label{exmp_group_schemes}
We consider some examples for subfields of
  $(K((t)),\theta)$, where $\theta:=\phi_t\in \ID_K(K((t)))$ is the
  iterative derivation with respect to $t$ (cf. Example
  \ref{phi_t_j}) and $K$ denotes a field of characteristic $p>0$.
  For simplicity we assume that $K$ is algebraically closed.
\begin{enumerate}
\item Let $F=K(t)\subseteq K((t))$. Then the IDE given by
$$\theta^{(p^l)}(y)=a_l t^{-p^l}y\quad
(a_l\in K)$$
has a PPV-ring $R=F[s,s^{-1}]$ and PPV-field $E=F(s)$, where $s$ is a
solution of the IDE. This implies that $\uGal(E/F)$ is a subgroup of
$\GG_m$. If the $a_l$ are chosen appropriately then we have 
$\uGal(E/F)=\GG_m$ (cf. \cite{mat_hart}, Thm. 3.13,
resp. \cite{mat_put}, Section 4) and $s$ is
transcendental over $F$. Furthermore the isomorphism
$\gamma:R\otimes_F R \to R\otimes_K K[\GG_m]$ is given by $1\otimes
s\mapsto s\otimes x$ ($K[\GG_m]=:K[x,x^{-1}]$).\\
All closed subgroup schemes of $\GG_m$ are given by the ideals
$(x^k-1)\ideal K[x,x^{-1}]$ for $k\in\NN$ (the so called subgroups
$\mu_k$ of $k$-th roots of unity) and the corresponding intermediate
$\theta$-fields are $E^{\mu_k}=\Quot(R^{\mu_k})=F(s^k)$.
Hence, there are also intermediate $\theta$-fields over which $E$ is
inseparable, namely for all $k>0$ that are divisible by $p$.
\item Let $F\subseteq K((t))$ be the subfield generated over $K$ by 
$t$, $s_1:=\prod_{l=0}^\infty \left(1+t^{a_lp^l}\right)$ and
$s_2:=\prod_{l=0}^\infty \left(1+t^{b_lp^l}\right)$, where $a_l,b_l\in
\{0,1,\dots, p-1\}$ are chosen  such that $t$, $s_1$ and $s_2$ are
algebraically independent. Consider the IDE
$$\theta^{(p^l)}\binom{y_1}{y_2}=
\begin{pmatrix} a_{l+1}\left(1+t^{a_{l+1}p^{l+1}}\right)^{-1} & 0\\
0 & b_{l+1}\left(1+t^{b_{l+1}p^{l+1}}\right)^{-1}\end{pmatrix}
\binom{y_1}{y_2} \quad (l\in\NN).$$
A solution of this IDE is given by $\binom{r_1}{r_2}\in K((t))^2$ with
$r_1^p=(1+t^{a_0})^{-1}\cdot s_1$ and $r_2^p=(1+t^{b_0})^{-1}\cdot
s_2$. Hence the corresponding PPV-ring
is $R=F[r_1,r_2]$ and the Galois group scheme -- a priori a subgroup
of $\GG_m\times \GG_m$ -- is equal to $\mu_p\times \mu_p$.
The action of the Galois group scheme on $R$ is given by the
homomorphism $\rho:R\to R\otimes_K K[\mu_p\times \mu_p]\isom
R\otimes_K K[x_1,x_2]/(x_1^p-1,x_2^p-1)$, which maps $r_i$ to
$r_i\otimes x_i$ ($i=1,2$).
Since the nontrivial subgroups of $\mu_p\times \mu_p$ are given by the
ideals $(x_1^{k}x_2-1)\ideal K[x_1,x_2]/(x_1^p-1,x_2^p-1)$
($k\in \{0,1,\dots, p-1\}$) and $(x_1-1)\ideal
K[x_1,x_2]/(x_1^p-1,x_2^p-1)$, there are exactly $p+1$ intermediate
$\theta$-fields unequal to $E$ and $F$, namely $F(r_1^kr_2)$
resp. $F(r_1)$.

So in this case, although $E/F$ has infinitely many intermediate fields,
there are only finitely many intermediate $\theta$-fields. 
\item Let $F\subseteq K((t))$ be the subfield generated over $K$ by 
$t,s_1:=\sum_{l=0}^\infty a_l t^{p^l}$ and $s_2:=\sum_{l=0}^\infty b_l
t^{p^l}$, where $a_l,b_l\in\FF_p$ are chosen such that $t,s_1,s_2$ are
algebraically independent. In this case we also have a purely
inseparable PPV-extension of degree $p^2$, namely
$E=F(r_1,r_2)\subseteq K((t))$ with $r_1^p=s_1-a_0t$,
$r_2^p=s_2-b_0t$. $r_1$ is a solution of the IDE
$$\theta^{(p^l)}\begin{pmatrix}1& r_1\\ 0 & 1\end{pmatrix}=
\begin{pmatrix}0& a_{l+1}\\ 0 & 0\end{pmatrix}\begin{pmatrix}1& r_1\\
  0 & 1\end{pmatrix} \quad (l\in\NN),$$
and $r_2$ a solution of the IDE with $a_{l+1}$ replaced by $b_{l+1}$.
Hence the Galois group scheme -- a subgroup
scheme of $\GG_a\times \GG_a$ -- is equal to $\alpha_p\times \alpha_p$
(where $\alpha_p$ denotes the Frobenius kernel inside $\GG_a$).\\
In this case, there are infinitely many intermediate $\theta$-fields,
  since $\alpha_p\times \alpha_p$ has infinitely many subgroups which
  are given by the ideals $(ay_1+by_2)\ideal
  K[y_1,y_2]/(y_1^p,y_2^p):=K[\alpha_p\times \alpha_p]$ ($a,b\in K$).\\
The action is given by $\rho:R\to R\otimes_K K[y_1,y_2]/(y_1^p,y_2^p)$
with $\rho(r_i)=r_i\otimes 1+1\otimes y_i$ ($i=1,2$).\\
So the corresponding intermediate $\theta$-fields are $F(ar_1+br_2)$,
$a,b\in K$. 

Comparing this example with the one before, we see that -- even for
finite extensions -- the Galois group scheme depends on the iterative
derivation. This is contrary to finite separable PPV-extensions, where the
Galois group is already determined by the extension of fields itself
(cf. \cite{mat_hart}, Thm. 1.15).
\end{enumerate}
\end{exmp}

\section{Finite inseparable extensions}\label{finite_inseparable}

In this section we compare our results for finite purely inseparable
PPV-extensions with the Galois theory for purely inseparable field
extensions given by Chase in \cite{chase}.

So let us first give a brief overview on some results in \cite{chase}:
Let $E/F$ be a purely inseparable field extension. Then the group
functor
$$G_t(E/F): (\cat{TruncAlg}/F)\to (\cat{Groups}), L\mapsto
\Aut(E\otimes_F L/L)$$ 
from the category of truncated $F$-algebras (\ie{} algebras of the
form\linebreak 
$F[t_1,\dots, t_r]/(t_1^{n_1},\dots, t_r^{n_r})$) to the category of
groups is representable by a truncated $F$-algebra $U$.
If the extension $E/F$ is modular (\ie{} for all $i\in\NN$, $E^{p^i}$
and $F$ are linearly disjoint over $E^{p^i}\cap F$), then
$E^{G_t(E/F)}=F$ and $\dim_F(U)=[E:F]^{[E:F]}$. 
In this case, there
is a Galois correspondence between the intermediate fields $F\leq
M\leq E$, s.\,t. $E/M$ is modular and certain closed subgroup schemes of
$G_t(E/F)$, given in the usual way by taking fixed fields respectively
subgroups fixing the given intermediate field. 
Furthermore, he showed that a purely inseparable field extension $E/F$
is modular if and only if there exists a truncated group scheme $\G$
(\ie{} an affine group scheme represented by a truncated $F$-algebra)
which acts on $E/F$, s.\,t. $\Spec(E)$ is a $\G$-torsor.
Given such a group scheme $\G$, then
$G_t(E/F)\isom \G(E\otimes_F -)$ as truncated group schemes over $F$.
However, although the group scheme $G_t(E/F)$ is unique, there might
be several such group schemes $\G$. 

Return now to the case that $E/F$ is a purely inseparable
PPV-extension and $\G:=\uGal(E/F)$. By
Prop. \ref{infinitesimal_group}, $\G$ is infinitesimal and since $K$
is perfect, $K[\G]$ is a truncated $K$-algebra (cf. \cite{demazure},
III, \S 3, Cor. 6.3) and so $F[\G]$ is a truncated $F$-algebra. As
shown in Corollary \ref{spec_is_torsor}, $E$ is a $\G_F$-torsor.\\
By the statements above, we obtain that $E/F$ is a modular field
extension and  that $G_t(E/F)$ equals $\G_F(E\otimes_F
-)$. So we can regain the truncated Galois group scheme $G_t(E/F)$
from our Galois group scheme $\uGal(E/F)$.

However, starting with $G_t(E/F)$, the iterable higher derivation
leads to a natural choice for a group scheme $\G_F\leq G_t(E/F)$ over
which $E$ is a torsor (namely $\uGal(E/F)_F$) and also gives a natural
description of the intermediate fields corresponding to the closed
subgroup schemes of $\G$. For instance, in Example
\ref{exmp_group_schemes},ii)+ iii), $F=K(t,s_1,s_2)$ is the rational
function field in three variables and $E/F$ is a purely inseparable
field extension of degree $p^2$ and exponent $1$. Hence in both
examples, we have the same (abstract) field extension. But in one case the
iterable higher derivation leads to the Galois group scheme
$\uGal(E/F)=\alpha_p\times \alpha_p$ and in the other case to
$\uGal(E/F)=\mu_p\times \mu_p$. Other iterable higher derivations
would also lead to different Galois group schemes. The truncated
Galois group scheme $G_t(E/F)$ only gives a bound on which Galois group
schemes $\uGal(E/F)$ may occur, because every one of them will be a
closed subgroup scheme of $G_t(E/F)$.


\vspace*{.5cm}

\parindent0cm

\end{document}